\documentclass[12pt]{article}
\usepackage{e-jc}
\usepackage{amssymb}
\usepackage{amsthm}
\usepackage{amsmath}
\usepackage{txfonts}

\setcounter{tocdepth}{2}

\usepackage{graphicx}
\usepackage{subfig}
\usepackage{caption}
\usepackage{sidecap}
\usepackage{wrapfig}
\usepackage{float}

\usepackage[colorlinks=true,citecolor=black,linkcolor=black,urlcolor=blue]{hyperref}


\newcommand{\arxiv}[1]{\href{http://arxiv.org/abs/#1}{\texttt{arXiv:#1}}}

\renewcommand{\labelenumi}{{\normalfont (\roman{enumi})}}

\theoremstyle{plain}
\newtheorem{theorem}{Theorem}
\newtheorem{lemma}[theorem]{Lemma}
\newtheorem*{lemma*}{Lemma}
\newtheorem{corollary}[theorem]{Corollary}

\theoremstyle{definition}

\newtheorem{conjecture}[theorem]{Conjecture}

\theoremstyle{remark}

\title{Generalised Knight's Tours}
\author{Nina Kam\v{c}ev\\
University of Cambridge\\
nk370@cam.ac.uk}
\date{23 August 2013}

\begin{document}
    \maketitle
\begin{abstract}
The problem of existence of closed knight's tours in $[n]^d$, where $[n]=\{0, 1, 2, \dots, n-1\}$, was recently solved by Erde, Gol\'{e}nia, and Gol\'{e}nia. They raised the same question for a generalised, $(a, b)$ knight, which is allowed to move along any two axes of $[n]^d$ by $a$ and $b$ unit lengths respectively.

Given an even number $a$, we show that the $[n]^d$ grid admits an $(a, 1)$ knight's tour for sufficiently large even side length $n$.

\end{abstract}

\section{Introduction}
        A \emph{knight's graph} on $[n]^2$ is the graph with the specified vertex set, whose edges are legal knight's moves (two unit lengths in one direction and one in the other). Unconventionally, throughout the paper we take $[n]=\{0, 1, 2, \dots, n-1\}$.
    A Hamiltonian cycle in a knight's graph is called a \emph{knight's tour}.
    
    The existence of a knight's tour in the $8 \times 8$ chessboard is a classical problem. The earliest known solutions originate from Arab chess players from around AD 800. The modern study of the problem appears to have begun in the late 17\textsuperscript{th} century, with \emph{R\'{e}cr\'{e}ations Math\'{e}matiques et Physiques} by Jacques Ozanam. This compilation contains knight's tours by de Montmort, de Moivre and de Mairan. For historical notes and an extensive list of papers published on special cases and related problems, we refer the reader to \cite{jelliss}.
		
    The first characterisation of rectangular chessboards which admit a knight's tour was  given by Schwenk \cite{schwenk} in 1991. 
    
    \begin{theorem}[Schwenk] \label{schwenk}
    Let $1 \leq m \leq n$. The board $[m]\times[n]$ has a closed knight's tour if and only if the following assumptions hold:
    \vspace{-5pt}
    	\vspace{-5pt} \begin{enumerate} \itemsep-1pt
    		\item At least one of $m$ and $n$ is even,
    		\item $m = 3$ or $m \geq 5$,
    		\item if $m =3$, then $n \notin \{4, 6, 8\}$.
    	\end{enumerate}
    \end{theorem}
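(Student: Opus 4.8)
The plan is to prove the two directions separately: a handful of colouring and degree obstructions for necessity, and an inductive ``strip‑gluing'' construction from a finite list of explicit tours for sufficiency.

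\emph{Necessity.} First, the knight's graph on any $[m]\times[n]$ is bipartite, since a knight move changes (row $+$ column) by an odd amount and so flips its parity; a Hamiltonian cycle forces the two chessboard colour classes to have equal size, whence $mn$ is even and at least one of $m,n$ is even, giving (i). Next I would rule out $m\in\{1,2,4\}$. For $m\le 2$ a corner square of $[m]\times[n]$ has degree at most $1$, so it cannot lie on any cycle. For $m=4$, introduce a second $2$‑colouring: let $R$ be the two outer rows (rows $0$ and $3$) and $B$ the two inner rows. A direct check shows every knight move out of an $R$‑square lands in a $B$‑square, and since $|R|=|B|=2n$ a Hamiltonian cycle must alternate $R,B,R,B,\dots$ exactly. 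Superimposing this perfect alternation with the (also perfectly alternating) chessboard colouring forces all $R$‑squares to share a single chessboard colour; but each outer row contains squares of both colours, a contradiction. This gives (ii): $m=3$ or $m\ge 5$. Finally, for $m=3$ and $n\in\{4,6,8\}$ the board is small enough for a direct argument: every corner of $[3]\times[n]$ has degree $2$, so those four pairs of edges are forced, and a knight on the middle row can reach only the top and bottom rows; propagating these constraints forces a proper sub‑cycle, which I would verify by hand (or by a short exhaustive search) for each of the three values of $n$, giving (iii).

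\emph{Sufficiency.} Here I would build tours inductively and reduce to finitely many base boards. The engine is an \emph{expansion lemma}: if $[m]\times[n]$ carries a closed tour containing a prescribed ``docking'' configuration near one short side --- say a pair of parallel edges that can be cut and re‑routed --- and a thin strip $[m]\times[k]$ carries a tour with the matching configuration, then the two merge into a closed tour on $[m]\times[n+k]$ with the same configuration. Natural step sizes are $k=2$ together with one odd step (e.g.\ $k=3$ or $k=5$) so that every admissible length of each parity is reachable. One then exhibits explicit tours on a small base set --- for instance $[5]\times[6]$, $[6]\times[6]$, $[5]\times[8]$, $[7]\times[m]$ for a few small $m$, and $[3]\times[10],\,[3]\times[12],\,[3]\times[14]$ --- chosen so that every pair $(m,n)$ satisfying (i)--(iii) is obtained by repeatedly docking strips onto one of the base boards.

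\emph{Main obstacle.} The colouring arguments for necessity are routine. The delicate part is the sufficiency construction: one must choose the docking gadget so that the expansion lemmas compose cleanly across all parities of $m$ and $n$, and then produce a base list that is simultaneously finite, explicitly checkable, and genuinely exhaustive --- in particular every odd side length $m\ge 5$ must be seeded, and it is this bookkeeping (rather than any single tour) that is easy to get wrong.
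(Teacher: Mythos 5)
This statement is Schwenk's 1991 theorem, which the paper quotes as background with a citation and does not prove, so there is no in-paper argument to compare against; your proposal has to stand on its own. The necessity half does: bipartiteness of the knight's graph gives (i), the degree-at-most-one corner argument disposes of $m\le 2$, and the superposition of the outer-rows/inner-rows colouring with the chessboard colouring is exactly the classical proof that $[4]\times[n]$ has no closed tour, so (ii) is fine. For (iii) you correctly isolate the forced corner edges and the fact that middle-row squares see only the outer rows, but ``propagating these constraints forces a proper sub-cycle, which I would verify by hand or by a short exhaustive search'' is a promissory note, not a proof; for $n=4,6,8$ the case analysis is genuinely needed (and is where Schwenk spends real effort), so as written this clause is a gap, albeit a finite and fillable one.

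The substantive gap is in sufficiency. Your expansion lemma, as stated, glues a strip $[m]\times[k]$ onto one short side and so only grows $n$ with $m$ fixed; with a finite base list this can never reach infinitely many values of $m$, and your own remark that ``every odd side length $m\ge 5$ must be seeded'' concedes the point without resolving it. To make the scheme work you need extension in both directions (or a product/stacking construction), with the docking gadget specified explicitly enough that the cut-and-reroute is verifiable, the parity constraint ``at least one side even'' respected at every gluing step, and the $3\times n$ family treated separately since it can only be grown along $n$. None of the actual content --- the gadget, the explicit base tours, and the check that the induction covers every admissible $(m,n)$ --- is supplied, and that content is essentially the whole of Schwenk's sufficiency proof. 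So the proposal is a correct and standard-looking strategy for necessity plus a plausible but unexecuted plan for sufficiency, not yet a proof of the theorem.
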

    
    The theorem was generalised into higher dimensions in 2012. We only state the special case relevant to this paper.
    \begin{theorem}[Erde, Gol\'{e}nia, Gol\'{e}nia \cite{erde}] \label{egg}
    The grid $[n]^d$, with $d \geq 3$ admits a knight's tour if and only if $n$ is even and $n \geq 4$.
    \end{theorem}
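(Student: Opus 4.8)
I would prove both implications. Necessity is a short parity argument; for sufficiency I would isolate a slightly strengthened statement that lifts from dimension $d$ to dimension $d+1$, so that the whole ``if'' part reduces to a base case in dimension $3$ (and, when $n\ge 6$, to Schwenk's planar Theorem~\ref{schwenk}).

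\emph{Necessity.} Colour a cell $x\in[n]^d$ by the parity of $x_1+\dots+x_d$. A knight's move changes two coordinates, one by $\pm1$ and one by $\pm2$, so it flips this parity; hence the knight's graph is bipartite with these colour classes. A Hamiltonian cycle in a bipartite graph splits its vertices equally between the two sides, but the signed difference of the class sizes is $\sum_{x\in[n]^d}(-1)^{x_1+\dots+x_d}=\bigl(\sum_{k=0}^{n-1}(-1)^k\bigr)^d$, which equals $1$ when $n$ is odd; so $n$ must be even. If $n=2$, no two cells differ by $2$ in a single coordinate, so the knight's graph is edgeless and there is no tour; hence $n\ge 4$.

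\emph{A gluing lemma.} Call a knight's tour of $[n]^d$ \emph{good} if two of its edges are translates of one another by two units along a single coordinate axis. The key step is: if $n\ge4$ is even and $[n]^d$ admits a good tour $C$, then $[n]^{d+1}$ admits one too. Slice $[n]^{d+1}$ into its $n$ layers $L_i=\{x:x_{d+1}=i\}$, each a copy of $[n]^d$, and place in $L_i$ the copy $C_i$ of $C$; write the guaranteed parallel pair of $C$ as $\{u,v\}$ and $\{u+w,v+w\}$, where $w$ has exactly one nonzero entry, equal to $\pm2$. For $i=0,1,\dots,n-2$ in turn, merge the cycle constructed so far with $C_{i+1}$ by deleting the copy of $\{u,v\}$ in $L_i$ and the copy of $\{u+w,v+w\}$ in $L_{i+1}$ and reconnecting along $\{(u,i),(u+w,i+1)\}$ and $\{(v,i),(v+w,i+1)\}$; each of these changes the last coordinate by $1$ and one other coordinate by $2$, hence is a legal knight's move, and the surgery fuses two disjoint cycles into one. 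After the $n-1$ merges, $C_0,\dots,C_{n-1}$ have become a single Hamiltonian cycle of $[n]^{d+1}$; and since $n\ge4$, both $\{(u,0),(u+w,1)\}$ and its translate $\{(u,2),(u+w,3)\}$ survive in it and form a parallel pair along the new axis, so the cycle is again good.

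\emph{Base cases, induction, and the main obstacle.} It remains to exhibit a good tour of $[n]^3$ for every even $n\ge4$. For even $n\ge6$, Schwenk's Theorem~\ref{schwenk} provides a knight's tour of $[n]^2$; after checking that it may be chosen to contain a parallel pair of edges (arranged by a direct construction, or by a local rerouting of a given tour), the gluing lemma with $d=2$ gives a good tour of $[n]^3$. The only exceptional value is $n=4$: here $[4]^2$ has no Hamiltonian cycle at all (a local argument at the corners forbids one), so the $d=2$ step is unavailable and I would instead display an explicit good knight's tour of $[4]^3$. Iterating the gluing lemma on $d$ then yields a good, hence genuine, knight's tour of $[n]^d$ for all $d\ge3$ and all even $n\ge4$, which together with the necessity part proves the theorem. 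I expect almost all of the difficulty to lie in the case $n=4$, where the planar board is degenerate: a tour of $[4]^3$ must genuinely exploit the inter-layer moves, and producing one (carrying a parallel pair to seed the induction) is the single step that cannot be delegated to Schwenk's theorem or to the lemma. The remaining loose end — that a planar tour can always be picked with the required parallel pair — is routine.
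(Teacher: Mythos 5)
You are reviewing a statement the paper itself does not prove: Theorem \ref{egg} is quoted from \cite{erde}, so the comparison is with the original Erde--Gol\'{e}nia--Gol\'{e}nia argument and with the machinery this paper builds for its own generalisation. Your necessity argument is correct, and your gluing lemma is sound as stated: the two cross edges are legal $(2,1)$ moves; the copy of $\{u,v\}$ deleted in layer $L_i$ at step $i$ is a different (indeed vertex-disjoint) edge from the copy of $\{u+w,v+w\}$ deleted there at step $i-1$, so the sequential surgery never removes an edge it still needs; the cross edges are never deleted, so for $n\ge 4$ the pair $\{(u,0),(u+w,1)\}$, $\{(u,2),(u+w,3)\}$ certifies that the lifted tour is again good. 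This is exactly the sequential concatenation with bridges that the paper uses in Lemma \ref{l2} and Theorem \ref{t8}, and it is in essence how the cited proof lifts dimension, so the route is not genuinely different --- it is the standard reduction to planar tours plus one exceptional three-dimensional board.

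The gaps are in the two base ingredients, which is precisely where the content of the theorem lies. First, for $n=4$ everything rests on an explicit closed knight's tour of $[4]^3$ carrying a parallel pair; you correctly identify this as the irreducible step, but you do not supply it, and in \cite{erde} it is a genuine construction, not a formality --- without it the threshold $n\ge 4$ in the ``if and only if'' is unproved. Second, your claim that a planar tour of $[n]^2$ (even $n\ge 6$) ``can always be picked with the required parallel pair'' is not routine in the sense of following from generic counting: a closed tour has $n^2$ edges in four direction classes, but a quarter-density set of edge positions can avoid both displacements $(2,0)$ and $(0,2)$ (for instance all positions whose coordinates are congruent to $0$ or $1$ modulo $4$), so pigeonhole does not force the pair. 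You must either exhibit structured planar tours explicitly --- this is exactly the role the notion of a \emph{structured} tour plays in Section 2 for the $(a,1)$ knight --- or give an actual rerouting argument; Schwenk's theorem as a black box guarantees only a tour, not one with repeated translated edges. Neither repair is difficult, but as written both base cases are asserted rather than proved.
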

    
    The authors of \cite{erde} also asked whether an analogous knight's tour exists if the standard knight is replaced with the generalised, $(a, b)$ knight. The \emph{$(a, b)$ knight} is allowed to move along any two axes of $[n]^d$ by $a$ and $b$ unit lengths respectively. We define the \emph{$(a, b)$ knight's graph} on vertex set $[n]^d$ to be the graph whose edges are legal moves of the $(a, b)$ knight.
		
    We refer to an \emph{$(a, b)$ knight's tour in $[n]^d$} to mean a Hamiltonian cycle in the corresponding knight's graph. Graph theory terminology is used throughout the paper.
        
    The first generalisation of Theorem \ref{egg} is the existence of an $(a, 1)$ knight's tour, for any even  $a$ (for odd $a$ this is plainly impossible since the graph has two connected components: $\{(x_1,\,x_2,\, \dots x_d): \sum_i{x_i} \text{ odd}\}$ and its complement). Our main task is to find a 2-dimensional $(a, 1)$ knight's tour. It turns out that extending to $d$ dimensions is comparatively easy.
		
        The following result is proved in Section 2.
    \begin{theorem} \label{thm1}
    For any even values of $a$ and $n$, with $n \geq a(6a+2)$, there exists an $(a, 1)$ knight's tour in $[n]^d$.
    \end{theorem}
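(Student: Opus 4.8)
The plan is to dispatch the (easy) passage to $d$ dimensions first and then concentrate on building a planar $(a,1)$-knight's tour, which is the real content. For the dimensional step I would use the standard product principle: if $G$ has a Hamiltonian cycle of even order and $H$ has a Hamiltonian path, then $G \mathbin{\square} H$ (Cartesian product) has a Hamiltonian cycle --- run up one $H$-fibre sitting over a fixed vertex of $G$, then sweep back and forth along the remaining $|V(G)|-1$ columns, closing the cycle with the last edge of the cycle of $G$ (this is where evenness of $|V(G)|$ is used). Taking $G$ to be the planar $(a,1)$-knight's graph on the first two coordinates and $H$ the $(a,1)$-knight's graph on the last $d-2$ coordinates, the product is a spanning subgraph of the $d$-dimensional $(a,1)$-knight's graph, and since $n$ is even every graph in sight has even order and a Hamiltonian cycle supplies a Hamiltonian path. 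This gives the induction step $d-2 \to d$; its base cases are $d=2$ (the planar tour) and $d=3$. For $d=3$ I would instead stack $n$ translated copies of a planar tour in the slabs $x_3 = 0,1,\dots,n-1$ and merge consecutive slabs: if the planar tour contains two parallel edges $e$ and $e+(a,0)$ (a feature one can build into it, or locate afterwards since the tour has $n^2$ edges), then for each $k$ one deletes these two edges in the slabs $x_3=k,k+1$ and reconnects across the slab boundary, iterating $n-1$ times to fuse all slabs into one cycle.

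For the planar tour I would use the classical ``tile and stitch'' scheme, in its simplest form a decomposition into horizontal strips. Since $n$ is even and $n \ge a(6a+2)$, write $n = h_1 + \dots + h_k$ with each $h_i$ drawn from a small fixed family of admissible strip-heights (all of size $O(a)$), and partition $[n]^2$ into the strips $S_i = [n]\times[h_i]$; the threshold $a(6a+2)$ is what one needs both so that each strip is long enough for its own construction and so that the residue of $n$ modulo the gcd of the available heights can be absorbed by a bounded number of ``irregular'' strips, the rest being copies of one generic strip. In each strip one exhibits explicitly an $(a,1)$-knight path covering all of its vertices, with the two endpoints placed so that they are at $(a,1)$-distance from the corresponding endpoints in the neighbouring strips; concatenating these paths around the natural cyclic ordering of the strips produces a Hamiltonian cycle of the board (with possibly a few leftover short cycles to be absorbed by the delete-two-parallel-edges / add-two-crossing-edges move). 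Throughout, the bipartite $2$-colouring of $[n]^d$ by the parity of the coordinate sum is preserved by every $(a,1)$-move, which is both the reason $n$ must be even and a consistency check one verifies at each junction.

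The main obstacle is the planar construction, and inside it two points. First, producing for every even $a$ the explicit covering paths in the handful of strip (or tile) shapes, with their endpoints --- and the internal ``parallel pair'' needed for the $d=3$ case --- positioned so that the pieces interlock; this is where essentially all the case analysis lives, and where the constant $6a+2$ really gets pinned down. Second, the bookkeeping: checking that every even $n \ge a(6a+2)$ actually decomposes into available heights with all the endpoint and parity conditions simultaneously satisfiable, and that nothing goes wrong at the outer boundary of the board. Once a planar tour containing one parallel pair is in hand, everything else --- the merge operations and the induction on $d$ --- is routine.
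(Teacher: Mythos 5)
Your dimensional step is sound, and it is genuinely different from the paper's: the paper extends to $d$ dimensions by cutting $[n]^d$ into $2$-dimensional floors and sequentially concatenating their tours along a Hamiltonian path of the $(d-2)$-dimensional grid graph, using a distinguished edge (link B) deliberately built into every planar tour; your Cartesian-product principle handles $d\geq 4$ cleanly, and your $d=3$ slab-merging is exactly the paper's bridge/concatenation device. One caveat there: ``locate afterwards since the tour has $n^2$ edges'' is not justified --- a counting argument gives two tour edges in the same translation class, but not two differing by exactly $(a,0)$, which is what the reconnection needs; so you really must build the parallel (or crossing) pair into the planar tour, and that is precisely the ``structured''/``linked'' bookkeeping the paper carries through its whole induction.

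The genuine gap is the planar construction itself, which is the entire content of the theorem: your strip scheme is asserted, not proved, and as stated it is doubtful. A strip $[n]\times[h]$ with $h\leq a$ is not even connected in the $(a,1)$ knight's graph (only moves $(\pm a,\pm 1)$ are available, and these preserve $x_1$ modulo $a$), and for heights just above $a$ the existence, for \emph{every} even $a$, of Hamiltonian knight's paths with prescribed endpoints in such strips is a problem of essentially the same difficulty as the theorem; nothing in your proposal produces these paths, nor does the threshold $a(6a+2)$ actually emerge from the sketch. The paper does not tile by strips at all: it grows tours by colouring a rim of width $a$ into $a^2$ cycles (giving the step $[n]^2\to[n+2a]^2$), replicates boards by link-A sequential concatenation to reach $[k(6a+2)]^2$ for $k=1,\dots,a$ (covering all residues mod $2a$), and --- the heart --- builds a structured tour of $[6a+2]^2$ by partitioning the board into $a^2$ \emph{levels}, reducing motion within a level to a small grid graph of blocks, and scheduling the level changes by a Hamiltonian cycle (the ``guide'') together with a shading/parity analysis of the lifts. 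The paper even remarks that the resulting cycles are global rather than localised to rectangular domains, and that degree constraints force this; so the step ``in each strip one exhibits explicitly an $(a,1)$-knight path with the endpoints placed so that the pieces interlock'' is the missing proof, not a routine verification to be filled in later.
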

         
        For odd $n$ and even $a$, no knight's tour exists because the $(a, 1)$ knight's graph is bipartite, with two partitions of different size.
        
        There are two main points in which our 2-dimensional $(a, 1)$ knight's tours differ from the ones constructed so far for the $(2, 1)$ knight. Firstly, a natural way to build a Hamiltonian cycle is to assemble it from a sequence of paths with adjacent endpoints, returning to the initial vertex. It requires much less caution and work to just partition the vertices of the graph, find a cycle in each partition, and then gradually merge (or \emph{concatenate}) these into a single Hamiltonian cycle, which is what is done in Section 2.
        
        Secondly, our vertex-partitions produce cycles that extend through the whole board, rather than being localised to square or rectangular domains. This `globality' property is apparent in the resulting $(a, 1)$ knight's tours.
        
        By analogy with the $(a, 1)$ knight, in Section 3 we give a way of extending $(a, b)$ knight's tours in $[n]^2$ (with very little additional structure) to knight's tours in $[n]^d$. Note that $a$ and $b$ are required to be coprime and not both odd if we are to construct an $(a, b)$ knight's tour. We also find $(2, 3)$ and $(2, 5)$ knight's tours in $[n]^2$ for sufficiently large even $n$, and consequently in $[n]^d$. It is much harder to find structure in these graphs. In particular, we use completley different methods for the $(2, 5)$ knight's tour (Section 3.3). We do not know what happens for general $(a, b)$ knights.
				
			In Section 2, definitions and results are mostly stated in terms of coordinates, but we use diagrams to illustrate them and encourage the reader to think in terms of diagrams.Throughout Section 2, $C_{ij}$ means $C_{i, j}$ and similar. We omit commas in subscripts to make the text and diagrams more readable unless there is potential ambiguity. All numerical values are one-digit.

\section{The \texorpdfstring{$(a, 1)$}{(a, 1)} knight}
	The construction of the $(a, 1)$ knight's tour in $[n]^2$ comprises most of this section. We first find two ways of extending existing knight's tours, which will allow us to induct on the side length $n$. Then we turn to the basic case, $n=6a+2$.
	\subsection{Extending 2-dimensional \texorpdfstring{$(a, 1)$}{a, 1} knight's tours}
				Concatenation of cycles is a basic principle for building longer cycles from the given, smaller ones.
			\begin{center}
				\includegraphics[scale=0.7]{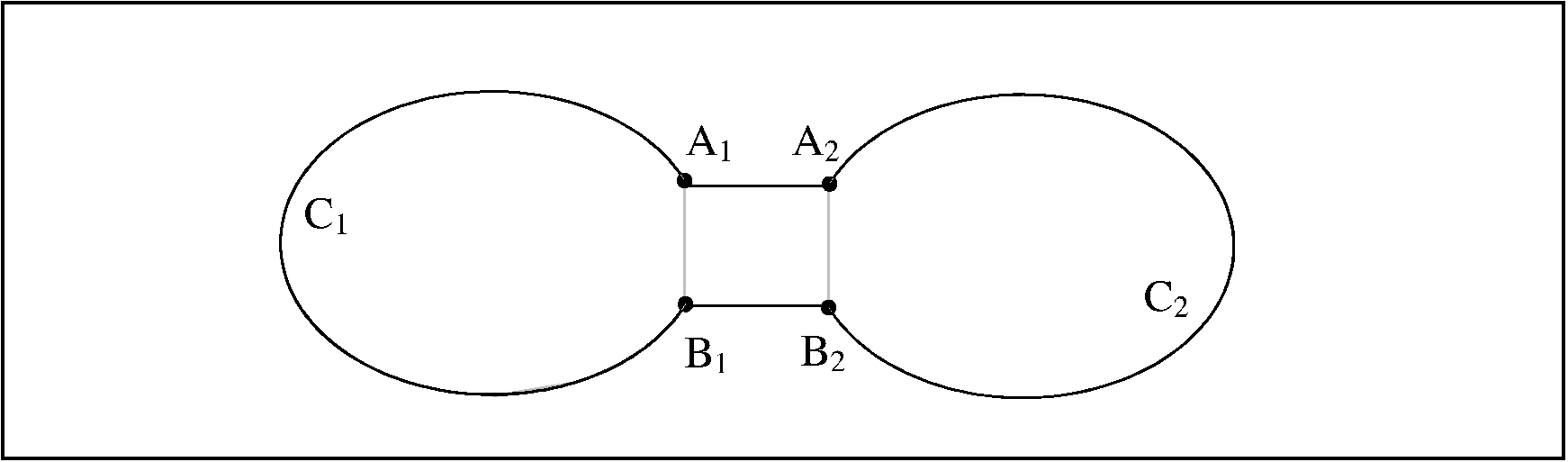}
			\end{center}
				
				Consider a graph $G$ containing vertex-disjoint cycles $C_1$ and $C_2$. Let $C_1$ and $C_2$ contain edges $A_1 B_1$ and $A_2 B_2$ respectively. Suppose $A_1$ and $A_2$ are adjacent, and $B_1$ and $B_2$ are adjacent in $G$. Then the following is a cycle containing the union of vertices of $C_1$ and $C_2$:
$$A_1  \stackrel{C_1}{\longrightarrow} B_1 - B_2   \stackrel{C_2}{\longrightarrow} A_2.$$
				We call this the \emph{compound} of cycles $C_1$ and $C_2$ and denote it by $C = C_1 \cup C_2$. The edges $A_1 B_1$ and $A_2 B_2$ will be called a \emph{bridge} for this compound.
			  
		\subsubsection{Traversing the rim of the board}
		
				Consider an $(a, 1)$ knight's graph with vertex set $[n]^2$, where $a$ and $n$ are even. In our diagrams, $(0, 0)$ is the bottom left square, so that the coordinatisation is in accordance with the first quadrant of the Cartesian coordinate system. We call the subgraph induced by $M = \{ (i, j) \mid a \leq i \leq n-a-1,\, a \leq j \leq n-a-1 \}$ the \emph{middle} of the board. The subgraph induced by the complement of $M$ is called the \emph{rim} of width $a$.
				
				A \emph{unit} is defined as a set of form $\{(i,\, j),\, (i+1,\, j),\, (i,\, j+1),\,(i+1,\, j+1)\}$, where $i$ and $j$ are even, as shown in Figure \ref{g2}. Two units $U$ and $U^\prime$ are \emph{equivalent} if $U$ can be translated by $(ka, la)$ ($k$ and $l$ integers) to cover $U^\prime$. For example, the highlighted units in Figure \ref{g2} are an equivalence class of units in the rim of width 4 of our (4, 1) knight's graph.
								
				We $\emph{colour}$ the vertices of the rim using $a^2$ colours, as displayed in Figure \ref{g2}. The set \emph{coloured} by $i, j$ is uniquely defined by the requirement that its vertices lie within equivalent units. Each colour $i, j$ induces a cycle $C_{ij}$ in the rim of width $a$.
							
			\begin{figure}[!ht]
			\centering
			\includegraphics[scale=0.8]{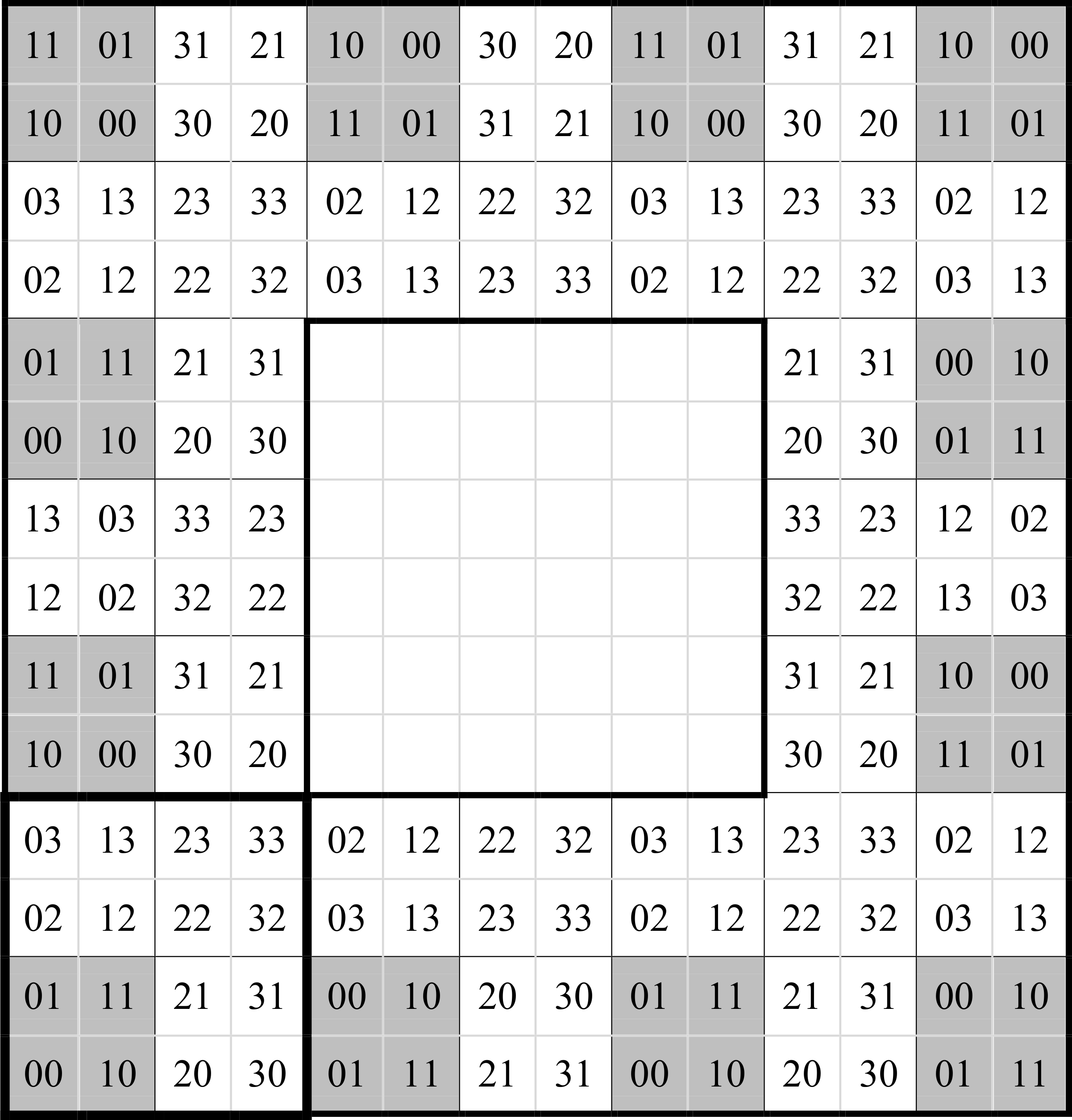}
			\caption{Colours defined by the $(4, 1)$ knight in the rim of width 4 of $[14]\times [14]$. Each $4 \times 4$ square contains exactly one vertex of each colour.}  \label{g2}
		\end{figure}
		
		To explain why colours indeed induce closed paths $C_{ij}$ for any even $n$, it is sufficient to look at $C_{00},\, C_{01},\, C_{10},\, C_{11}$, and track their position within the highlighted (gray) units. The other cycles will behave analogously.
		
		The vertices belonging to $C_{00}$ and $C_{01}$ alternate vertically in bottom and top segment of the rim. The number of these transpositions is the same at the bottom and at the top, which means that $C_{00}$ and $C_{01}$ end up in the same relative position after a single lap ($C_{00}$ below $C_{01}$). Similarly, vertices belonging to $C_{00}$ and $C_{10}$ alternate horizontally in right and left segment of the rim, ending up at the same relative position. So cycle $C_{00}$ indeed returns to the bottom left corner.
		 
		Equivalently, $C_{01}$ alternates with $C_{00}$ vertically and with $C_{11}$ horizontally to return to the top left corner of its unit. The same argument works for $C_{10}$ and $C_{11}$, so the claim is proved.

				The partition of the rim into colours is not a matter of choice - most vertices along the edge of the board have degree 2 in the rim, as well as most vertices along the middle. These constraints dictate the 'propagation' of cycles.
			 	
				To be able to induct on the side length, we prove a slightly stronger claim than mere existence of a knight's tour. To formulate the hypothesis, we define a \emph{structured} $(a,1)$ knight's tour as a knight's tour that contains edges
				\vspace{-6pt}
				\begin{itemize} \itemsep-1pt
					\item $\{(i, j), (i+a, j+1) \}$,   where $i$ and $j$ lie between $0$ and $a-1$, and $j$ takes only even values,
					\item $\{(i, j), (i+a, j-1) \}$,   where $i$ and $j$ lie between $0$ and $a-1$, and $j$ takes only odd values.
				\end{itemize}			  
			\begin{figure}[!ht]
				\centering
				\includegraphics[width=0.7\textwidth]{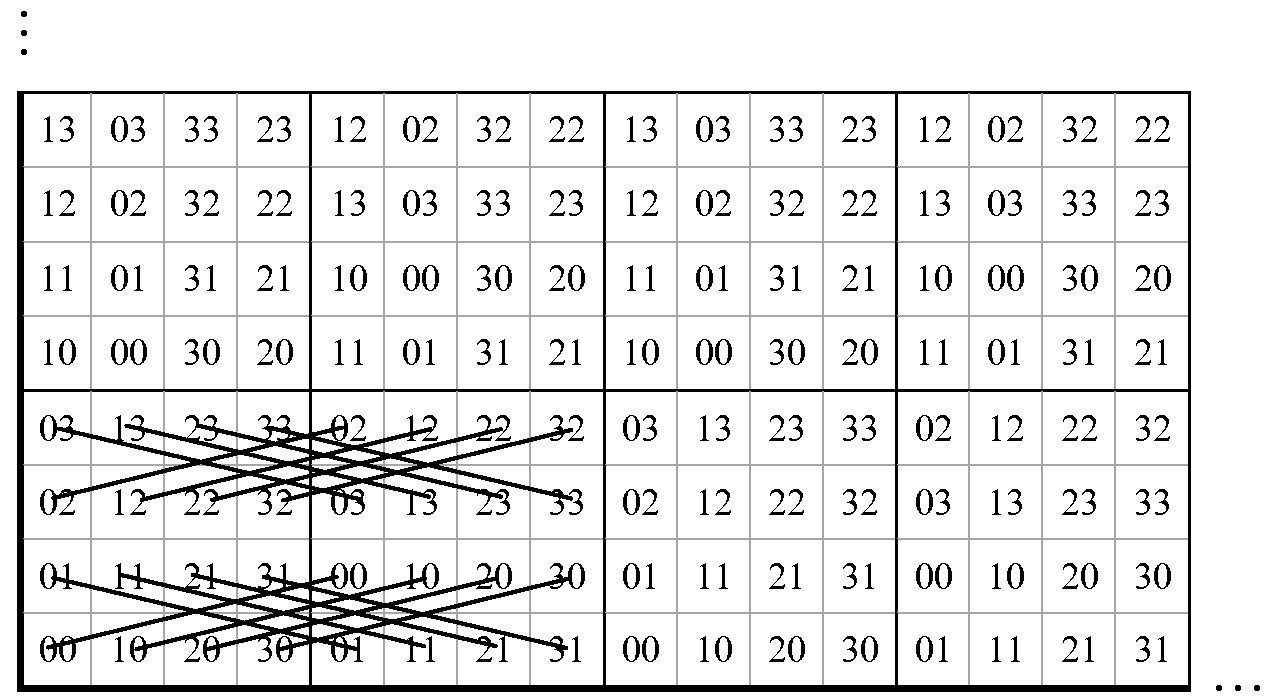}
				\caption{Bottom left corner of a $(4, 1)$ knight's graph, with edges required for a structured knight's tour highlighted.}  \label{g3a}
			\end{figure}
				We shall contract the two cases as $\{(i, j), (i+a, j+(-1)^j)\}$. Each of these edges (shown in Figure \ref{g3a}) is contained in the corresponding cycle $C_{ij}$ so this requirement is not very strong.
			
			\begin{lemma} \label{l1} 	
 				Let a structured $(a, 1)$ knight's tour in $[n]^2$ be given, with $a$ and $n$ even and $n \geq 2a$. Then there exists a structured $(a, 1)$ knight's tour in $[n+2a]^2$.
 			\end{lemma}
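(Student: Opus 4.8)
\medskip

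\noindent\textbf{Proof plan.} The plan is to insert the given tour into the \emph{middle} of the larger board and then wrap the rim around it. Write $[n+2a]^2$ as the disjoint union of its middle $M=\{(x,y): a\le x,y\le n+a-1\}$ --- the translate of $[n]^2$ by $(a,a)$ --- and its rim of width $a$. Since adjacency in the $(a,1)$ knight's graph depends only on coordinate differences, $M$ is an induced copy of the knight's graph on $[n]^2$, so translating the given structured tour $T$ by $(a,a)$ produces a cycle $T_M$ spanning exactly the vertices of $M$; the condition $n\ge 2a$ is precisely what lets ``structured'' make sense for $T$, since its defining edges reach coordinate $2a-1$. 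By the discussion preceding the lemma, the rim of $[n+2a]^2$ is partitioned by the $a^2$ colours into vertex-disjoint cycles $C_{ij}$, $0\le i,j\le a-1$, and each $C_{ij}$ contains the edge $e_{ij}=\{(i,j),(i+a,j+(-1)^j)\}$; these $a^2$ edges lie in the bottom strip of the rim, next to the board corner $(0,0)$, and they are exactly the edges a structured tour of $[n+2a]^2$ is required to contain. So $[n+2a]^2$ is now partitioned into the $a^2+1$ pairwise vertex-disjoint cycles $T_M$ and $C_{ij}$, and it suffices to merge them into one cycle by iterated compounds, never using any $e_{ij}$ as a bridge-edge.

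I would carry out the merge in two stages. Stage one merges the $a^2$ rim cycles into a single cycle $C_R$ covering the rim. The basic move is to translate a knight-edge $A_1B_1$ of one cycle by a knight move $t$: then $(A_1{+}t)(B_1{+}t)$ is again a knight-edge, and if it lies on a different cycle the two edges form a bridge, so one compound merges the cycles. One then shows there are enough such bridges to connect all $a^2$ colours --- they occur most plentifully near the four corners of the rim, where cycles of different colours interleave densely, whereas inside a long straight piece of a width-$a$ band the relevant vertices are too close together for a knight-move, so few bridges appear there. Formally, the auxiliary graph on the colour set, joining two colours whenever their cycles admit a bridge that avoids every $e_{k\ell}$, is connected; one fixes a spanning tree of bridges with all its bridge-edges distinct and disjoint from every $e_{k\ell}$, and carries out the corresponding compounds in any order. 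Stage two attaches $T_M$ with one more compound, and here being \emph{structured} pays off: $T_M$ contains the translated edge $\{(a,a),(2a,a+1)\}$, whose endpoints have rim-neighbours $(a+1,0)$ and $(2a+1,1)$; since $(a+1,0)$ has rim-degree $2$, both of its rim-edges --- one of which is $\{(a+1,0),(2a+1,1)\}$ --- belong to its colour-cycle, hence to $C_R$, and that edge is none of the $e_{k\ell}$ (its coordinates are too large), so the pair $\{(a,a),(2a,a+1)\}$, $\{(a+1,0),(2a+1,1)\}$ is a legal bridge.

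The Hamiltonian cycle of $[n+2a]^2$ produced this way still contains every $e_{ij}$ with $0\le i,j\le a-1$, because all bridges were chosen to avoid them, so it is a structured $(a,1)$ knight's tour. Since $n+2a\ge 2a$ as well, the lemma can be iterated, which is what the induction in the proof of Theorem~\ref{thm1} will need.

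The step I expect to be the real obstacle is stage one of the merge: exhibiting, uniformly in $a$, a family of bridges inside the rim that links all $a^2$ colour-cycles yet misses the $a^2$ prescribed edges $e_{ij}$. Room is never the issue --- only $a^2$ edges are forbidden while each colour-cycle has length of order $n/a$ --- but one has to locate the bridges concretely, and because knight-moves overshoot a width-$a$ band they cluster near the rim's corners rather than along its sides; the cleanest way to describe and check the resulting merge pattern is with the kind of diagram the paper already uses for the rim. Attaching $T_M$ in stage two is, by comparison, essentially free once ``structured'' is available.
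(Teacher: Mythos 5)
Your setup matches the paper's --- translate the given tour into the middle $M$, form the $a^2$ colour cycles $C_{ij}$ in the rim, and merge by compounds --- and your stage-two bridge is checked correctly. But your stage one is a genuine gap, and you flag it yourself: you never exhibit the intra-rim bridges that are supposed to link all $a^2$ colour cycles into a single cycle $C_R$ while avoiding every prescribed edge $e_{k\ell}$. ``They occur most plentifully near the corners'' and ``the auxiliary graph on colours is connected'' are exactly the assertions that would have to be proved, uniformly in $a$, and this is not a routine verification: inside a straight band of width $a$ a knight move overshoots the band, so the candidate bridges live only in the corner regions, and showing that the bridges available there connect all $a^2$ colours (and can be chosen edge-disjoint and disjoint from the $e_{k\ell}$) would need an analysis of the corner propagation comparable in length to the rest of the argument. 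As it stands the proposal reduces the lemma to an unproved combinatorial claim about the rim.

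The reason the paper does not face this obstacle is that it uses the full strength of the hypothesis, which your argument underuses. A \emph{structured} tour contains not one but $a^2$ edges $\{(i,j),(i+a,j+(-1)^j)\}$, $i,j\in[a]$; after translation into $M$ these become $a^2$ edges of the middle tour $C$, and each of them pairs with the rim edge $\{(i+a+1,j),(i+2a+1,j+(-1)^j)\}$ of the cycle $C_{i+1,j}$ (indices mod $a$) to form a bridge, the connecting edges being knight moves of type $(-1,a)$. So every rim cycle is concatenated \emph{directly} to $C$, one bridge per colour, and no rim-to-rim bridges are needed at all; the $a^2$ edges $\{(i,j),(i+a,j+(-1)^j)\}$ near the corner of the big board are untouched by these compounds, so the resulting tour is again structured. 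You used only the single edge $e_{00}$ of $T_M$ and thereby pushed all the difficulty into the rim, where it does not dissolve; using all $a^2$ structured edges is precisely what makes the merge immediate, and is why ``structured'' is defined with $a^2$ edges in the first place.
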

 			\begin{proof}
				The middle of the board $[n+2a]^2$ is isomorphic to the $(a, 1)$ knight's graph in $[n]^2$ in a natural way, so we can construct a structured $(a, 1)$ knight's tour $C$ in $\{a, a+1, \dots \ n+a-1\}^2$. By definition, $C$ contains the edges $\{(i+a, j+a), (i+2a, j+(-1)^j+a) \}$, for all $i$ and $j$ in $[a]$. These edges, for $j \in \{0, 1\}$ are represented by solid black edges in Figure \ref{g3}.
 				
				Also form colours $C_{00}$ to $C_{a-1,a-1}$ in the rim of width $a$. These cycles contain the edges $\{(i+a+1, j), (i+2a+1, j+(-1)^j) \}$ for $i$ and $j$ in $[a]$ (represented by dashed black edges in Figure \ref{g3} for $j \in \{0, 1\}$).
				
 				As explained in the previous section, for all $i$ and $j$ in $[a]$, $\{(i+a+1, j), (i+2a+1, j+(-1)^j) \}$ and $\{(i+a, j+a), (i+2a, j+(-1)^j+a) \}$ is a bridge for the compound of $C_{i+1, j}$ and $C$ (we take $C_{a, j}= C_{0, j}$). So by replacing this pair with $\{(i+a+1, j), (i+a, j+a) \}$ and $\{(i+2a+1, j+(-1)^j), (i+2a, j+(-1)^j+a) \}$ (gray edges in Figure \ref{g3}), we get the compound
$$\left( \bigcup_{i, j \, \in [a]} C_{ij} \right) \cup C. $$
			
			\begin{figure}[!ht]
				\centering
				\includegraphics[width=0.5 \textwidth]{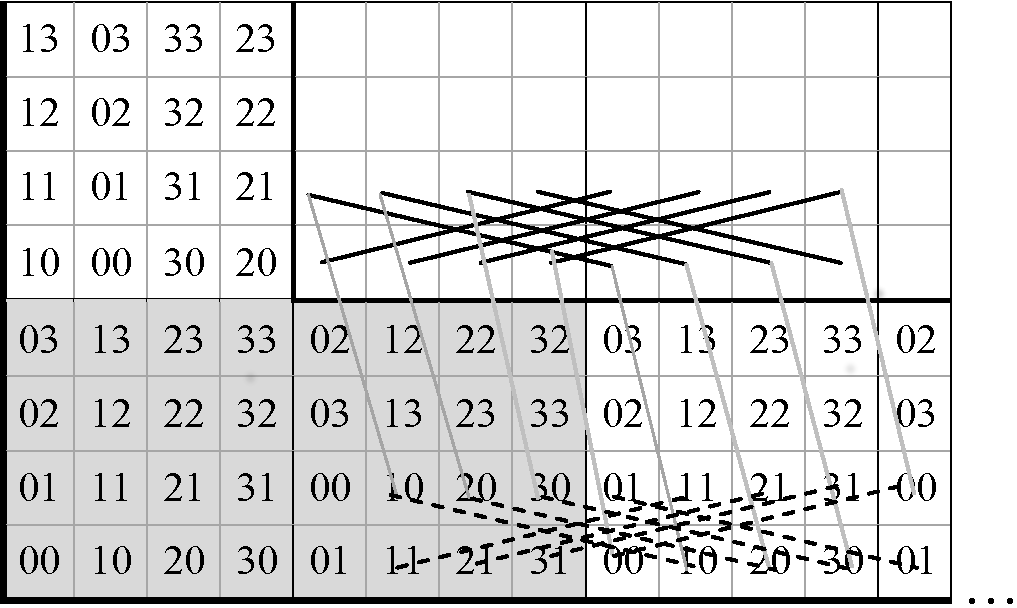}
				\caption{Bottom left corner of the $(4, 1)$ knight's graph on $[n+2a]^2$. The black edges are bridges between the middle (cycle $C$) and the rim (cycles $C_{ij}$ with $i=0, \dots a-1$ and  $j=0, 1$). They are replaced by gray edges to form our new $(a, 1)$ knight's tour.}  \label{g3}
			\end{figure}

This notation makes sense because once the cycles are concatenated, the order in which it has been done is not distinguishable. The result is an $(a, 1)$ knight's tour in $[n+2a]^2$. Compare this new knight's tour with the definition of a structured tour. Edges connecting the two highlighted $4 \times 4$ squares are not among the ones which have been deleted to do the concatenation in Figure \ref{g3}. This means exactly that the $(a, 1)$ knight's tour we constructed is structured, which completes the proof. 
 			\end{proof}
 			
 		\subsubsection{Sequential concatenation of \texorpdfstring{$(a, 1)$}{a, 1} knight's tours}
			Given a collection of vertex-disjoint cycles $\{C_i\}$ in a knight's graph, the most straightforward way to concatenate all of them into a single cycle is to order them in a sequence, and then concatenate $C_i$ and $C_{i+1}$ for each $i$. In addition, Lemma \ref{l2} gives a strategy of concatenating $C_i$ and $C_{i+1}$ in a way that uses only one link, which will be used in many seemingly different constructions. This is a crucial idea, and we will refer to it as \emph{sequential concatenation}.
			
			We call the edge $\{(a-1, 1), (a-2, a+1)\}$ \emph{link A} and refer to an $(a, 1)$ knight's tour containing it as an \emph{A-linked} knight's tour.
			
			\begin{lemma} \label{l2}
				Let $[n]^2$ admit a structured $(a, 1)$ knight's tour and an A-linked $(a, 1)$ knight's tour. Then for any natural number $k$, there is a structured $(a, 1)$ knight's tour in $[kn]^2$.
			\end{lemma}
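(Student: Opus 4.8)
Here is my plan for proving Lemma \ref{l2}.

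\medskip

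\textbf{Setup.} Let $C'$ be a structured $(a,1)$ knight's tour in $[n]^2$ and let $C''$ be an $A$-linked $(a,1)$ knight's tour in $[n]^2$. The idea is to tile $[kn]^2$ by a $k \times k$ array of blocks $B_{pq} = \{pn, pn+1, \dots, pn+n-1\} \times \{qn, \dots, qn+n-1\}$ for $p,q \in [k]$, each of which is an isometric copy of $[n]^2$. In each block place a copy of a knight's tour in $[n]^2$ (either $C'$ or $C''$, to be decided), obtaining $k^2$ vertex-disjoint cycles. We then order the blocks into a single snaking sequence $B_{(1)}, B_{(2)}, \dots, B_{(k^2)}$ in which consecutive blocks are always side-adjacent (the usual boustrophedon path through the grid of blocks: go right along the bottom row, up one, left along the next row, and so on), and concatenate the cycle in $B_{(t)}$ with the cycle in $B_{(t+1)}$ for each $t$, using a bridge that lies across the shared boundary of the two blocks. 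The first step is to verify that a structured tour, and also an $A$-linked tour, always supplies a suitable pair of parallel edges near each of its four sides so that two adjacent copies can be bridged; this is where link $A$ and the structured edges are used, since by hypothesis these are exactly the edges we are guaranteed to find.

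\medskip

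\textbf{Choosing orientations and locating bridges.} For two horizontally adjacent blocks, the shared boundary is a vertical line $x = pn$. A bridge across it needs two edges of the form $\{(pn-1,y),(pn,y')\}$ roughly (more precisely, an edge of the left tour ending near $x=pn-1$ whose endpoints are knight-adjacent to the endpoints of an edge of the right tour beginning near $x=0$ in its own coordinates). The structured edges $\{(i,j),(i+a,j+(-1)^j)\}$ with $i,j \in [a]$ live in the bottom-left $a \times a$ corner of a copy, and link $A$, the edge $\{(a-1,1),(a-2,a+1)\}$, also lives near the bottom-left corner; by reflecting/rotating the copy placed in a block we can move these guaranteed edges to whichever corner is convenient. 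So the plan is: pick a boustrophedon ordering of the blocks, and for each $t$ choose the isometry of the copy in $B_{(t)}$ so that the structured/$A$-linked edge it carries is positioned at the corner through which the path enters from $B_{(t-1)}$, and another guaranteed edge is at the corner through which it exits to $B_{(t+1)}$. One must check that a single copy can simultaneously present the required edges at its "in" corner and its "out" corner for every position in the snake — i.e. that between a structured tour and an $A$-linked tour we have enough edges, in enough corners, to realise every (in-corner, out-corner) pair that occurs along the boustrophedon. This bookkeeping, and showing the chosen bridges are genuinely disjoint from each other (so the concatenations do not interfere), is the main obstacle; it is finite and local, but needs care.

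\medskip

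\textbf{Finishing and preserving structure.} After all $k^2 - 1$ concatenations along the snake we obtain a single Hamiltonian cycle of the $(a,1)$ knight's graph on $[kn]^2$, i.e.\ an $(a,1)$ knight's tour. It remains to arrange that this tour is itself \emph{structured}: the structured edges required for $[kn]^2$ all lie in the bottom-left $a \times a$ corner, hence inside the first block $B_{(1)}$. So the plan is to put a structured copy of $C'$ into $B_{(1)}$, oriented in the identity position, and to arrange the boustrophedon so that $B_{(1)}$ is an endpoint of the snake and is joined to its neighbour by a bridge that avoids the bottom-left $a \times a$ corner. Then none of the deleted edges are structured edges of $B_{(1)}$, so all $a^2$ structured edges survive into the final cycle, exactly as in the concluding paragraph of the proof of Lemma \ref{l1}. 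This yields a structured $(a,1)$ knight's tour in $[kn]^2$, completing the proof.
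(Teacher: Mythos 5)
Your overall architecture is the same as the paper's: tile $[kn]^2$ by $k^2$ copies of $[n]^2$, order them along a snaking Hamiltonian path of the $k\times k$ grid of blocks, sequentially concatenate across shared boundaries using link A (suitably flipped/rotated), and put the structured copy at the start of the snake so that the $a^2$ structured edges, all lying in its bottom-left corner, survive untouched. However, you leave open precisely the point on which the proof turns, and the way you frame it would actually break down. You ask that each copy ``simultaneously present the required edges at its `in' corner and its `out' corner,'' drawing both from the guaranteed special edges (structured edges or link A). But the hypotheses give you only \emph{one} guaranteed special edge per A-linked copy, namely link A itself, and the structured edges of $C'$ are all clustered at a single corner; an interior block of the snake needs a bridge edge on its entering side \emph{and} on its exiting side, so your bookkeeping cannot be closed with the edges you have listed, and you explicitly defer it as ``the main obstacle.''

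The missing idea, which is the paper's key observation, is that the exit edge costs nothing: the bridge between consecutive blocks consists of link A of the block being \emph{entered} together with a corner edge of the block being \emph{exited}, e.g. $\{(n-1,0),(n-2,a)\}$ or a symmetric image. A corner vertex of $[n]^2$ has degree $2$ in the $(a,1)$ knight's graph, so both of its incident edges lie in \emph{every} Hamiltonian cycle of that copy -- no assumption is needed to guarantee them, only that the corner edge has not already been deleted, which is clear since each such corner is used at most once along the snake. With this, each block's link A is consumed exactly once (when the block is entered), the start block is never entered and so needs no link A and may carry the structured tour, and the deleted corner edge of the start block is far from its bottom-left $a\times a$ region, so the structure is inherited. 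Supplying this degree-$2$ corner argument (and noting that flips/rotations place link A compatibly with both horizontal and vertical adjacencies of the snake) closes the gap and completes the proof along the lines you propose.
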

			\begin{proof}
				The hypothesis of this lemma implies that $a$ and $n$ are both even. Consider first just the board $[2n] \times [n]$, viewed as two copies of $[n]^2$. Endow each copy of $[n^2]$ with an $(a, 1)$ knight's tour. The right copy, $\{n, n+1, \dots 2n-1\} \times [n]$ contains link A by assumption, which is given by $\{(n+a-1, 1), (n+a-2, a+1)\}$ in the new common coordinate system. Then the edges \\ $\{(n+a-1, 1), (n+a-2, a+1)\}$ (link A) and \\ $\{(n-1, 0), (n-2, a)\}$ (which must exist as a corner edge) make up a bridge. We use this bridge to construct a single $(a, 1)$ knight's tour $[2n] \times [n]$.
			Represent this by an arrow pointing into the right copy, indicating that its link A has been 'used up' for the concatenation:
			\begin{center}
				\includegraphics[scale=0.8]{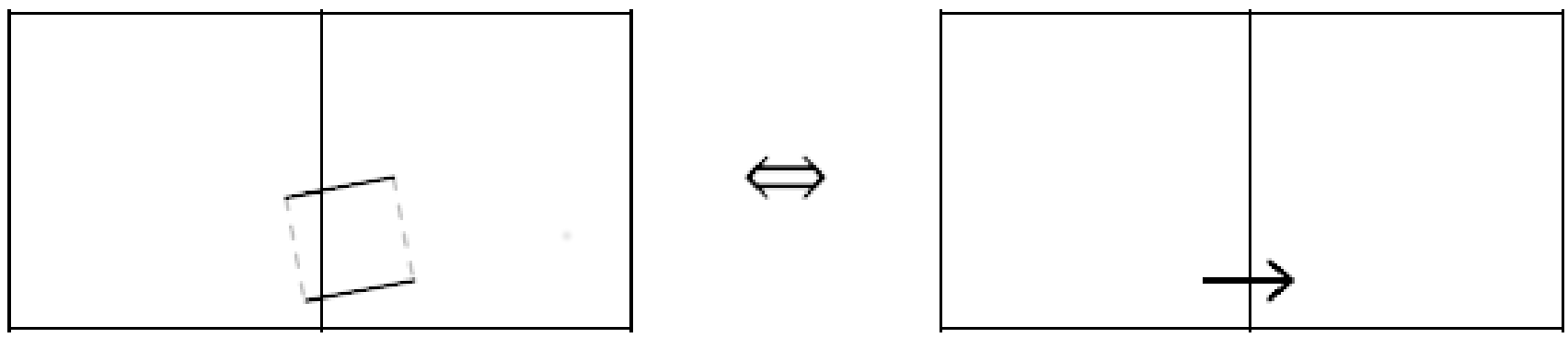}
			\end{center}
			
				Now extend this concept to $[kn]^2$, divided into $k^2$ copies of $[n]^2$, called \emph{elementary boards}. Regard the elementary boards as vertices of a $k \times k$ grid graph. In this grid graph, two vertices are adjacent whenever they share a side. The $k \times k$ grid graph admits a straightforward Hamiltonian path $P$ along its rows, which we indicate by arrows (Fig.~\ref{g9}).				
				\begin{figure}[!htb]
					\centering \includegraphics[scale=0.4]{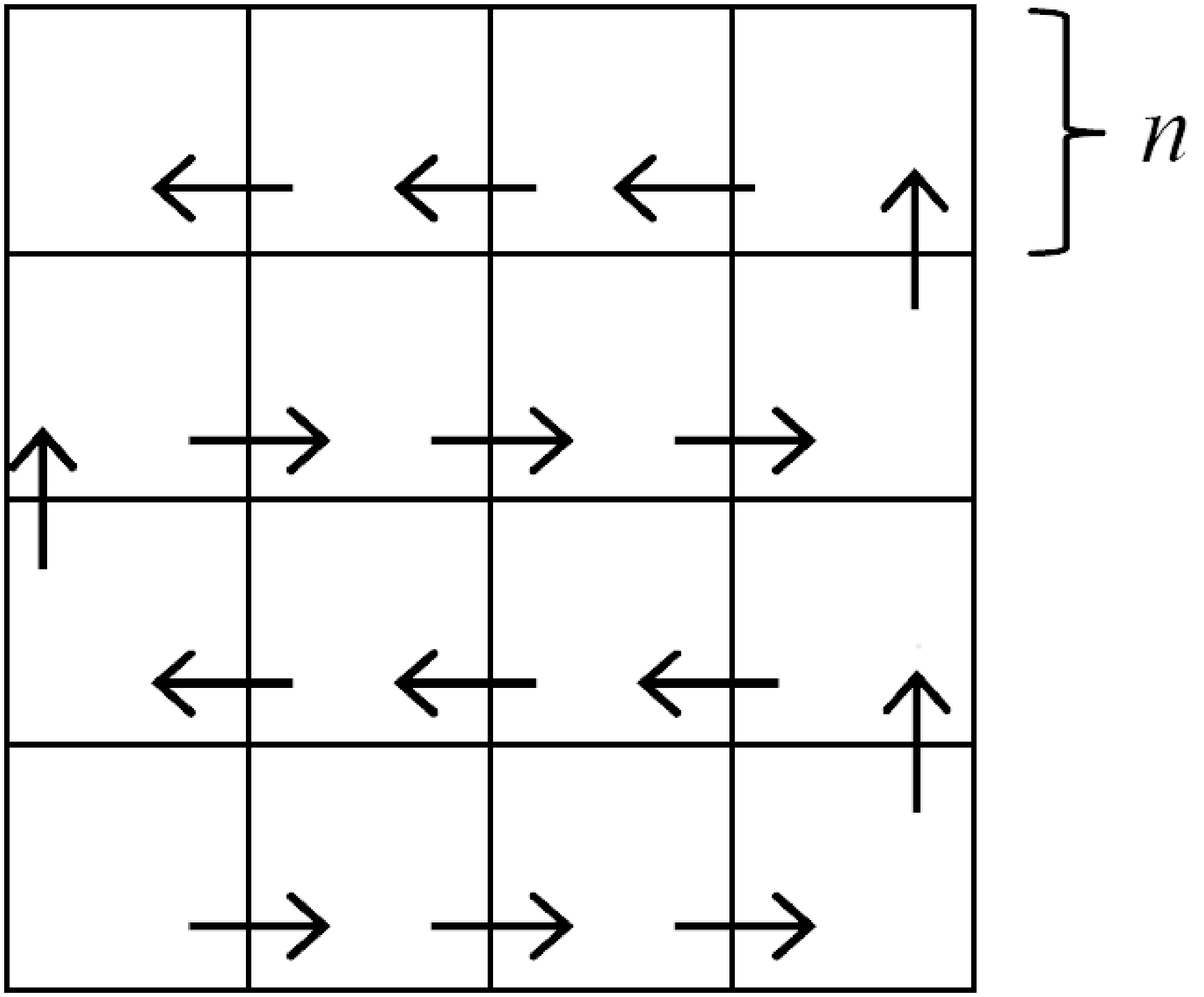}
					\caption{Grid graph whose vertices are elementary boards (for $k=4$). The arrows indicate the Hamiltonian path $P$ used for sequential concatenation.} \label{g9}
				\end{figure}
				By assumption, each elementary board admits an A-linked knight's tour. Flip and rotate these tours so that each link A is positioned in accordance with the corresponding arrow. Now we can perform concatenations dictated by the arrows, one by one, as in Figure \ref{g9}.
				
				Link A of the $(a, 1)$ knight's tour in each elementary board is used up exactly once. The exception is the bottom left one, $[n]^2$, for which we haven't made any assumptions. We take the initial $(a, 1)$ knight's tour in this board to be structured, so that this structure is inherited by the new tour in $[kn]^2$.
			\end{proof}

			\newpage
				It will turn out that Lemma \ref{l1} and Lemma \ref{l2} are the key to extending a \emph{suitable} basic case to a knight's tour in any sufficiently large board.
						
	\subsection{Basic case}
			We will build all our $(a, 1)$ knight's tours from a structured $(a, 1)$ knight's tour in $[6a+2]^2$. The construction is fairly complex. Namely, it is somewhat surprising that such an $(a, 1)$ knight's tour can be built for general (even) $a$.

			We now partition the vertices of the entire board, as we partitoned the rim in the previous section. We extend the definition of a \emph{unit} (a set of form $\{(i,\, j),\, (i+1,\, j),\, (i,\, j+1),\,(i+1,\, j+1)\}$, where $i$ and $j$ are even) to the entire vertex set $[6a+2]^2$. Recall, two units $U$ and $U^\prime$ are \emph{equivalent} if $U$ can be translated by $(ka, la)$ ($k$ and $l$ integers) to cover $U^\prime$.	The set of vertices numbered by $i, j$ will now be called a \emph{level} and denoted by $L_{ij}$. We construct the level $L_{ij}$ as follows (see Figure \ref{g4}).
			
			\begin{figure}[!ht]
				\centering
				\includegraphics[scale=0.74]{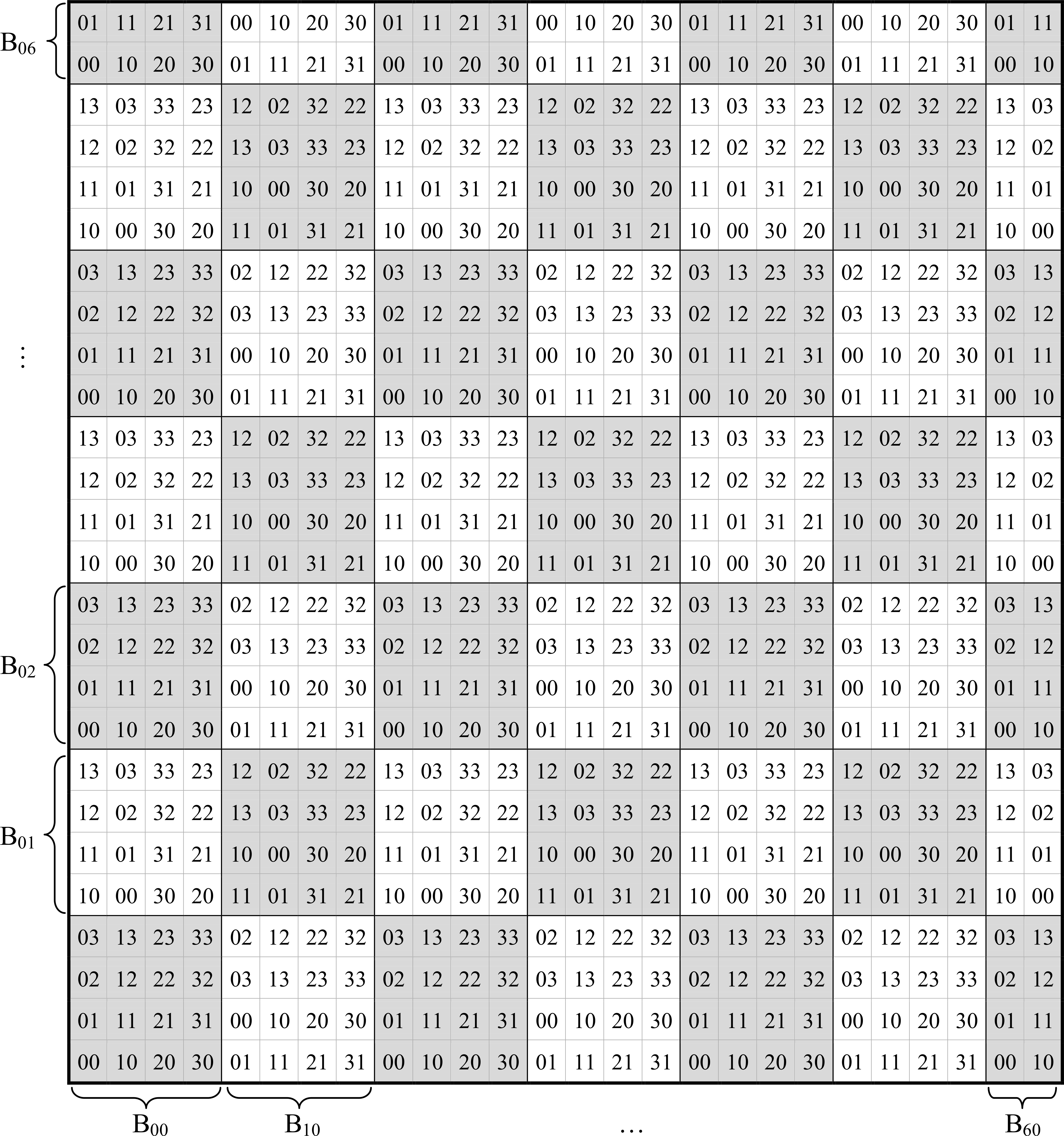}
				\caption{The levels on $[6a+2]^2$ (here $a=4$). Division into blocks is indicated by gray shading.
} \label{g4}
			\end{figure}

			\vspace{-5pt} \begin{enumerate} \itemsep-1pt
				\item $L_{ij}$ contains the vertex $(i, j)$.
				\item Consider the subgraph of our knight's graph induced by the unit containing $(i, j)$ and all equivalent units. $L_{ij}$ is the connected component of this subgraph determined by (i).
			\end{enumerate}
\vspace{-5pt}
			We refer to two levels $L$ and $L ^{\prime}$ as \emph{adjacent} if there are vertices belonging to $L$ and $L^\prime$ respectively that are related by a single knight's move.
In particular, if the knight's move that relates them is $(-1, \pm a)$ or $(1, \pm a)$ then we say $L$ and $L^\prime$ are \emph{vertically adjacent}. Otherwise we say that they are \emph{horizontally adjacent}. Refer to Figure \ref{g4} to make sure these concepts are well-defined. Lemma \ref{l2a} shows that each level is adjacent to four others - two horizontally and two vertically.

			Each set of vertices $B_{ij}=\{(x, y) \mid ia \leq x \leq (i+1)a-1,\,\, ja \leq y \leq (j+1)a-1\}$ is called a \emph{block} ($i$ and $j$ range from 0 to 6). We refer to blocks $B_{i,6}$ and $B_{6,j}$ along the top and right edge of the board as \emph{incomplete} blocks. This allows us to specify the knight's position using ordered pairs $(B_{i_1, j_1}, L_{i_2, j_2})$ - it is essentially a new coordinatisation of $[6a+2]^2$.
			
			The moves of an $(a, 1)$ knight constrained to one level are now reduced to just horizontal and vertical steps from one block to its neighbour (neighbours are blocks which share a side). This means that we can view each level separately as a 2-dimensional grid graph. Keeping in mind that movement within each level is simple, our strategy is to cover each level entirely before switching (or \emph{lifting}) to the next one. But reaching each level exactly once and returning to the starting point is yet another Hamiltonian-cycle problem. Hence we view our levels as vertices of a new graph. Following its Hamiltonian cycle will enable us to lift between levels.
			
			\subsubsection{The guide}
				 
					Let $G$ be a graph with vertices denoted by $L_{ij}$, where $i$ and $j$ range from $0$ to $a-1$ and $a$ is even. Two vertices of this graph are adjacent if the corresponding levels are adjacent. We read off the adjacent vertices in this graph from Figure \ref{g4} and its analogue, Figure \ref{g4n_72}. Lemma \ref{l2a} is followed by a restatement which you might find easier to picture.
					
				\begin{lemma} \label{l2a} \vspace{-3pt}
					Let $G$ be the graph defined above, for some even $a$. The edges of $G$ are as follows. 
					\vspace{-5pt} \begin{enumerate} \itemsep-1pt
						\item If $i \geq 2$, then $L_{ij}$ is vertically adjacent to $L_{i-2, j}$.\\
									If $i \leq a-3$, then $L_{ij}$ is vertically adjacent to $L_{i+2, j}$.
						\item If $i \in \{0, 1\}$, then $L_{ij}$ is vertically adjacent to $L_{i-2+a, j+(-1)^j}$.\\
									If $i \in \{a-2, a-1\}$, then $L_{ij}$ is vertically adjacent to $L_{i+2-a, j+(-1)^j}$.
						\item If $j \geq 2$, then $L_{ij}$ is horizontally adjacent to $L_{i, j-2}$.\\
									If $j \leq a-3$, then $L_{ij}$ is horizontally adjacent to $L_{i, j+2}$.
						\item If $j \in \{0, 1\}$, then $L_{ij}$ is horizontally adjacent to $L_{i+(-1)^i, j-2+a}$.\\
									If $j \in \{a-2, a-1\}$, then $L_{ij}$ is horizontally adjacent to $L_{i+(-1)^i, j+2-a}$.
					\end{enumerate}
				
				\end{lemma}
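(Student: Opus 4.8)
The plan is to pin down the internal structure of a level precisely enough that the four adjacency rules can essentially be read off, and then to classify the eight knight's moves out of a single vertex. The first step is to prove — by exactly the alternation argument already used for the rim cycles $C_{00},\dots,C_{11}$ (``the number of transpositions is the same at the bottom and at the top'') — that the levels are well defined, that they partition $[6a+2]^2$, and that the level $L_{ij}$ meets each complete block $B_{mn}$ ($0\le m,n\le 5$) in exactly one vertex, namely
\[
 v_{mn}(i,j)=\bigl(ma+i+\varepsilon_n(-1)^i,\ na+j+\varepsilon_m(-1)^j\bigr),\qquad \varepsilon_k=k\bmod 2,
\]
with consecutive blocks' vertices joined by a knight's move. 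In words: inside $L_{ij}$ the within-unit offset toggles between $0$ and $(-1)^i$ (resp.\ $(-1)^j$) each time one changes block-row (resp.\ block-column). The incomplete blocks $B_{i6},B_{6j}$ are handled the same way and, since there are block-rows and block-columns of both parities among $0,\dots,6$, will not be needed for the adjacency count.

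Next I would classify the knight's moves out of $v_{mn}(i,j)$. A knight's move stays inside $L_{ij}$ exactly when its two endpoints lie in equivalent units; a short computation using that $a$ is even (and $a\ge 4$, so $a\nmid 2$) shows this happens precisely for the displacements $(\pm a,(-1)^{y})$ and $((-1)^{x},\pm a)$ out of $(x,y)$, so four of the eight moves are internal and the other four are between-level. For each between-level move one computes the block of the endpoint and then, using that $(i,j)\mapsto v_{mn}(i,j)$ is injective on a fixed block, its level. Carrying this out for the horizontal moves $(\pm a,\pm1)$ produces the horizontal adjacencies — the shift $j\mapsto j\mp2$ of (iii) in the interior, and $j\mapsto j\mp 2\pm a$ together with $i\mapsto i+(-1)^i$ of (iv) when $j\in\{0,1\}$ or $\{a-2,a-1\}$ — and for the vertical moves $(\pm1,\pm a)$ it produces (i) and (ii). Two features make the count come out to exactly two vertical and two horizontal neighbours: the two choices of sign in $\pm a$ give the \emph{same} neighbour, while the two parities of the current block-row (resp.\ block-column) of the starting vertex give the two \emph{different} neighbours $L_{i\mp2,j}$ (resp.\ $L_{i,j\mp2}$); and since every between-level move from every vertex of $L_{ij}$ realises one of the four listed adjacencies, there are no others.

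The routine but delicate part — and the main obstacle — is the edge-case bookkeeping. When $i\in\{0,1,a-2,a-1\}$ the $\pm1$ of a vertical knight's move pushes the $x$-coordinate of $v_{mn}(i,j)$ across a block boundary, so the endpoint lies in a neighbouring block-column and the level index ``wraps'': tracking it through the change of block (where the $y$-offset switches parity because $\varepsilon_{m\pm1}=1-\varepsilon_m$) is exactly what produces $i\mapsto i-2+a$ or $i\mapsto i+2-a$ \emph{and} the extra $j\mapsto j+(-1)^j$ in (ii), and symmetrically for (iv). One also has to check that near the boundary of $[6a+2]^2$ each claimed adjacency is still witnessed by an on-board move — it is, because each claimed neighbour is reached from a whole block-row's or block-column's worth of vertices of $L_{ij}$ and $6a+2$ leaves ample room. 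Finally a single separate verification should be done for $a=2$, where the units form one equivalence class and the construction degenerates.
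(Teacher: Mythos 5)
Your proposal is correct and is essentially the paper's own argument made explicit: the paper reads the same parity-alternating offset pattern of each level inside the blocks off its figure for the representative blocks $B_{11}$ and $B_{12}$ (getting (iii) and (iv) by symmetry across the main diagonal), while you encode that pattern in the closed formula $v_{mn}(i,j)$ and check all eight moves algebraically, which also makes the ``no edges other than these'' direction explicit rather than implicit. Your separate flag for $a=2$ is sensible, since there every unit is equivalent to every other and the level construction degenerates, so the argument really lives in the regime $a\ge 4$.
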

								
			\begin{figure}[!hb]
				\centering
				\includegraphics[scale=0.7]{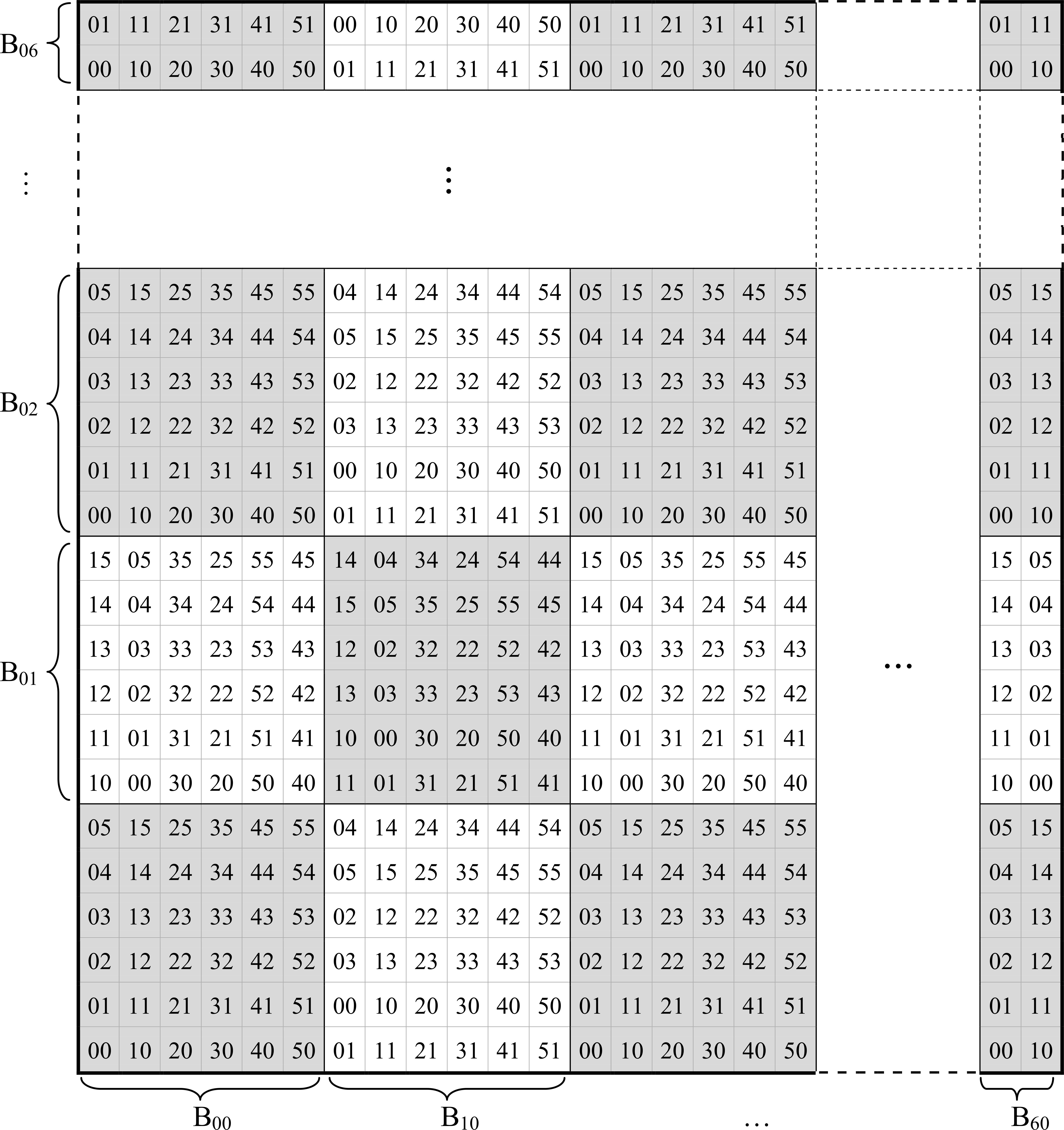}
				\caption{The levels on $[6a+2]^2$ (here $a=6$). Only blocks relevant to the proof of Lemma \ref{l2a} are shown. Division into blocks is indicated by gray shading. 
} \label{g4n_72}
			\end{figure}
					
				\begin{proof}
					For (i) and (ii), we look at blocks $B_{11}$ and $B_{12}$, shown in Figure \ref{g4n_72} for $a=6$. Statements (iii) and (iv) follow from symmetry across the main diagonal.
					
					Let $i$ be even, and look at block $B_{11}$. The moves $(1, -a)$ and $(1, a)$ take us from $(B_{11}, L_{ij})$ to $(B_{10}, L_{i+2, j})$ and $(B_{12}, L_{i+2, j})$ respectively, provided that $i \neq a-2$. In this case $L_{ij}$ and $L_{i+2, j}$ are vertically adjacent.
					
					The same moves connect $(B_{11}, L_{a-2, j})$ to $(B_{20}, L_{0, j+(-1)^j})$ and $(B_{22}, L_{0, j+(-1)^j})$. Note the diagonal step from block $B_{11}$ to $B_{20}$ or $B_{22}$.
					
					In the argument above, we can replace $B_{11}$ by any $B_{kl}$ with $l$ odd. This will be used in the proof of lemma \ref{l4}.
					
					Looking at $B_{12}$ gives us that $(B_{12}, L_{ij})$ is vertically adjacent to $(B_{1, 2 \pm 1}, L_{i-2, j})$ via $(-1, \pm a)$ unless $i=0$. Also using the move $(-1, \pm a)$, $(B_{11}, L_{0j})$ is adjacent to $(B_{0, 1 \pm 1}L_{n-2, j+(-1)^j})$.
					
					For $i$ odd, the same argument works, but we have to use the moves $(1, \pm a)$ to move from block $B_{11}$ and $(-1, \pm a)$ to move from block $B_{12}$. This completes the proof.
				
				\end{proof}
			
				We now write down the same lemma in terms of diagrams. This is used to construct the \emph{guide}, a Hamiltonian cycle in $G$. The vertices of $G$ are assigned \emph{relative unit-positions} (sometimes just \emph{positions}). Level $L_{ij}$ has position A if $i$ and $j$ are both even, B if $i$ is odd and $j$ is even, C if $i$ and $j$ are both odd, D if $i$ is even and $j$ is odd (cf. Figure \ref{g5}). The assignment yields a partition of vertices of $G$ into four subsets.

			\begin{figure}[!htb]
				\centering
				\subfloat[labelformat=simple][]{
					\includegraphics{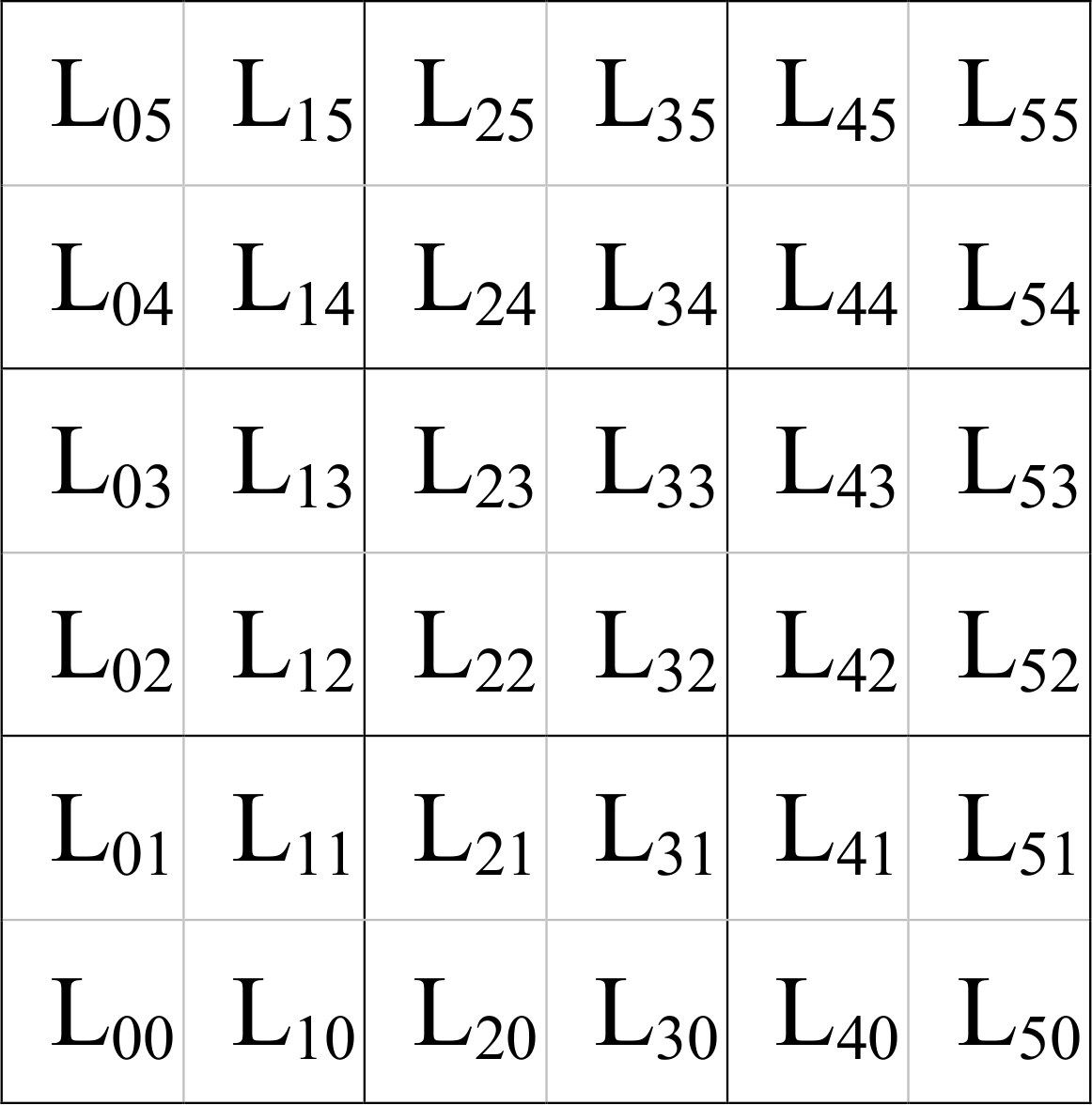}
					}
				\hspace{2cm}
				\subfloat[labelformat=simple][]{
					\includegraphics{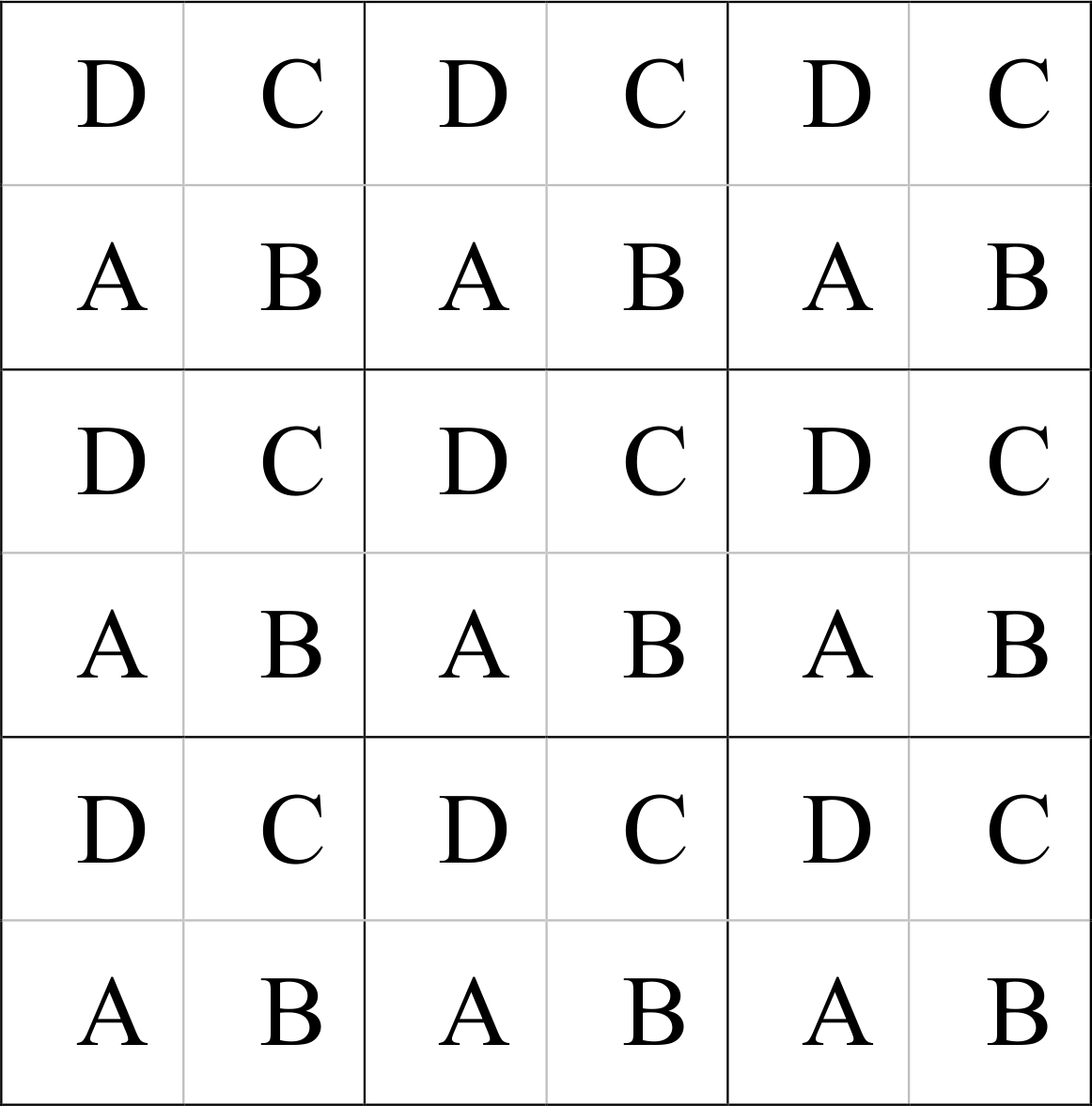}
					}
					\caption{The graph $G$ for $a=6$, divided into units. Levels are assigned positions according to the table (b).
} \label{g5}					
			\end{figure}
			
				Then the permitted moves in the guide are as follows:
				\vspace{-5pt} \begin{enumerate} \itemsep-1pt
					\item From any unit, we can move to its horizontal neighbour. In doing so, the relative unit-position is unchanged (e.g. $L_{32}$ is vertically adjacent to $L_{12}$  and $L_{52}$, by looking at $B_{11}$ and $B_{12}$).
					\item If the unit in which we are positioned is along the left edge of Figure \ref{g5}(a) (that is, we are in one of the levels $L_{0, j}$ or $L_{1, j}$, any $j$), we can step `over the edge' to level $L_{a-2, j+(-1)^j}$ or $L_{a-1, j+(-1)^j}$ respectively. In doing so, the relative unit-position changes vertically, but not horizontally (i.e. A $\leftrightarrow$ D and B $\leftrightarrow$ C). Similarly, if we are in one of the levels $L_{a-2, j}$ or $L_{a-1, j}$, any $j$, we can step to the left-hand side of G with the same effect on relative unit-position. We will call this move a \emph{special move} or \emph{special lifting}.
					\item From any unit, we can move to its vertical neighbour without changing relative unit-position (e.g. $L_{32}$ is horizontally adjacent to $L_{30}$  and $L_{34}$).
					\item If the unit in which we are positioned is along the top or bottom edge of Figure \ref{g5}(a), we can step 'over the edge', while changing relative unit-position horizontally (A $\leftrightarrow$ B and D $\leftrightarrow$ C). We call this a \emph{special move} or \emph{special lifting} as well.
				\end{enumerate}
				
				For example, level $L_{01}$ is adjacent to $L_{21}$ and $L_{03}$ via (i) and (iii), and to $L_{a-2, 0}$ and $L_{1, a-1}$ via special moves (iii) and (iv).
				
				We are now ready to construct the guide.
				
				\begin{lemma} \label{l3}
					For any even $a$, the graph $G$ admits a Hamiltonian cycle (the guide).
				\end{lemma}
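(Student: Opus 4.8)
The plan is to partition $V(G)$ into $a$ pairwise disjoint cycles, each of length $a$, and then merge them one at a time into a single spanning cycle by the compound operation, in exactly the spirit of the sequential concatenation of Lemma~\ref{l2}.

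Write $m=a/2$. For a column pair index $k\in\{0,\dots,m-1\}$ and a parity $\sigma\in\{0,1\}$, let $W_{k,\sigma}$ be the subgraph of $G$ spanned by the vertices $L_{ij}$ with $i\in\{2k,2k+1\}$ and $j\equiv\sigma\pmod 2$. Reading off parts (iii) and (iv) of Lemma~\ref{l2a}, the only edges of $G$ among these vertices are the horizontal steps $L_{2k,j}L_{2k,j+2}$ and $L_{2k+1,j}L_{2k+1,j+2}$ together with the two special liftings $L_{2k,0}L_{2k+1,a-2}$ and $L_{2k+1,0}L_{2k,a-2}$; running up one column of the pair, across, down the other, and back, these form a single cycle of length $a$ (exactly as in Section 2.1.1, where each colour was shown to induce a cycle; compare Figure~\ref{g4}). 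The $2m=a$ cycles $W_{k,\sigma}$ partition $V(G)$. The case $a=2$ degenerates, and I dispose of it at the end.

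Next I locate bridges. For $0\le k\le m-2$ and each $\sigma$, the move-(i) edges $L_{2k,y}L_{2k+2,y}$ (with $y\equiv\sigma\pmod 2$) turn the parallel pair $L_{2k,y}L_{2k,y+2}\in W_{k,\sigma}$, $L_{2k+2,y}L_{2k+2,y+2}\in W_{k+1,\sigma}$ into a bridge between $W_{k,\sigma}$ and $W_{k+1,\sigma}$, with $y$ free to choose. The only cycles not yet linked are the even family $\{W_{k,0}\}$ and the odd family $\{W_{k,1}\}$; for these I use a special lifting of type (ii): since $L_{a-1,0}\sim L_{1,1}$ and $L_{a-1,2}\sim L_{1,3}$, the edges $L_{a-1,0}L_{a-1,2}\in W_{m-1,0}$ and $L_{1,1}L_{1,3}\in W_{0,1}$ form a bridge between $W_{m-1,0}$ and $W_{0,1}$. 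List the $2m$ cycles in the order $W_{0,0},W_{1,0},\dots,W_{m-1,0},W_{0,1},W_{1,1},\dots,W_{m-1,1}$ and concatenate each consecutive pair by the corresponding bridge — type (ii) for the step $W_{m-1,0}\to W_{0,1}$, type (i) for the rest. This is $2m-1$ compound operations and produces one spanning cycle, the guide.

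As always with sequential concatenation, the point to verify is that the (at most two) bridges meeting at any one cycle delete two distinct edges of it, so that each deletion is legal at the moment it is performed; this is what the freedom in the $y$'s is for. Consecutive type-(i) bridges inside a parity family share only the cycle $W_{k+1,\sigma}$, from which they delete edges in columns $2k$ and $2k+2$ — distinct, and indeed one may simply alternate $y\in\{0,2\}$ (legal since $a\ge 4$ forces $a-4\ge 0$, and for $a=4$ there is at most one such bridge per family); the type-(ii) bridge deletes edges in columns $1$ and $a-1$, which never clash with the column-$2k$ edges deleted by type-(i) bridges. Hence all deletions are legal and the concatenation goes through. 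The main obstacle, such as it is, is precisely this indexing bookkeeping together with the routine check that each $W_{k,\sigma}$ is a cycle — nothing is deep. Finally, when $a=2$ the graph $G$ has the four vertices $L_{00},L_{01},L_{10},L_{11}$, and Lemma~\ref{l2a} gives it exactly the edges $L_{00}L_{01}$, $L_{01}L_{11}$, $L_{11}L_{10}$, $L_{10}L_{00}$, so $G$ is itself a Hamiltonian cycle.
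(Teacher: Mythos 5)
Your argument is correct as a proof of the statement of Lemma \ref{l3}, but it takes a genuinely different route from the paper. The paper splits the vertices of $G$ by relative unit-position into four $\frac{a}{2}\times\frac{a}{2}$ grid graphs (positions A, B, C, D of Figure \ref{g5}), runs a Hamiltonian path through each, and joins the four paths by exactly four special moves; you instead partition $V(G)$ into the $a$ column-pair cycles $W_{k,\sigma}$ (essentially the rim colouring of Section 2.1.1 reappearing inside $G$) and merge them by sequential concatenation. Your identification of the induced cycles and of the bridges checks out against Lemma \ref{l2a}, with one slip: in the shared cycle $W_{k+1,\sigma}$, the bridge to $W_{k,\sigma}$ deletes $L_{2k+2,y}L_{2k+2,y+2}$ and the bridge to $W_{k+2,\sigma}$ deletes $L_{2k+2,y'}L_{2k+2,y'+2}$, so both deletions lie in the \emph{same} column $2k+2$, not in ``columns $2k$ and $2k+2$''; distinctness is therefore not automatic, but it is rescued precisely by the alternation of $y$ that you prescribe (with $y\in\{1,3\}$ in the odd family), and your treatment of $a=4$ and $a=2$ is fine, so the construction stands. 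The real difference is in what each approach buys: the lemma nominally asks only for some Hamiltonian cycle, but the guide constructed in the paper has extra structure that the proof of Theorem \ref{t5} leans on --- each relative unit-position is traversed consecutively, there are exactly four special liftings, and each position segment contains exactly one odd level, which is what makes the gray/white shade bookkeeping of steps A1--A3 and B1--B2 close up. Your cycle keeps both horizontal-wrap edges of every $W_{k,\sigma}$ and the two edges added by the type-(ii) bridge, hence uses $2a+2$ special moves and interleaves the four positions; it proves the lemma as stated, but it could not be substituted for the paper's guide in Section 2.2.2 without reworking that implementation argument.
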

				\begin{center}
					\includegraphics[scale=1]{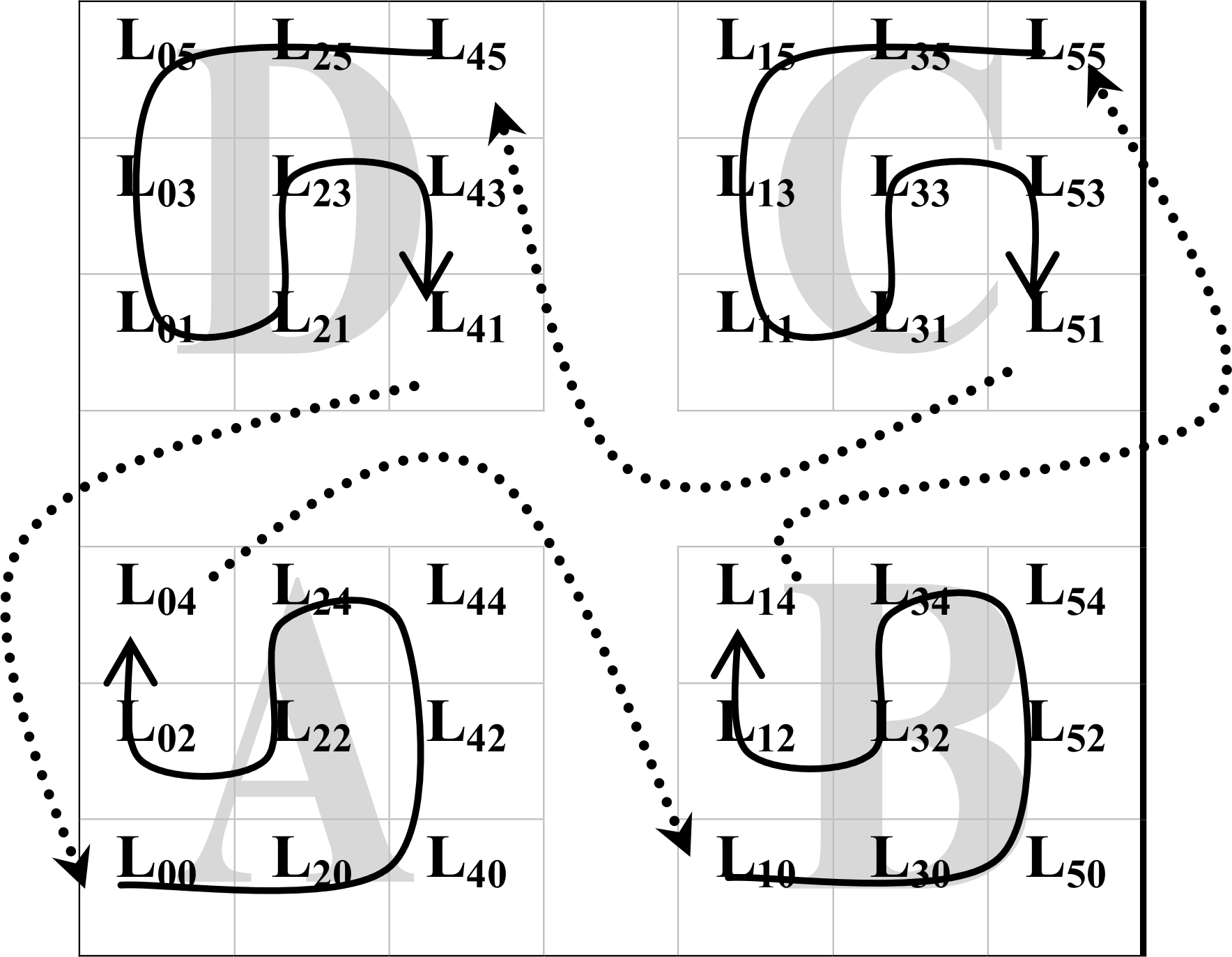}
				\end{center}
				
				\begin{proof}
					The figure above is another representation of $G$ (we use the example $a=6$). It consists of four $\frac{a}{2} \times \frac{a}{2}$ grid graphs related by special moves. Vertices of each grid graph are levels which have the same relative unit-position (this position is specified in the background).
					
Our guide starts at $L_{00}$ and follows the solid arrow to cover the entirety of position A and finish at vertex $L_{0, a-2}$. It is important to notice that such a path exists regardless of whether $\frac{a}{2}$ is odd or even.

Then use a special move (dotted arrow) from position A (vertex $L_{0, a-2}$) to position B (vertex $L_{10}$) and cover all vertices with position B. Follow the dotted arrow to vertex $L_{a-1, a-1}$ with position C. Continue with positions C and D.

We finish at vertex $L_{a-2, 1}$ with position D, which is adjacent to $L_{00}$ via a special move.
				\end{proof}
				
		\subsubsection{Implementing the guide}
				We denote blocks starting from the bottom left corner by $B_{ij}$, with $i$ and $j$ ranging from 0 to 6. Block $B_{ij}$ is \emph{shaded} gray if $i+j$ is even, and white otherwise.
				
				We already used the coordinatisation of $[6a+2]^2$ as $(B_{i_1 j_1 },L_{i_2  j_2 })$. In these coordinates, the guide is our desired projection of an $(a, 1)$ knight's tour onto the space of levels and we are yet to prove that there is a corresponding path in each level. We keep coming back to Figure \ref{g4} because the $(4, 1)$ knight is enough to demonstrate the general case.
				
				Consider the subgraphs of our $(a, 1)$ knight's graph induced by a single level. Each subgraph is just a 2-dimensional grid graph whose vertices are blocks $B_{ij}$. The subtlety is that the sizes of this grid graph differ for different levels.
			
				\begin{itemize} \itemsep-1pt
					\item Levels $L_{00}$, $L_{10}$, $L_{01}$ and $L_{11}$ are $7 \times 7$ grid graphs (Fig. \ref{g7a} (a)). We call them \emph{odd} levels. These admit a Hamiltonian path between any two gray blocks. The proof can be found in Appendix A.
					\item Levels $L_{ij}$, where exactly one of the indices $i$ and $j$ is in $\{0, 1\}$ are $7 \times 6$ or $6 \times 7$ grid graphs (Fig \ref{g7a} (b) or (c)).
					\item The remaining levels are $6 \times 6$ grid graphs (Fig. \ref{g7a} (d)).					
				\end{itemize}
				We call the latter two \emph{even} levels, and they admit Hamiltonian paths between any two blocks of different shade.
				
			\begin{figure}[!htb]
				\centering
				\subfloat[labelformat=simple][]{
					\includegraphics[scale=.24]{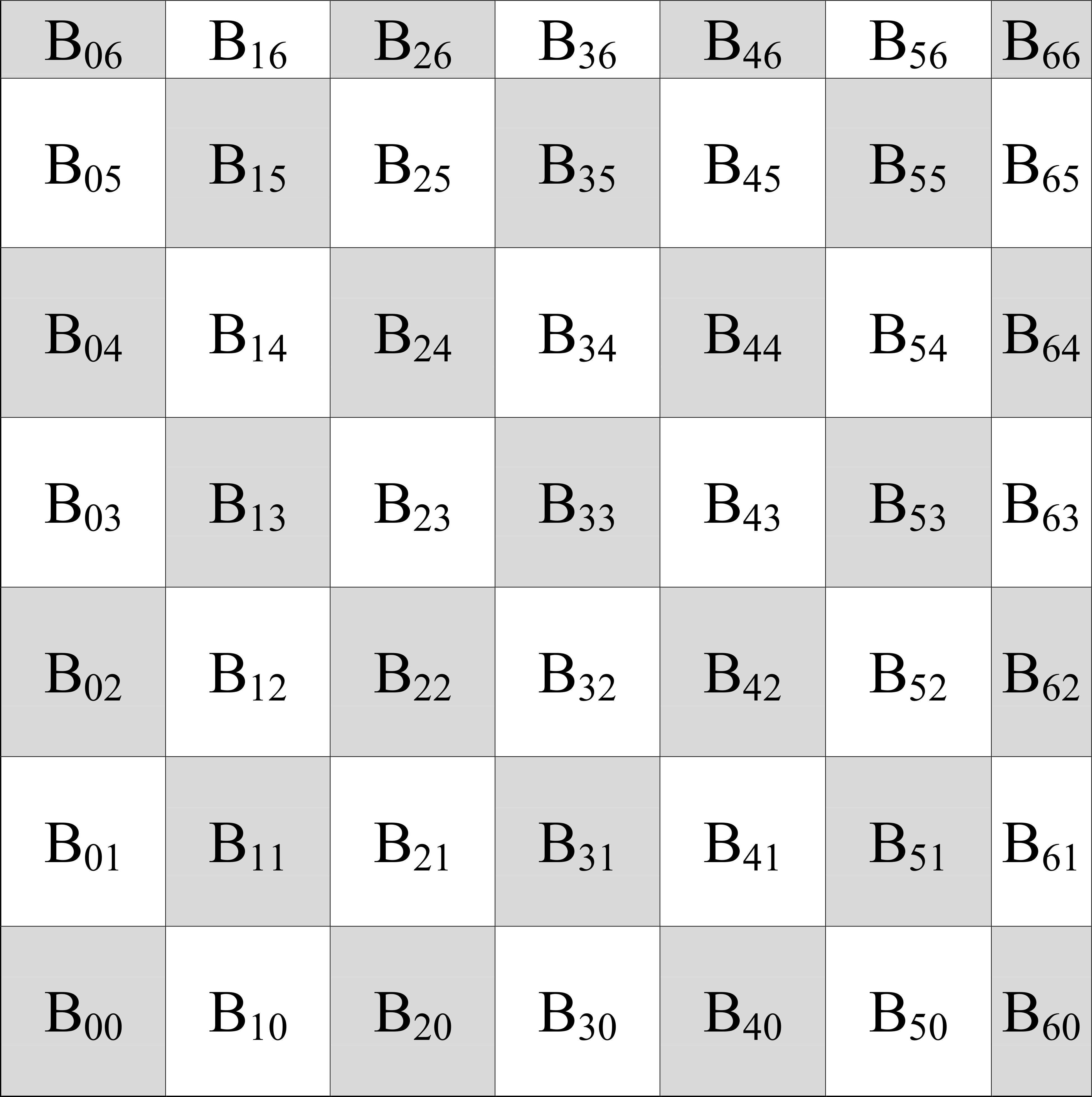}
					}
				\hspace{.5cm}
				\subfloat[labelformat=simple][]{
					\includegraphics[scale=.24]{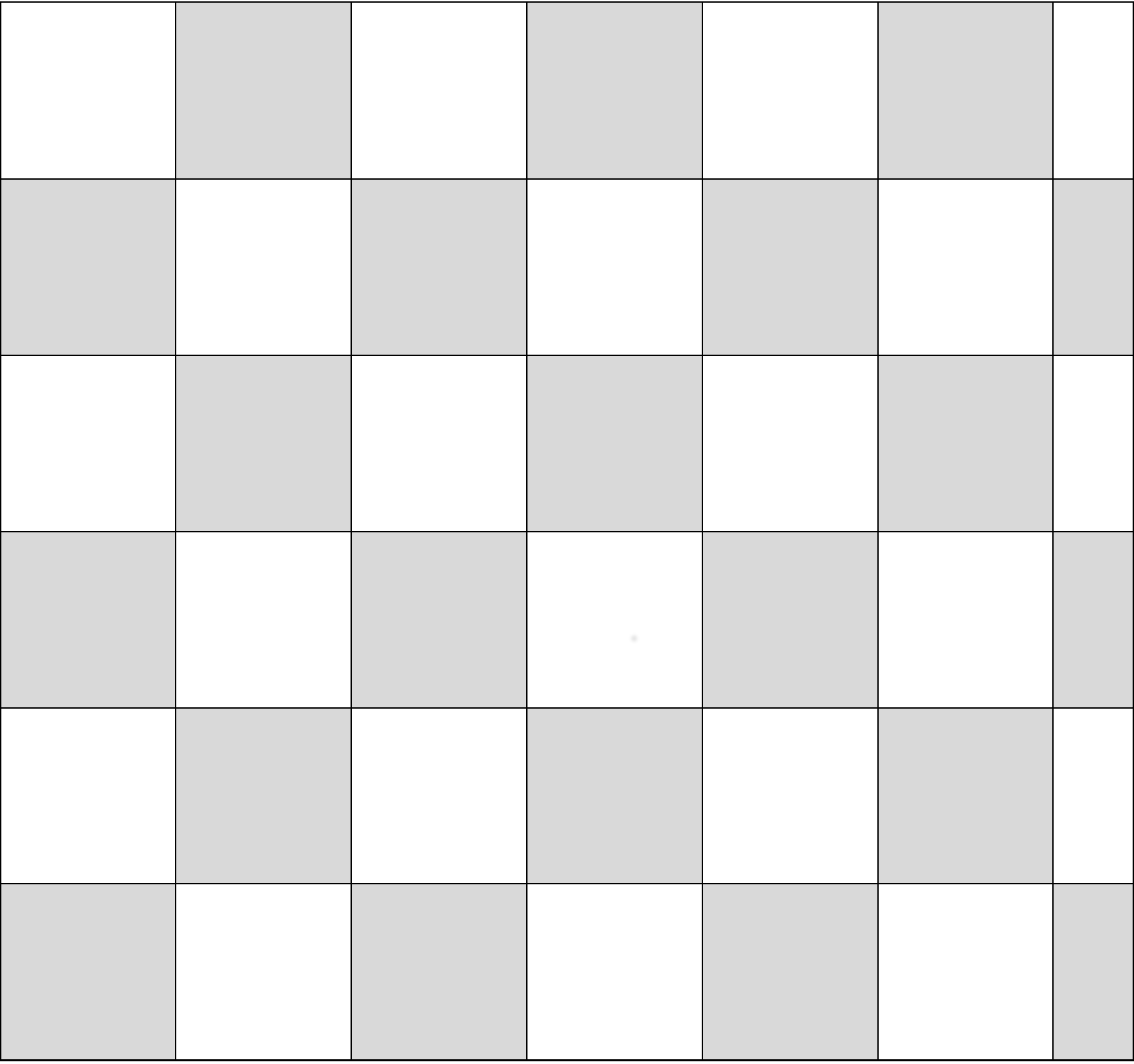}
					}
				\hspace{.5cm}
				\subfloat[labelformat=simple][]{
					\includegraphics[scale=.24]{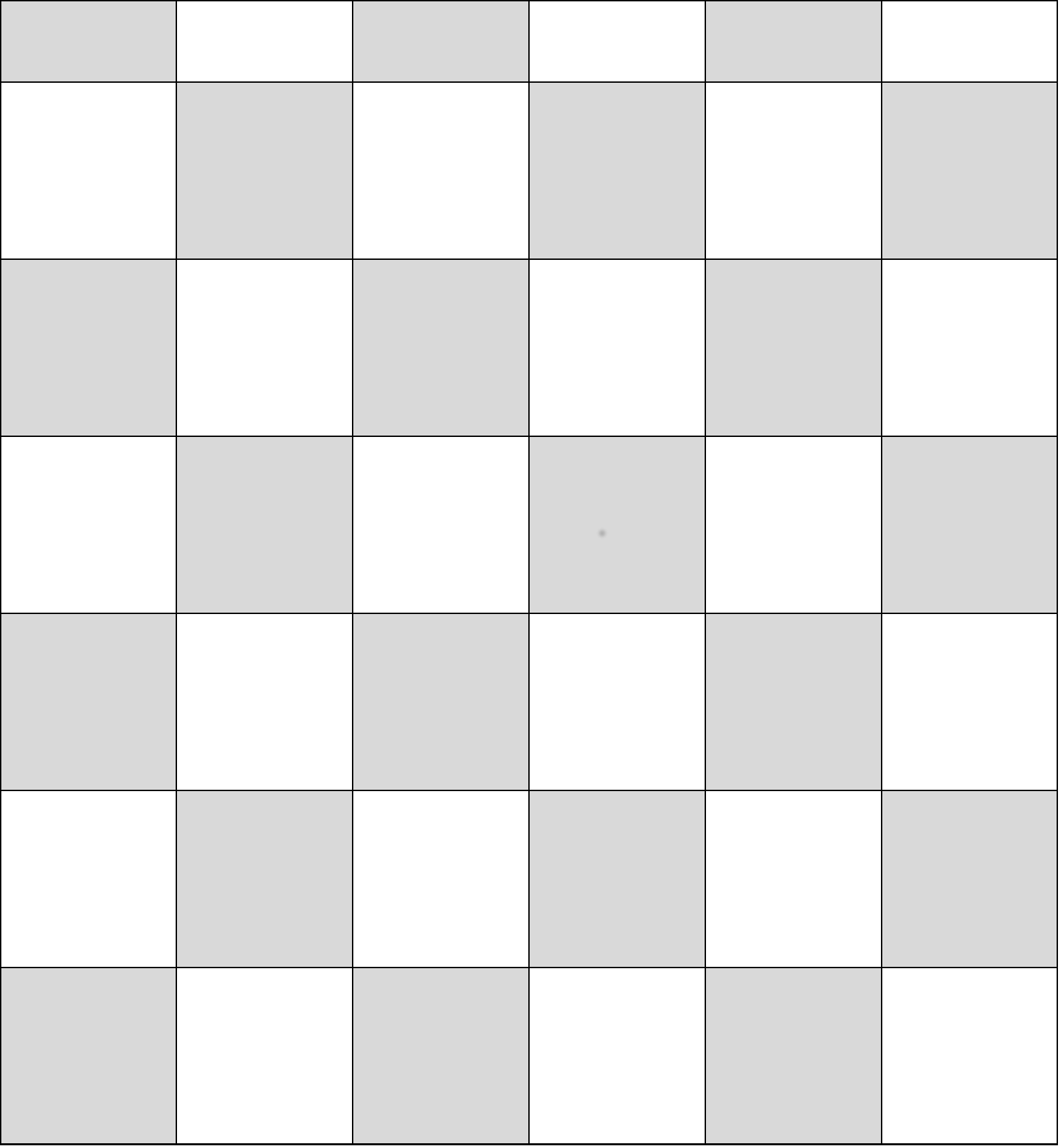}
					}		
				\hspace{.5cm}
				\subfloat[labelformat=simple][]{
					\includegraphics[scale=.24]{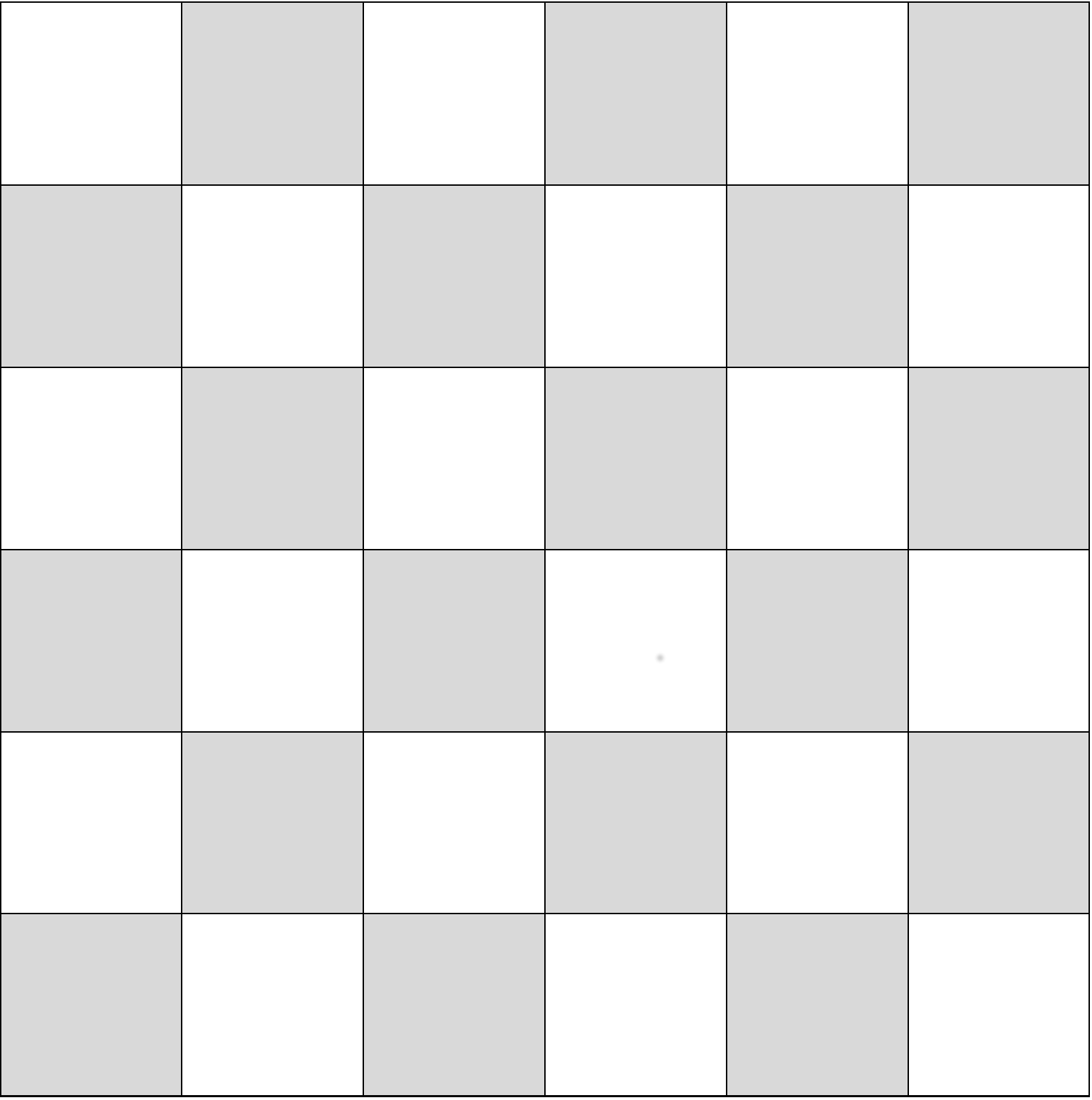}
					}		
					\caption{For any even $a$, within each level, our $[6a+2]^2$ chessboard is reduced to one of these grid graphs whose vertices are blocks. We \emph{shade} these blocks as a standard chessboard.\\
Recall, the difference in size arises because the incomplete blocks along the edge of $[6a+2]^2$ do not contain all levels (see Figure \ref{g4}).
} \label{g7a}					
			\end{figure}

					It remains to study adjacency between levels in more detail. Let the vertices $V$ and $V^\prime$ be adjacent in the $(a, 1)$ knight's graph and belong to distinct levels $L$ and $L^\prime$ respectively. Then the block containing $V$ is called a \emph{lift} from $L$ to $L^\prime$.
					
					The four horizontally lined blocks in Figure \ref{g7} are called \emph{transition region}, or simply \emph{T-region}, so T$= \{B_{ij}: i, j \in \{2, 3\} \}$. The vertically lined blocks are called \emph{T1-region}, so T1 $= \{B_{ij}: i, j \in \{1, 2, 3, 4\} \} \setminus $T.
				
				\begin{figure}[!ht]
					\centering
					\includegraphics[scale=.3]{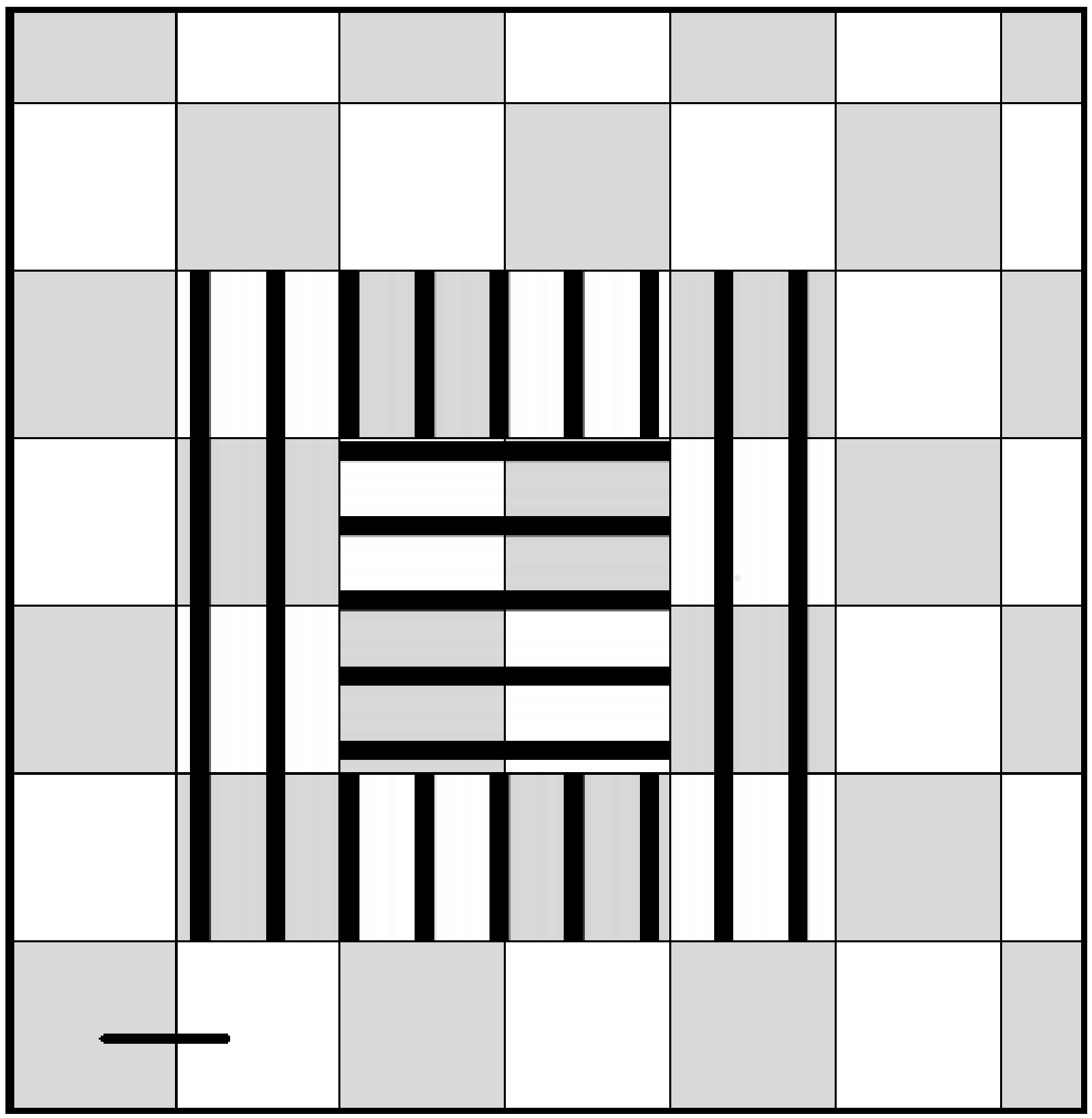}
					\caption{Blocks of a $[6a+2]^2$ chessboard, with T-region (inside) and T1-region (outside) indicated.}\label{g7}
				\end{figure}  				  

				\begin{lemma} \label{l4}
					Let $a$ be even. Given two adjacent levels $L$ and $L^\prime$, T-region contains lifts from $L$ to $L^\prime$ of either shade (white and gray).
				\end{lemma}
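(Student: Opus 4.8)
The plan is to reduce to vertically adjacent levels, run through the short list of adjacency types supplied by Lemma~\ref{l2a}, and in each case pin down exactly which blocks serve as lifts. For the reduction: the reflection $(x,y)\mapsto(y,x)$ is an automorphism of the $(a,1)$ knight's graph on $[6a+2]^2$ that sends block $B_{ij}$ to $B_{ji}$, hence fixes the T-region setwise and preserves shades (since $i+j=j+i$), while interchanging vertical and horizontal adjacency of levels. So it suffices to treat a vertically adjacent pair $L,L'$, and by Lemma~\ref{l2a}(i)--(ii) we may assume, up to relabelling, that either $L'=L_{i+2,j}$ with $i\le a-3$ (the regular case) or $L$ has first index in $\{0,1,a-2,a-1\}$ and $L'$ is the partner it meets ``over the edge'' (the special case). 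Since the T-region consists entirely of complete $a\times a$ blocks, there are no edge effects to worry about there.

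The engine of the argument is the position, inside a complete block, of the single vertex that each level contributes to it. As already used in the discussion preceding Lemma~\ref{l2a} (and as one reads off Figure~\ref{g4}), within a complete block $B_{KL}$ the level $L_{ij}$ occupies one vertex whose block-local coordinates are a fixed ``twisted-linear'' function of $i,j$ and of the parities of $K$ and $L$; passing to a neighbouring block along a block-row or block-column shifts that vertex by $(\pm1,\pm a)$. I would record this formula once and for all, and then for each adjacency type determine which of the four moves $(\pm1,\pm a)$ carries the $L$-vertex of a complete block $B_{KL}$ onto an $L'$-vertex. Matching block-local coordinates leaves exactly one parity condition, and one finds that the lifts from $L$ to $L'$ are \emph{precisely} the complete blocks $B_{KL}$ with a prescribed parity of $K$, or else precisely those with a prescribed parity of $L$ (which of the two, and which parity, depending on the case). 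The ``diagonal step'' flagged in the proof of Lemma~\ref{l2a} is exactly what happens when the $\pm1$-coordinate of the move crosses a block boundary; the special case is handled identically, the only change being that the relevant $L$-vertices sit next to a vertical side of their blocks rather than in the interior.

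Finally, observe that the T-region $\{B_{22},B_{23},B_{32},B_{33}\}$ meets each of the four ``half-sets'' $\{K\text{ even}\}$, $\{K\text{ odd}\}$, $\{L\text{ even}\}$, $\{L\text{ odd}\}$ in exactly two blocks of \emph{opposite} shade --- namely $\{B_{22},B_{23}\}$, $\{B_{32},B_{33}\}$, $\{B_{22},B_{32}\}$, $\{B_{23},B_{33}\}$ respectively --- since $B_{22},B_{33}$ are gray and $B_{23},B_{32}$ are white. Whichever half-set describes the lifts from $L$ to $L'$, the T-region therefore contains both a gray lift and a white lift, which is what the lemma asserts. The hard part is the bookkeeping in the middle step: getting the twisted-linear description right, keeping the argument uniform in $a$ (compare the proof of Lemma~\ref{l2a}, where $B_{11}$ could be replaced by any $B_{kl}$ with $l$ odd), and carrying the two sign cases $i$ even and $i$ odd through without slips.
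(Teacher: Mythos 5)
Your proposal is correct and follows essentially the same route as the paper: identify that the lifts between two adjacent levels form a full parity class of block rows (or, via the diagonal symmetry, block columns), and then observe that the T-region $\{B_{22},B_{23},B_{32},B_{33}\}$ meets every such parity class in two blocks of opposite shade. The ``twisted-linear'' bookkeeping you defer is exactly the content already supplied by Lemma~\ref{l2a} (where $B_{11}$ may be replaced by any $B_{kl}$ with $l$ odd) and Figure~\ref{g4}, which is all the paper itself invokes.
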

				\begin{proof}
					Let $L$ and $L^\prime$ be vertically related. Then the set of lifts from $L$ to $L^\prime$ is of form $\{B_{ij}:j \text{ even}\}$ or $\{B_{ij}:j \text{ odd}\}$ (from Figure \ref{g4}), say the prior. T-region contains two blocks in an even row, one of each shade. These are our required lifts from $L$ to $L^\prime$.
					
The set of lifts is equivalent for horizontally related levels, so the statement still holds.
				\end{proof}
				\begin{theorem} \label{t5}
					If $a$ is even, there exists a structured $(a, 1)$ knight's tour in $[6a+2]^2$.
				\end{theorem}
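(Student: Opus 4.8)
The plan is to realise the guide of Lemma~\ref{l3} inside the $(a,1)$ knight's graph. Recall that when the knight is confined to a single level $L_{ij}$ the graph it sees is a grid graph whose vertices are the blocks that meet $L_{ij}$, each contributing exactly one vertex of $L_{ij}$; this grid is $7\times 7$ for an odd level and $6\times 6$, $6\times 7$ or $7\times 6$ for an even level. Hence a Hamiltonian path of this block grid is nothing but a path in the knight's graph through every vertex of $L_{ij}$. So I would write the guide as a cyclic sequence of levels $L^{(1)}\to L^{(2)}\to\cdots\to L^{(m)}\to L^{(1)}$ with $m=a^2$, choose for each $L^{(k)}$ a Hamiltonian path $P_k$ of its block grid with a prescribed \emph{entry block} and \emph{exit block}, and join the exit endpoint of $P_k$ to the entry endpoint of $P_{k+1}$ by the single knight's move (a \emph{lift}) witnessing the guide-edge $L^{(k)}\to L^{(k+1)}$. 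Concatenating $P_1,\dots,P_m$ through these lifts gives one cycle; it meets every vertex since the levels partition $[6a+2]^2$, and it lies in the knight's graph since the $P_k$ do and the lifts are knight's moves, so it is an $(a,1)$ knight's tour.

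To run this I must reconcile three requirements. By Lemma~\ref{l4} every lift can be placed in the $T$-region and, crucially, the shade of the exit block of each $L^{(k)}$ can be chosen at will; the entry block of $L^{(k+1)}$ is then a neighbouring block, in $T$ or in the surrounding $T1$-region. Next, each block grid is bipartite with the shaded blocks as one side, so a Hamiltonian path between prescribed blocks exists only if those blocks are both shaded (in the $7\times 7$ case) or of opposite shade (in the $6\times 6$, $6\times 7$, $7\times 6$ cases); thus the entry and exit blocks of each level are forced to carry the shades its grid demands. Finally, for the tour to be \emph{structured} each $P_k$ must use the corner edge $B_{00}B_{10}$, because for $L_{ij}$ this edge is precisely $\{(i,j),(i+a,j+(-1)^j)\}$; as $B_{00},B_{10}\notin T$ this is just one extra prescribed edge, disjoint from the path's endpoints, so it only strengthens the grid-graph lemmas (pushed to the appendix) rather than changing the scheme.

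The heart of the argument is choosing the exit-block shades so that all these parities hold simultaneously around the cyclic guide. Writing $s_k$ for the free shade of the exit block of $L^{(k)}$, the entry block of $L^{(k+1)}$ has shade $s_k\oplus\delta_k$ (with $\delta_k\in\{0,1\}$ recording whether the chosen lift crosses a block boundary), and the condition imposed by $L^{(k+1)}$ is $s_k\oplus\delta_k\oplus s_{k+1}=\epsilon_{k+1}$, with $\epsilon$ equal to $0$ on odd levels and $1$ on even levels. Traversed once around the guide this cyclic system over $\mathbb{F}_2$ is consistent exactly when $\bigoplus_k\delta_k\oplus\bigoplus_k\epsilon_k=0$; the number of even levels is $a^2-4\equiv 0\pmod{2}$, so it reduces to controlling $\bigoplus_k\delta_k$. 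Here one uses that the guide of Lemma~\ref{l3} can be (and is) chosen with no edge joining two odd levels — the four odd levels $L_{00},L_{01},L_{10},L_{11}$ lie in four distinct position-grids and, for $a\ge4$, their guide-neighbours are all even levels — so no transition is rigidly pinned to linking two $7\times 7$ grids and the remaining slack suffices to kill $\bigoplus_k\delta_k$.

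The step I expect to be the real obstacle is this reconciliation: aligning the parities of Hamiltonian paths in grids of three different shapes with the shade freedom from Lemma~\ref{l4}, globally and consistently around the guide, while still leaving room to route every $P_k$ through the fixed corner edge. By contrast, the pure grid-graph Hamiltonian-path lemmas (now with one prescribed edge) and the check that the concatenation closes up into a single cycle are routine. The case $a=2$, where all four levels are $7\times 7$ grids so the non-adjacency observation degenerates, is small enough to be settled directly.
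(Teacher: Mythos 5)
Your overall scheme is the same as the paper's: follow the guide of Lemma~\ref{l3}, traverse each level by a Hamiltonian path of its block grid, place the lifts in the T-region via Lemma~\ref{l4}, and get the structured edges for free because $B_{00}$ has degree two and is never an endpoint. The gap is precisely in what you call the heart of the argument, the shade bookkeeping. You encode each level's feasibility as a parity relation $s_k\oplus\delta_k\oplus s_{k+1}=\epsilon_{k+1}$ and claim the cyclic system is consistent once the single global condition $\bigoplus_k\delta_k\oplus\bigoplus_k\epsilon_k=0$ holds. But for the four odd levels the true constraint is not ``entry and exit have equal shade'': a Hamiltonian path of a $7\times 7$ block grid must have \emph{both} endpoints gray (the majority shade), so at those four levels the shades are pinned to a specific value, not merely to each other. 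Since moreover each $\delta_k$ is forced, not free --- a special lift is a diagonal block move and preserves shade, every other lift is a straight move and flips it, and which case occurs is dictated by the guide edge --- the shade sequence propagates deterministically, and consistency must be checked on each of the four guide segments between consecutive odd levels, not just summed once around the cycle. Your global condition is necessary but not sufficient, and the ``slack'' you invoke to kill $\bigoplus_k\delta_k$ does not exist (that sum is simply $a^2-4\equiv 0$, but that is beside the point).

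What actually closes the gap, and is the paper's key observation, is that each relative unit-position A, B, C, D contains exactly one odd level, so each segment between consecutive odd levels contains exactly one special (shade-preserving) lift, while every ordinary lift flips the shade and every even level flips it back; hence starting gray after one odd level the knight is gray again on entering the next. This is the content of the paper's steps A1--A3, B1--B2 and of its remark that each position requires exactly one execution of the odd-level step. You in fact state the right ingredient (the four odd levels lie in the four distinct position classes) but deploy it only to argue that no guide edge joins two odd levels, which is not what is needed. Two smaller points: you must also arrange that every entry block lies in T1 (the appendix lemmas give paths from T1 to T, and the special lift must choose the diagonal landing in T1, as the paper does in B2); and for $a=2$ your deferral to a direct construction must still produce a \emph{structured} tour of $[14]^2$, whereas the segment-wise argument covers that case uniformly.
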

				\begin{proof}
					To construct this path, we will have to follow three graphs - Figure \ref{g7} is sufficient to illustrate moving within a single level (steps A1 - A3). The guide dictates the order in which levels are traversed. Finally, to see how the transitions  between levels (B1 - B2) reflect on the knight's position in Figure \ref{g7} (i.e. moving between blocks), we have to go back to the full chessboard (Figure \ref{g4} or \ref{g4n_72}).
					
					Our algorithm for implementing the guide will have the following steps.
					\begin{description} \itemsep-1pt
						\item[A1] 	Suppose we are at the start of an even level $E$, in one of the gray blocks within the T1-region. Let $E$ be followed by $L$ in the guide. This step comprises simply doing a Hamiltonian path on level $E$ and finishing in the white lift from $E$ to $L$ within the T-region. The existence of this lift is granted by Lemma \ref{l4}.\\
					Blocks $B_{00}$ and $B_{10}$ are adjacent in this Hamiltonian path because $B_{00}$ has only two neighbours ($B_{01}$ and $B_{10}$), and is not an endpoint.
						\item[A2]	Same as A1, except that we start in a white and finish in a gray block.
						\item[A3]	As above, except that we are traversing an odd level. The algorithm below guarantees that we start in a gray block within the T1-region, and we make sure to finish in a gray lift in T-region.\\
						Blocks $B_{00}$ and $B_{10}$ are also adjacent in this path, as indicated by the cord connecting them in Figure \ref{g7}.
					\end{description}	
						It is not necessary to distinguish between steps A1 - A3, but doing so makes it easier to track block shades along our knight's tour (see the algorithm below).
						
					We go back to Figure \ref{g4} and Lemma \ref{l2a} to notice that lifting from a level with relative position A to another level with position A reflects on Figure \ref{g7} as a single horizontal or vertical move. However, special lifting results in a diagonal change of block. This leads to the following steps.
					\begin{description}	\itemsep-1pt
						\item[B1]	We have just finished a level and are finding ourselves in a lift within T-region. The next move in the guide is not special, so we simply move horizontally or vertically into the T1-region while lifting.\\
					This lifting causes a switch from a gray block to white or vice versa.
						\item[B2]	We have just finished a level that is the last one with its relative unit-position (A, B, C or D). This means that we are in a gray lift within T-region, and the next move in the guide is special. This still leaves us with a choice - if we would like to lift vertically, e.g.~from $L_{00}$ to $L_{a-2, 1}$ we have a choice of direction vertically. Thus it is possible to move diagonally up or diagonally down. At least one of the resulting blocks will be in T1, so we choose that one.
					\end{description}
					
					Finally the algorithm is the following:
\renewcommand{\labelenumi}{{\arabic{enumi}.}}
\vspace{-5pt}			\begin{enumerate}	\itemsep-1pt
						\item The knight starts at a vertex from $L_{a-2, 1}$ which we also regard as the final one. This initial vertex belongs to $B_{33}$ - the gray block within T-region which is a lift from $L_{a-2, 1}$  to $L_{00}$.
						\item The knight makes the transition B2.
						\item A1 - B1 is repeated as long as the knight is on an even level (possibly zero times).
						\item The knight is on an odd level, so we perform step A3.
						\item B1 - A2 is repeated until all levels of the current position are covered (again, possibly zero times).
						\item If the knight is on a level with position A, B or C, go back to 2. Otherwise, go to 7.
						\item By choice of an appropriate path on level $L_{a-2, 1}$, we are back to the starting point.
					\end{enumerate}
\renewcommand{\labelenumi}{{\normalfont (\roman{enumi})}}
					To make sure the algorithm does construct a Hamiltonian cycle, we note that
					\vspace{-5 pt} \begin{itemize}	\itemsep-1pt
						\item The knight finishes each stage 1-6 inside a gray block, which guarantees that the next step is possible.
						\item Stages 2-6 are repeated four times - once for each relative unit-position in the guide.
						\item The key to validity of the algorithm is that each position (A, B, C, D) contains exactly one odd level and thus requires exactly one execution of step 4.
						\item It remains to show that the $(a, 1)$ knight's tour we constructed is structured. This is true by the fact that blocks $B_{00}$ and $B_{10}$ are adjacent when covering each level (steps A1-A3). Recall, the $a^2$ edges required in definition of a structured tour are exactly those lying between the two blocks in the bottom left corner.
					\end{itemize}
				\end{proof}				
				  	
					The third bullet gives the reason why $n \equiv \pm 2 \left( \text{mod } 2k \right)$ and potentially $n \equiv 0 \left( \text{mod }  2k \right)$ are the only cases in which we can hope to implement our guide - in other cases the number of odd levels is greater than four, requiring more than four special lifts.
					
					Even worse, for any larger $a$ there will be odd cycles from which there are no special moves (e.g. 22, 23, 32, 33 in case  $a=6$). Then our shading argument shows that for most board sizes there is no $(a, 1)$ knight's tour that covers each level entirely before moving on to the next one.
				
	\subsection{The main theorem}
		In analogy with link A, which is used for sequential concatenation of $(a, 1)$ knight's tours in Lemma 4, we define \emph{link B} to be the edge $\{(a, 0), (0, 1)\}$. Link B will be used to perform sequential concatenation in a slightly different context (Theorem \ref{t8}).
		\begin{corollary} \label{c7}
			For any even numbers $a$ and $n$, with $n \geq a(6a+2)$, there exists an $(a, 1)$ knight's tour in $[n]^2$. In addition, this tour can be chosen to contain link B.
		\end{corollary}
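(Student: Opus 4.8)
The plan is to push the basic case $[6a+2]^2$ of Theorem \ref{t5} through the two extension lemmas and then check by an elementary counting argument that the side lengths so produced exhaust all even $n\ge a(6a+2)$; the link-B clause will then come for free.

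The one structural point that must be settled before the lemmas can be applied is that $[6a+2]^2$ carries not only a \emph{structured} tour (Theorem \ref{t5}) but also an \emph{A-linked} one, since that is exactly what the hypothesis of Lemma \ref{l2} consumes. I would obtain this by revisiting the algorithm in the proof of Theorem \ref{t5}: link A, $\{(a-1,1),(a-2,a+1)\}$, joins two levels, and with a suitable choice of the Hamiltonian paths inside those levels and of the lift between them the constructed tour can be forced to use it. I expect this to be the main obstacle. Theorem \ref{t5}, Lemma \ref{l1} and Lemma \ref{l2} already do all the real work, but one cannot treat Theorem \ref{t5} as a pure black box here: the basic board must be equipped with \emph{both} tours that the induction will use, and it must be the board $[6a+2]^2$ itself rather than a larger one, because $n$ may be as small as $a(6a+2)=6a^2+2a$.

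Granting both tours on $[6a+2]^2$: Lemma \ref{l2} yields a structured $(a,1)$ knight's tour on $[k(6a+2)]^2$ for every integer $k\ge 1$, and Lemma \ref{l1}, applied repeatedly (legitimate since $k(6a+2)\ge 2a$ and the tour remains structured at each step), extends each of these to a structured tour on $[\,k(6a+2)+2al\,]^2$ for every $l\ge 0$. It then remains to write an arbitrary even $n\ge a(6a+2)$ in the form $k(6a+2)+2al$ with $1\le k\le a$ and $l\ge 0$, where the bound $k\le a$ guarantees that the base board $[k(6a+2)]^2$ does not exceed $[n]^2$. Putting $n=2m$ (so $m\ge 3a^2+a$) and taking the unique $k\in\{1,\dots,a\}$ with $k\equiv m\pmod a$, we get $k(6a+2)=6ak+2k\le 6a^2+2a\le n$ and $n-k(6a+2)\equiv 2m-2k\equiv 0\pmod{2a}$, so $l=(n-k(6a+2))/(2a)$ is a non-negative integer, as required. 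Hence $[n]^2$ admits a structured $(a,1)$ knight's tour for every even $n\ge a(6a+2)$.

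For the final clause, observe that a structured tour contains, by definition, the edge $\{(i,j),(i+a,j-1)\}$ with $i=0$ and $j=1$ (valid since $a\ge 2$ gives $j=1\le a-1$), and this edge is $\{(0,1),(a,0)\}$, i.e.\ link B. Thus the tour produced above already contains link B, and nothing further is needed.
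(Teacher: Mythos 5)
Your overall route is the paper's own: take the structured tour on $[6a+2]^2$ from Theorem~\ref{t5}, use Lemma~\ref{l2} to obtain structured tours on $[k(6a+2)]^2$ for $k=1,\dots,a$ (these realise every even residue modulo $2a$), and then grow by steps of $2a$ with Lemma~\ref{l1}. Your modular arithmetic showing that every even $n\ge a(6a+2)$ can be written as $k(6a+2)+2al$ with $1\le k\le a$ and $l\ge 0$ is correct and in fact more explicit than the paper's wording, and your identification of link B with the structured edge for $(i,j)=(0,1)$ is exactly right.

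The one step you leave open --- supplying the A-linked tour on $[6a+2]^2$ that the hypothesis of Lemma~\ref{l2} consumes --- is indeed a point that must be discharged (the paper glosses over it as well), but it does not require re-engineering the algorithm of Theorem~\ref{t5}, and your premise that link A ``joins two levels'' is incorrect: its endpoints $(a-1,1)$ and $(a-2,a+1)$ lie in equivalent units, so link A is an edge \emph{inside} a single level. The quick repair is symmetry: link A is the image of the structured edge $\{(1,a-1),(a+1,a-2)\}$ (the case $i=1$, $j=a-1$ with $j$ odd) under reflection in the main diagonal, and the $(a,1)$ knight's graph on $[6a+2]^2$ is invariant under that reflection, so the reflection of the structured tour of Theorem~\ref{t5} is an A-linked tour; since Lemma~\ref{l2} permits the structured tour and the A-linked tour to be different tours, its hypothesis is met. (Alternatively, in the construction of Theorem~\ref{t5} the block $B_{00}$ is an interior vertex of the Hamiltonian block-path of every level, so both of its block-edges, towards $B_{10}$ and towards $B_{01}$, are traversed; for the level containing $(a-1,1)$ the latter corresponds precisely to link A, so that structured tour is itself A-linked.) With this one-line fix your argument is complete and coincides with the paper's proof.
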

		\begin{proof}
			We induct on $n$ based on Lemma \ref{l4}, with the additional condition that our tours are structured.
			
			For the basis, we need structured $(a, 1)$ knight's tours for each value of $n$ modulo $2a$. Theorem \ref{t5} and Lemma \ref{l2} give us the existence of a structured tour for values $n=6a+2,\ 2(6a+2),\ 3(6a+2),\dots a(6a+2)$. Now our inductive steps of length $2a$ (Lemma \ref{l1}) starting from these basic cases cover all sufficiently large natural numbers.
			
			As for link B, it is the edge $\{ (B_{00}, L_{01}), (B_{10}, L_{01})\}$, so it exists in any structured tour. This completes the proof.
		\end{proof}
		As noted before, extending into $d$ dimensions is straightforward in comparison with the work done so far. The only additional structure we require from a 2-dimensional knight's tour is link B.
		\begin{theorem} \label{t8}
			For any even values of $a$ and $n$, with $n \geq a(6a+2)$, there exists an $(a, 1)$ knight's tour in the grid $[n]^d$.
		\end{theorem}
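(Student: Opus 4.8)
The plan is not to induct on the dimension one step at a time, but to peel off a single planar board: write $[n]^d = [n]^2 \times [n]^{d-2}$, so that the case $d=2$ is exactly Corollary \ref{c7} and we may assume $d \ge 3$. Fix once and for all a two-dimensional $(a,1)$ knight's tour $T$ in $[n]^2$ containing link B, which exists by Corollary \ref{c7}. For $w = (w_3,\dots,w_d) \in [n]^{d-2}$ set $S_w = \{(x,y,w_3,\dots,w_d) : (x,y)\in[n]^2\}$; the subgraph of the $(a,1)$ knight's graph on $[n]^d$ induced by $S_w$ is isomorphic to the planar $(a,1)$ knight's graph, since any knight's move fixing all of coordinates $3,\dots,d$ acts on coordinates $1,2$ exactly like a planar knight's move. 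Hence the image $T_w$ of $T$ under $(x,y)\mapsto(x,y,w_3,\dots,w_d)$ is a cycle in the $(a,1)$ knight's graph on $[n]^d$, and the $n^{d-2}$ cycles $\{T_w\}$ partition the vertex set. The whole task is to merge them into one cycle.

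I would do this by sequential concatenation along a Hamiltonian path $w^{(1)},\dots,w^{(N)}$ ($N=n^{d-2}$) of the grid graph on $[n]^{d-2}$, in which consecutive terms differ by a unit vector $\pm e_k$ with $3\le k\le d$; such a path exists in every grid graph. The one extra ingredient is an edge guaranteed to lie in \emph{every} planar tour: in $[n]^2$ the vertex $(0,0)$ has exactly the two neighbours $(a,1)$ and $(1,a)$ (any planar knight's move from the origin must increase two coordinates), so $\gamma = \{(0,0),(a,1)\}$ is a corner edge and in particular lies in $T$. For the junction between $T_{w^{(i)}}$ and $T_{w^{(i+1)}}$, where $w^{(i+1)} = w^{(i)} \pm e_k$, I use as a bridge the link-B edge $\{(a,0,w^{(i)}),(0,1,w^{(i)})\}$ of the $i$-th slice together with the edge $\{(0,0,w^{(i+1)}),(a,1,w^{(i+1)})\}$ of the $(i{+}1)$-st slice. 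This is a legitimate bridge because $(a,0,w^{(i)})$ and $(0,0,w^{(i+1)})$ differ by $\pm a$ in coordinate $1$ and by $\pm 1$ in coordinate $k$, and likewise $(0,1,w^{(i)})$ and $(a,1,w^{(i+1)})$ differ by $\pm a$ in coordinate $1$ and by $\pm 1$ in coordinate $k$ — both are legal $(a,1)$ knight's moves. As in the compound construction of Section~2.1, delete these two edges and insert $\{(a,0,w^{(i)}),(0,0,w^{(i+1)})\}$ and $\{(0,1,w^{(i)}),(a,1,w^{(i+1)})\}$.

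Finally I would check compatibility of the $N-1$ junctions, exactly as in Lemma \ref{l2}: the slice $w^{(i)}$ contributes its link-B edge only to the junction with its successor and its copy of $\gamma$ only to the junction with its predecessor, and since the link-B edge $\{(a,0),(0,1)\}$ and $\gamma=\{(0,0),(a,1)\}$ are disjoint edges of $T$, no edge is used twice (the extreme slices $w^{(1)}$ and $w^{(N)}$ use only one of the two). Thus each junction is a genuine compound of the current cycle with the next fresh cycle, and after all of them we are left with a single cycle through all $n^d$ vertices, i.e.\ an $(a,1)$ knight's tour in $[n]^d$.

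The only real point of care, and the reason the hypothesis must supply link B, is the choice of the two edges forming each bridge. An induction "one dimension at a time'' breaks down from dimension $4$ on, because the corner vertices then have degree larger than $2$, so there is no automatic corner edge and one is forced to reuse a link-B edge at two consecutive junctions. Peeling off a full planar board avoids this: it pairs the \emph{arranged} edge (link B) of one slice with the \emph{automatic} edge $\gamma$ of the next, and the little computation showing that these two specific edges are "offset by $(a,0)$'' in the planar coordinates — and hence bridge unit-shifted slices — is the crux; everything else is the routine bookkeeping already used for sequential concatenation.
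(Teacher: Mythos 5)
Your proposal is correct and follows essentially the same route as the paper: slice $[n]^d$ into planar floors, equip each with a link-B tour from Corollary \ref{c7}, and sequentially concatenate along a Hamiltonian path of the $(d-2)$-dimensional grid graph, bridging each link-B edge with the forced corner edge $\{(0,0),(a,1)\}$ of the next floor. The only difference is that you spell out the disjointness of link B and the corner edge and the legality of the bridging moves, which the paper leaves implicit.
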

		\begin{proof}
			The construction uses sequential concatenation and is thus quite similar to the proof of Lemma \ref{l2}.
			
			Number the points $[n]^d$ by $(x_1,\ x_2, (x_3, \dots x_d))$ to emphasise the idea that the grid is divided into subgraphs called \emph{floors} and labelled by $\mathbf{p}=(x_3, \dots x_d)$, each floor being a copy of $[n]^2$ (i.e.~canonically isomorphic to it).
			
			Construct any Hamiltonian path $\mathbf{p}_1,\ \mathbf{p}_2,\  \dots \mathbf{p}_l$ of the $(d-2)$-dimensional grid graph (i.e.~such that each two neighbouring vertices differ by just a unit vector along any axis, which is denoted by $\mathbf{e}_i$). This will be used as our sequence for sequential concatenation.
			
			On each level $\mathbf{p}_i$, use Corollary \ref{c7} to display an $(a, 1)$ knight's tour containing link B. Consider levels $\mathbf{p}_i$ and $\mathbf{p}_{i+1}$ for any  i. The following edges make up a bridge between them:\\
$\{(a, 0, \mathbf{p}_i), (0, 1, \mathbf{p}_i) \}$  (link B) and\\
$\{(0, 0, \mathbf{p}_{i+1}), (a, 1, \mathbf{p}_{i+1}) \}$ (must exist because the corner of floor $\mathbf{p}_{i+1}$ has degree 2).

For each $i$ starting from 1, use the bridge we just constructed to concatenate $(a, 1)$ knight's tours on levels $\mathbf{p}_i$ and $\mathbf{p}_{i+1}$. The result is an $(a, 1)$ knight's tour in the $d$-dimensional grid, as required.
		\end{proof}
		  We remark that this proof is valid if we only assume that one 2-face of the grid is square and greater than $a(6a+2)$. It can be modified into induction on $d$, which is equivalent to choosing a particular path $\mathbf{p}_1,\ \mathbf{p}_2,\dots \mathbf{p}_l$.
			
			This proof makes concatenation the universal concept in constructing Hamiltonian cycles, used for
				\vspace{-0.5\baselineskip}
				\begin{itemize}\addtolength{\itemsep}{-0.5\baselineskip}
					\item connecting the rim of the board to the middle (Lemma \ref{l1}),
					\item replicating $(a,\ 1)$ knight's tours (Lemma \ref{l2}),
					\item connecting $(a,\ 1)$ knight's tours on different flours for higher-dimensional knight's graphs (Theorem \ref{t8}).
				\end{itemize}	
		
\section{The \texorpdfstring{$(a, b)$}{(a, b)} knight}
	We first generalise the proof of Theorem \ref{t8}, which enables us to extend 2-dimensional $(a, b)$ knight's tours into $d$ dimensions. Lemma \ref{l9} iterates the argument we used to prove Theorem \ref{t8} twice, so it is still convenient to prove the two results separately.
			
	\subsection{Extending the \texorpdfstring{$(a, b)$}{(a, b)} knight's tour into \texorpdfstring{$d$}{d} dimensions}
			Under the assumption that an $(a, b)$ knight's tour exists in two dimensions, we divide $[n]^d$ into 2-dimensional floors as in the proof of Theorem \ref{t8}. Our aim is to connect all the levels, by which we mean concatenating the corresponding knight's tours. 
			
			Link B needs to be replaced by two links: link $\alpha$  is given by $\{(0, b), (a, 0)\}$, and link $\beta$ by $\{(n-1, a), (n-b-1, 0)\}$. Link $\alpha$ is used to connect floors $\mathbf{p}$ and $\mathbf{p}^\prime$ displaced by $a\mathbf{e}_i$. The following edges form a bridge between them:\\
			 $\{(0, 0, \mathbf{p}), (a, b, \mathbf{p})\}$ and\\
			 $\{(0, b, \mathbf{p}^\prime), (a, 0, \mathbf{p}^\prime)\}$.\\
			Note that link $\alpha$ is used up only at level $\mathbf{p}^\prime$, which allows for sequential concatenation that was already used in Lemma \ref{l2} and Theorem \ref{t8}.
			
			Equivalently, we use link $\beta$ to connect levels that differ by $b\mathbf{e}_i$. A 2-dimensional $(a, b)$ knight's tour containing both links is called a \emph{linked} knight's tour.
			
			Since $a$ and $b$ are assumed to be coprime, between any two floors there is a path which consists of steps of length $a$ and $b$ along the principal axes. Lemma \ref{l9} gives a way of connecting the floors in an orderly way using the idea of sequential concatenation.
			\begin{lemma} \label{l9}
				Let there exist a linked $(a, b)$ knight's tour in $[n]^2$, with $n \geq a+b$. Then for any $d$, the grid $[n]^d$ admits a knight's tour.
			\end{lemma}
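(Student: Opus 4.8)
The plan is to generalise the proof of Theorem~\ref{t8}. Slice $[n]^d$ into two-dimensional \emph{floors}, each a copy of $[n]^2$ and indexed by $\mathbf{p}=(x_3,\dots,x_d)\in[n]^{d-2}$, and put a linked $(a,b)$ knight's tour $C_{\mathbf{p}}$ on each of them, which exists by hypothesis. The aim is to concatenate all the cycles $C_{\mathbf{p}}$, one floor at a time, into a single Hamiltonian cycle of $[n]^d$, the order of processing being dictated by a Hamiltonian path of a small auxiliary graph on floor-indices; this is the same device of \emph{sequential concatenation} used in Lemma~\ref{l2} and Theorem~\ref{t8}. (For $d\le 2$ there is nothing to do.)

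Let $Q$ be the graph on vertex set $[n]$ with $i\sim j$ whenever $|i-j|\in\{a,b\}$, and let $H=Q^{\,\square(d-2)}$ be its $(d-2)$-fold Cartesian power; equivalently, $H$ has vertex set $[n]^{d-2}$, and $\mathbf{p}\sim\mathbf{q}$ in $H$ exactly when $\mathbf{p}$ and $\mathbf{q}$ differ by $\pm a\mathbf{e}_i$ or $\pm b\mathbf{e}_i$ for some axis $i\in\{3,\dots,d\}$ --- each such edge records the existence of genuine knight moves between the two corresponding floors. The first step is to show that $H$ has a Hamiltonian path. Since the Cartesian product of two graphs each possessing a Hamiltonian path again has one (traverse the product in boustrophedon fashion: run along a full Hamiltonian path of the second factor, step once along the first, run back, and repeat), it suffices to produce a Hamiltonian path of $Q$. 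This is where $\gcd(a,b)=1$ is used: partition $[n]$ into consecutive blocks of length $a+b$ and run through each block by repeatedly adding $b$ modulo $a+b$ --- every step is then a jump by $+b$ or by $-a$, hence an edge of $Q$, and as $\gcd(b,a+b)=\gcd(a,b)=1$ this enumerates all $a+b$ residues of the block and ends exactly $a$ above its start, so a single $+b$ step moves on to the next block. The only delicate point is the final, possibly shorter, block, which can be handled directly when $n\ge a+b$.

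Next, fix a Hamiltonian path $\mathbf{p}_1,\dots,\mathbf{p}_l$ of $H$ and concatenate sequentially: at the $i$-th stage, join the cycle built so far to $C_{\mathbf{p}_{i+1}}$ across the $H$-edge $\{\mathbf{p}_i,\mathbf{p}_{i+1}\}$. If that edge is an $a$-step, use link $\alpha$ of the \emph{earlier} floor $\mathbf{p}_i$ together with the corner edge $\{(0,0),(a,b)\}$ of the \emph{later} floor $\mathbf{p}_{i+1}$ as a bridge; if it is a $b$-step, use link $\beta$ of $\mathbf{p}_i$ together with the corner edge $\{(n-1,0),(n-1-b,a)\}$ of $\mathbf{p}_{i+1}$. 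The one new ingredient beyond Theorem~\ref{t8} is that such a bridge is legal regardless of whether the coordinate being changed goes up or down: for $c\in\{a,b\}$, a two-line displacement check shows that both ``link in $\mathbf{p}$, corner in $\mathbf{p}+c\mathbf{e}_i$'' and ``corner in $\mathbf{p}$, link in $\mathbf{p}+c\mathbf{e}_i$'' constitute valid bridges, so the link can always be placed in the earlier endpoint. Hence each floor surrenders at most one link (to its successor) and at most one corner edge (to its predecessor) --- the first floor only a link, the last only a corner edge --- and since link $\alpha$, link $\beta$, the corner edge at $(0,0)$ and the corner edge at $(n-1,0)$ of a floor are four distinct edges, the two corner edges being forced in every Hamiltonian cycle whenever $n>a,b$, every edge we remove is still present when it is needed. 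The result of the $l-1$ concatenations is one cycle through all of $[n]^d$.

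I expect the Hamiltonian path of $Q$ (hence of $H$) to be the main obstacle; granted it, the remainder is the bookkeeping already present in Theorem~\ref{t8} plus the direction-independence of the bridges, which is routine. A lesser point is to confirm that $n\ge a+b$ genuinely suffices, both for the short last block of $Q$ and for the corner vertices of each floor to have degree $2$. One could equivalently organise the argument as ``two iterations of Theorem~\ref{t8}'': first use link $\alpha$ to concatenate, within each residue class of $\mathbb{Z}^{d-2}$ modulo $a$, the floors joined by $a$-steps, then use link $\beta$ and $b$-steps to concatenate the cycles obtained from the different residue classes.
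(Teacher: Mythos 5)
Your overall architecture (floors indexed by $[n]^{d-2}$, sequential concatenation along a Hamiltonian path of an auxiliary graph, link $\alpha$/$\beta$ paired with the forced corner edges, the direction-independence check) is sound and matches the spirit of Theorem~\ref{t8}. The genuine gap is the step you yourself flag as the main obstacle: the Hamiltonian path in the jump graph $Q$ on $[n]$ with steps $a$ and $b$. Your block construction proves traceability of $Q$ only when $(a+b)\mid n$, and the hypothesis of the lemma gives no such divisibility (it gives $n$ even, $n\ge a+b$, with $a+b$ odd, so a nonzero remainder is the typical case). Moreover the leftover block cannot be ``handled directly'' by continuing your pattern: take $a=5$, $b=2$, $n=10$. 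Your scheme produces $0,2,4,6,1,3,5$, is then forced to $7$ (the only unvisited neighbour of $5$), then forced to $9$, and is stuck with $8$ stranded. A Hamiltonian path of $Q$ does exist there (e.g.\ $8,3,1,6,4,2,0,5,7,9$), but it is not a local repair of your path; in general the assertion that $Q$ (hence $H=Q^{\,\square(d-2)}$) is traceable for every $n\ge a+b$ with $\gcd(a,b)=1$ is left unproven, and it is exactly the load-bearing step of your argument.

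Your closing remark is not merely an equivalent reorganisation --- it is the fix, and it is the paper's actual proof. The paper never needs traceability of $Q$: it first concatenates, within each congruence class of $[n]^{d-2}$ modulo $a$, the floors joined by $a$-steps (each class is a grid graph, so trivially traceable), obtaining one cycle $T_{\bar{\mathbf{p}}}$ per class; it then concatenates these $a^{d-2}$ cycles along a Hamiltonian path of the grid graph on $\{0,b,\dots,(a-1)b\}^{d-2}$ with $b$-step edges, using link $\beta$ between the floors $r(\mathbf{p}_i)$ and $r(\mathbf{p}_i)+(\mathbf{p}_{i+1}-\mathbf{p}_i)$, which exist precisely because $n\ge a+b$. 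Coprimality enters only to ensure each residue class meets $\{0,b,\dots,(a-1)b\}^{d-2}$ exactly once, and the two links live in different corners so the two stages do not interfere. So either adopt that two-stage decomposition outright, or supply a genuine proof that $Q$ is traceable for all $n\ge a+b$; as written, your main argument does not go through.
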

			\begin{proof}
				As before, display an $(a, b)$ knight's tour on each 2-dimensional floor.
				
				We extend the concept of congruence modulo $a$ to $\varmathbb{Z}^{d-2}$ and denote the congruence class of $\mathbf{p}$ by $\bar{\mathbf{p}}$. Furthermore, let $r(\mathbf{p})$ be the residue of $\mathbf{p}$ modulo $a$, defined by 
				$$ r(\mathbf{p})  \in   \bar{ \mathbf{p} }  \cap \{0,\ 1, \dots a-1\}^{d-2}. $$
				For any floor $\mathbf{p}$, the graph with vertices from $\left[ n \right] ^{d-2} \cap \bar{ \mathbf{p} } $ and edges of form $a\mathbf{e}_i$ is a grid graph, so it admits a Hamiltonian path $P$. Using link $\alpha$, we can sequentially concatenate the $(a, b)$ knight's tours along $P$, exactly as in Theorem \ref{t8}.
				
				The result is a set of vertex-disjoint cycles $\{T_{ \bar{ \mathbf{p} }}  \mid \mathbf{p} \in \{0,\ 1, \dots a-1\}^{d-2} \}$, where each cycle is a tour in the corresponding congruence class. To concatenate these cycles, we follow yet another Hamiltonian path in a grid graph. This time the vertices are $\{0, b,  \dots b(a-1)\}^{d-2}$ and the edges are of form $b\mathbf{e}_i$. We represent the Hamiltonian path by a sequence of vertices $\mathbf{p}_1,\ \mathbf{p}_2,\dots \mathbf{p}_l$.
				
				We use the bridge between levels $r(\mathbf{p}_i)$   and $r(\mathbf{p}_i)+(\mathbf{p}_{i+1} - \mathbf{p}_i)$  to concatenate cycles $T_{ \bar{ \mathbf{p}}_{i+1} }$ and $T_{ \bar{ \mathbf{p}}_{i} }$. To show that this is possible and the result is an $(a, b)$ knight's tour, recall that
				\begin{itemize}	\itemsep-1pt
					\item $a$ and $b$ are coprime since the 2-dimensional $(a, b)$ knight's graph admits a Hamiltonan cycle. It follows that each congruence class modulo $a$ contains exactly one member of $\{0, b,  \dots b(a-1)\}^{d-2}$, so $\bar{ \mathbf{p}}_1 ,\ \bar{ \mathbf{p}}_2 ,\dots \bar{ \mathbf{p}}_l $ is indeed a Hamiltonian path of congruence classes.
					\item Floors $r(\mathbf{p}_i )$    and  $r(\mathbf{p}_i)+(\mathbf{p}_{i+1} - \mathbf{p}_i)$ differ by $b\mathbf{e}_i$, so they can be connected using link $\beta$. Floor $r(\mathbf{p}_i)+(\mathbf{p}_{i+1} - \mathbf{p}_i)$ exists by the condition $n \geq a+b$.
					\item The fact that we previously used up link $\alpha$ does not interfere with the existence of link $\beta$ because they are defined to lie in different corners.
				\end{itemize}
			\end{proof}

				Link $\alpha$ and link $\beta$ can be modified if we find a particular 2-dimensional $(a, b)$ knight's tour that does not contain them. The bridge\\
			 $\{(0, 0, \mathbf{p}), (a, b, \mathbf{p})\}$ and\\
			 $\{(0, b, \mathbf{p}^\prime), (a, 0, \mathbf{p}^\prime)\}$\\
				and its symmetric images are particularly convenient because they work for both $\mathbf{p}-\mathbf{p}^\prime =  a\mathbf{e}_i$ and (with reordering the vertices) $\mathbf{p}-\mathbf{p}^\prime = b\mathbf{e}_i$.
		
		\subsection{\texorpdfstring{The $(2, 3)$}{(2, 3)} knight}
			For an $(a, b)$ knight with $a$ and $b$ both greater than 1, we do not have a partition into blocks and levels analogous to Figure \ref{g4}. This means that for now, basic cases are constructed using heuristic computer programs and do not possess any structure except that dictated by the corners.
			
			We do use analogues of Lemma \ref{l1} and Lemma \ref{l2} for the inductive step, but have to work harder to obtain a suitable colouring of the rim.
			\paragraph{Basic case}
				The figure below shows a $(2, 3)$ knight's tour for $n=10$, generated by a computer program. It is easy to show (using restrictions on vertices near the corner) that no $(2, 3)$ knight's tour exists in smaller chessboards.
				\begin{center}
					\includegraphics[scale=0.7]{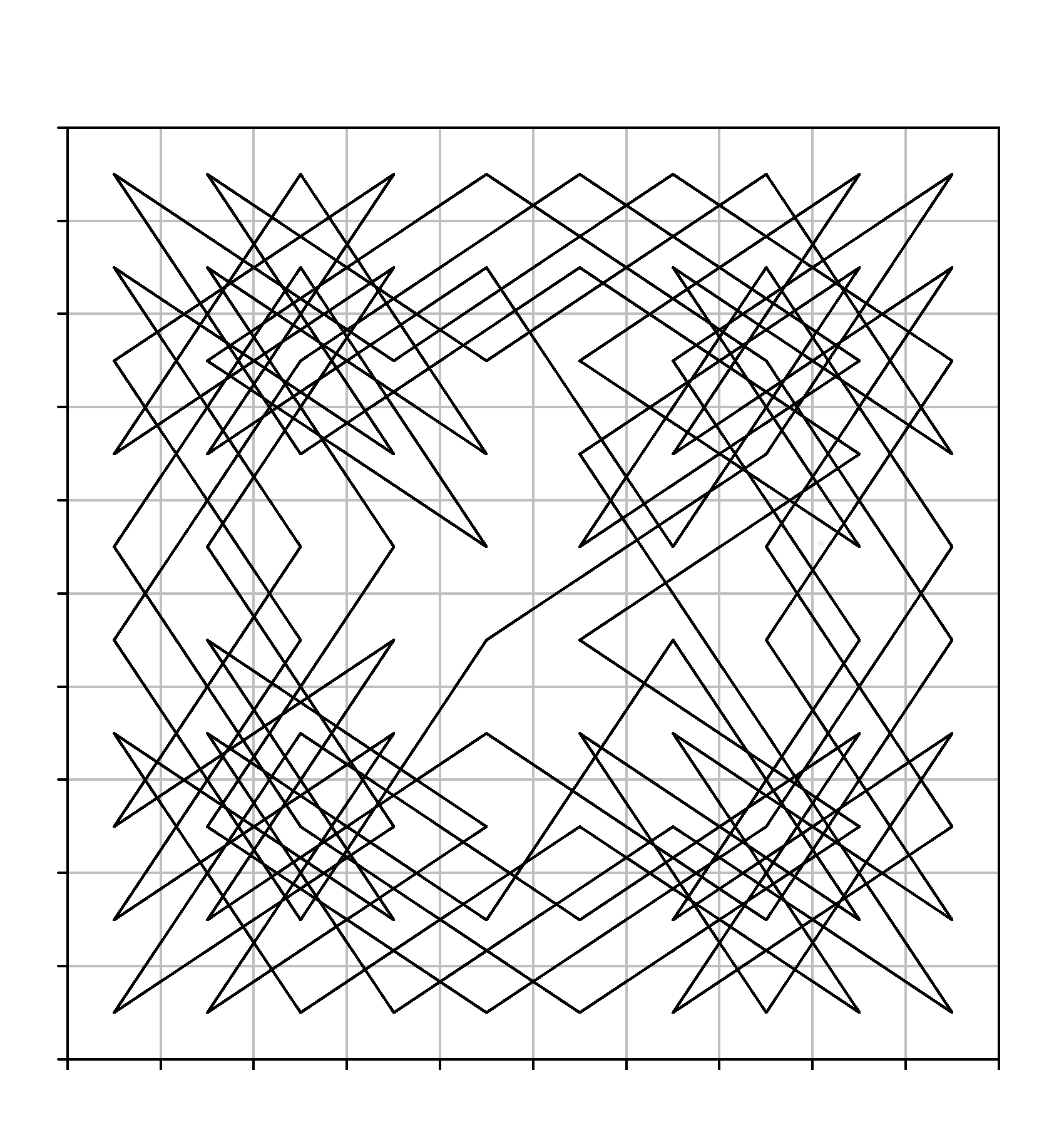}
				\end{center}
			\paragraph{Rim of width 6}
				
				There is a natural colouring of a straight band of width 6 into 12 colours, with each colour following a knight's path. It is based, as before, on translating blocks, but they are now rectangular\footnote{Recall, the side length of the original board being even is a necessary condition for the existence of an $(a, b)$ knight's tour.} (as opposed to squares from the previous section).
			\begin{figure}[!hp]
				\centering
				\includegraphics[scale=.8]{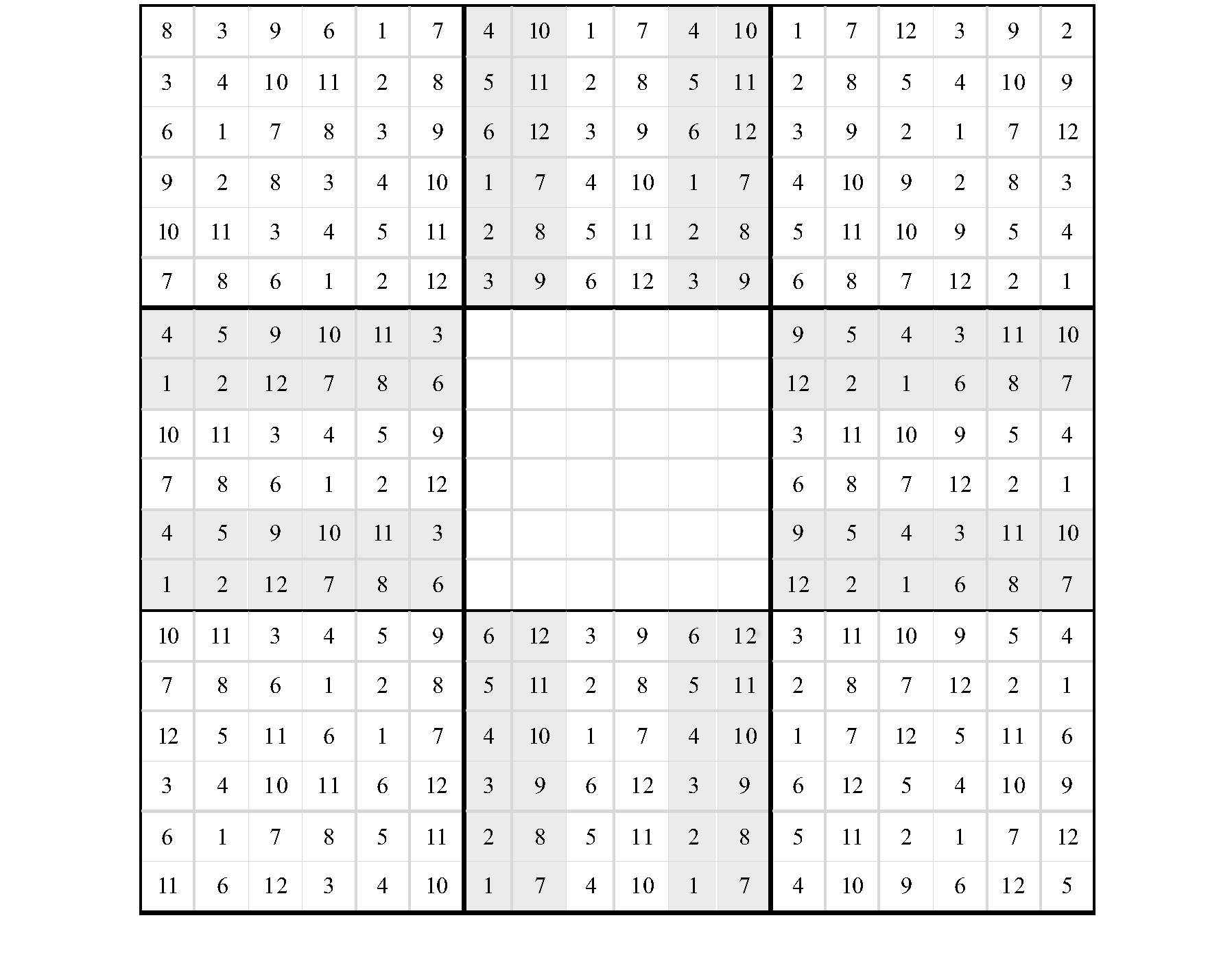}
				\caption{Cycles in the rim of $[n]^2$, for $n \equiv 2 \left( \text{mod }  4 \right)$. The translating blocks are now $2 \times 6$ rectangles. We refer to sections of the board as \emph{corners} (squares of side length 6), \emph{bands} (straight sections of the rim tiled by blocks) and the \emph{middle}.\\
As for the $(a, 1)$ knight, colours are numbered based on the bottom left block.
} \label{g11}
			\end{figure}
				
				This implies that the colouring has a pattern as long as we move horizontally between the blocks, but the path of each colour around a corner is not initially imposed. The corners below are simply a colouring of the square of side length 6 made up so that all 12 colours can `emerge' on either side of the square. We now show that this gives a partition of the rim into 12 cyclic graphs.
			
			\begin{figure}[!ht]
				\centering
				\includegraphics[scale=.8]{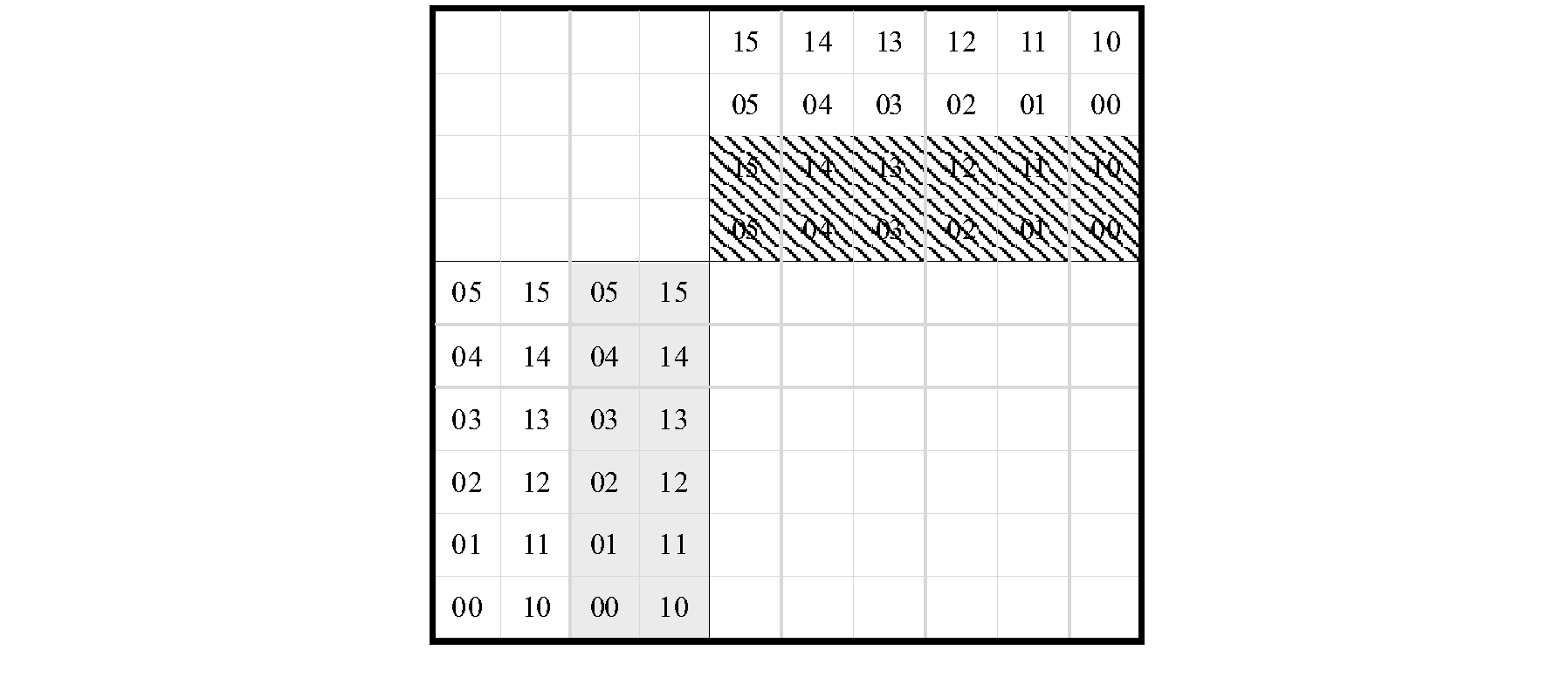}
				\caption{Block positions illustrated in four blocks near the bottom right corner. The block-position of each colour in the gray block determines how it behaves around the corner. Two highlighted blocks induce a permutation of colours.
} \label{g12}
			\end{figure}

					Annotate each rectangular block internally as in Figure \ref{g12}. Given a certain block, we assign \emph{block-positions}\footnote{Block-position is a concept similar to unit-position defined in Section 2.2} to each colour depending on which square the colour occupies. Define the permutation $f: \{1,\ 2, \dots 12\} \longmapsto  \{1,\ 2, \dots 12\}$ by $f(i)=j$ if colour $j$ in the lined block (i.e. after the corner) has the same block-position as colour $i$ in the gray block (before the corner).

			For $n \equiv 2 \left( \text{mod }  4 \right)$ necessary and sufficient condition for each path to come back to its original block-position is that $f^4=id$. This is because the number of blocks in each band is odd (e.g.~3 blocks in each band in Figure \ref{g11}), and so block-position of each colour is unchanged after traversing the band. For the bottom right corner shown in Figure \ref{g12}, $f = (1\, 4\, 10\, 7)  (2\, 5\, 11\, 8)(3\, 9\, 12\, 6)$ , so the condition $f^4 = id$ is satisfied.
			
			If the number of blocks in each band is even ($n \equiv 0 \left( \text{mod }  4 \right)$), the colours swap block-positions in pairs as they propagate through the band. So our consistency condition is altered to \\ $\left[ (1\, 4)(2\, 5)(3\, 6)(7\, 10)(8\, 11)(9\, 12)f \right]^4 = id$, which is also true. Denote the cycle formed by colour $i$ by $C_i$.
			
			We are now ready to prove the equivalent of Lemma \ref{l2}. Notice that we now abandon the strategy of connecting each cycle to the middle separately. The trick in Lemma \ref{l10} is to keep creating the edges we need for the next bridge. It is worth noting that this version requires no additional structure for the initial (2, 3) knight's tour.
			\begin{lemma} \label{l10}
				If $[n]^2$ admits a $(2, 3)$ knight's tour, then so does one of side length $[n+12]^2$.
			\end{lemma}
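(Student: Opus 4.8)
The strategy mirrors the proof of Lemma~\ref{l1}. The middle of $[n+12]^2$ (the complement of the rim of width~$6$) is, by translation, canonically isomorphic to the $(2,3)$ knight's graph on $[n]^2$, so by hypothesis we may put a $(2,3)$ knight's tour $C$ on it. Colour the rim of width~$6$ as described above, obtaining the twelve vertex-disjoint cycles $C_1,\dots,C_{12}$; since $n$ is even and $n+12\equiv n\pmod{4}$, the board $[n+12]^2$ falls into one of the two cases treated above, and the relevant consistency condition ($f^4=id$ when $n\equiv 2\pmod{4}$, resp.\ its analogue when $n\equiv 0\pmod{4}$) has already been verified, so this decomposition is legitimate. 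It then remains to concatenate $C,C_1,\dots,C_{12}$ into a single Hamiltonian cycle, and, unlike in Lemma~\ref{l1}, I do not expect to be able to bridge each $C_i$ to $C$ by a separate, globally disjoint bridge; instead the rim cycles will be absorbed one at a time.

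Concretely, I would work near one fixed corner of the board, say the bottom-left. The corner vertices of the middle have degree~$2$ in the $(2,3)$ knight's graph, so $C$ is forced to contain the corresponding corner edges, and I would take one such forced edge, call it $d_0$, as one of the two edges of a bridge whose other edge lies on $C_1$. (The two required knight-move pairs $A_1\sim A_2$, $B_1\sim B_2$ can be written down explicitly, using that a vertex of the middle near the corner has neighbours in the rim.) Forming the compound $C\cup C_1$ deletes $d_0$ and that edge of $C_1$ and introduces the two new edges $A_1A_2$ and $B_1B_2$. The crux of the argument is the claim that one of these freshly created edges, call it $d_1$, again serves as half of a bridge with a suitable edge of $C_2$; absorbing $C_2$ creates $d_2$, which pairs with an edge of $C_3$, and so on round the rim, until all of $C_1,\dots,C_{12}$ have been merged into the tour. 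Since at each step we delete only the two edges of the current bridge, one of which is precisely the edge manufactured at the previous step, nothing created earlier is destroyed, and the process runs to completion, yielding a $(2,3)$ knight's tour in $[n+12]^2$.

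The main obstacle is the finite but fiddly verification that this chain of bridges actually exists: one must track, block by block along the inner edge of the rim near the chosen corner, the block-position of each colour, and check that after $C_i$ has been absorbed via $d_{i-1}$ the newly created edge $d_i$ is a legal knight's move away, at both of its endpoints, from an edge of $C_{i+1}$. This is exactly what Figures~\ref{g11} and~\ref{g12} and the explicit permutation $f$ make possible: they determine the cyclic order in which the twelve colours emerge from the corner, and hence a workable enumeration $C_1,\dots,C_{12}$ together with the list of edges $d_0,d_1,\dots,d_{11}$ used at successive steps. Because Lemma~\ref{l10} only asserts existence, with no structure or links to be preserved, once this chain is in place the proof is complete.
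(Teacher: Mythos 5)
Your setup (middle isomorphic to $[n]^2$, twelve rim cycles $C_1,\dots,C_{12}$ from the colouring, consistency via $f$) matches the paper, and you have correctly identified the paper's own guiding trick, namely that a concatenation step manufactures a new edge that can serve as half of the next bridge. But the heart of the lemma is precisely the explicit verification that a complete scheme of such bridges exists, and this is the part you leave as an assertion: ``the list of edges $d_0,d_1,\dots,d_{11}$'' is claimed to exist because the figures ``make [it] possible,'' without exhibiting a single concrete bridge or checking adjacency at both endpoints. That is a plan, not a proof; for this lemma the content is exactly that finite check.

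Moreover, the specific scheme you sketch --- a single chain starting from a forced corner edge of the middle and absorbing $C_1,\dots,C_{12}$ one at a time ``round the rim'' near one fixed corner --- does not match how the construction actually has to go, and there is no reason to believe it works in that simple form. The paper's proof is not a chain hanging off the middle: it first merges $C_1,C_4,C_9,C_{12}$ into one cluster and $C_3,C_6,C_7,C_{10}$ into another using bridges near the bottom-left corner (Figure~\ref{g13}), then appends $C_2$, $C_5$, $C_{11}$ using the corner structure at the \emph{bottom-right} corner (Figure~\ref{g14}) precisely because ``the equivalent cannot be done with 2, 5, 8 and 11,'' then bridges each cluster to the middle via a forced edge of the middle tour, and only then can it reach the leftover colour $C_8$, using an edge (e.g.\ $\{(8,3),(6,6)\}$) that exists only because of the immediately preceding concatenation. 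So some rim colours are not absorbable in the uniform way your chain presupposes, and $C_8$ in particular needs a bridge that is only created after the middle has been attached. Your proposal would need either this explicit case analysis or a genuinely different verified scheme; as written, the gap is exactly where the lemma's difficulty lies.
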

			\begin{proof}
				As before, given the board $[n+12]^2$, construct the rim of width $6$ and use the assumption to form a $(2, 3)$ knight's tour in the middle.
				
	We first concatenate the cycles $C_1$, $C_4$, $C_9$ and $C_{12}$ using the bridges near the bottom left corner as shown in Figure \ref{g13}. Call the resulting cycle \emph{cluster 1}. Do the same with cycles $C_3$, $C_6$, $C_7$ and $C_{10}$ to form \emph{cluster 2}.
				
			\begin{figure}[!ht]
				\centering
				\includegraphics[scale=.8]{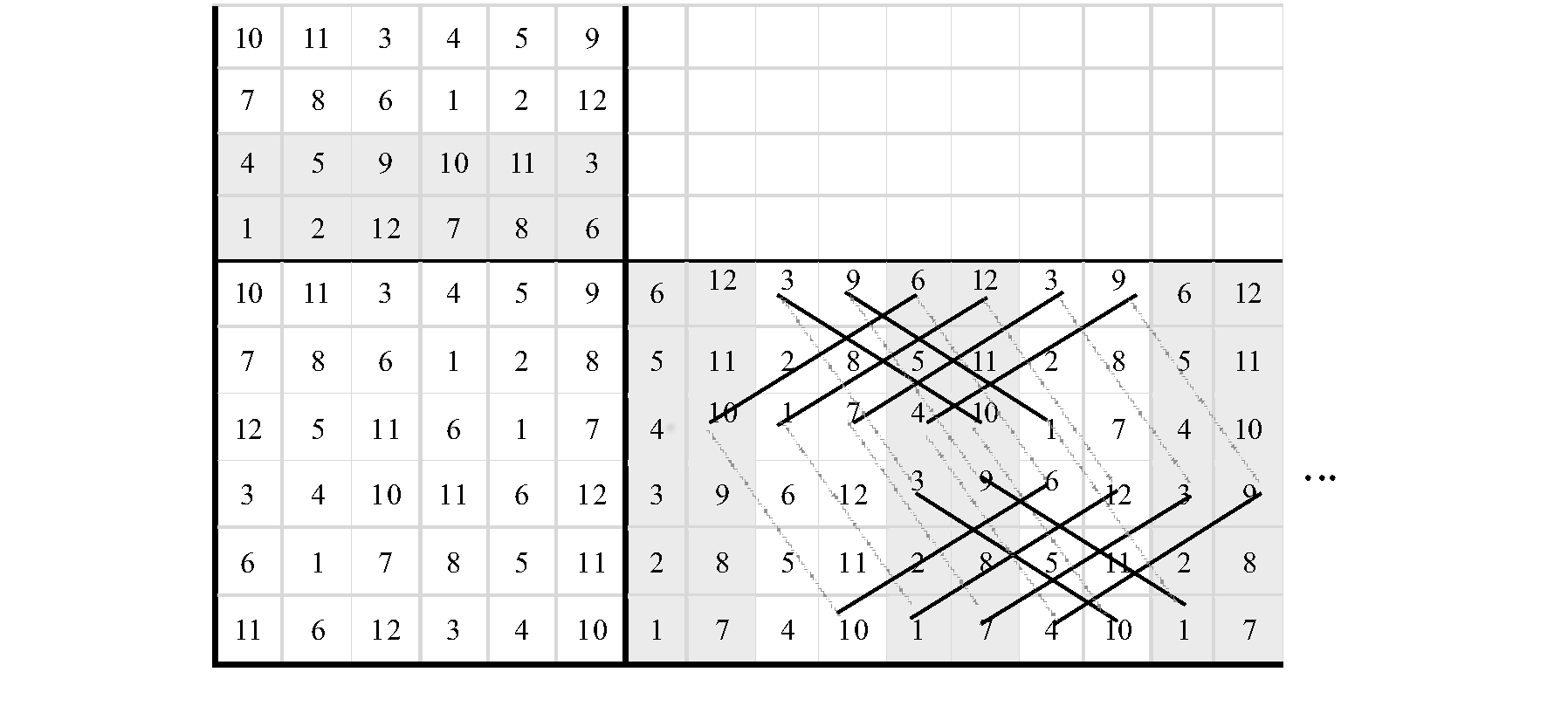}
				\caption{Concatenating cycles to construct clusters 1 and 2 - the dotted edges form bridges which are replaced by black edges in the process.\\
Regardless of the size of the board, we always use the first five blocks (in positive direction) for the concatenation. The figure only shows the relevant section of the chessboard.
} \label{g13}
			\end{figure}
			\begin{figure}[!htb]
				\centering
				\includegraphics[scale=.8]{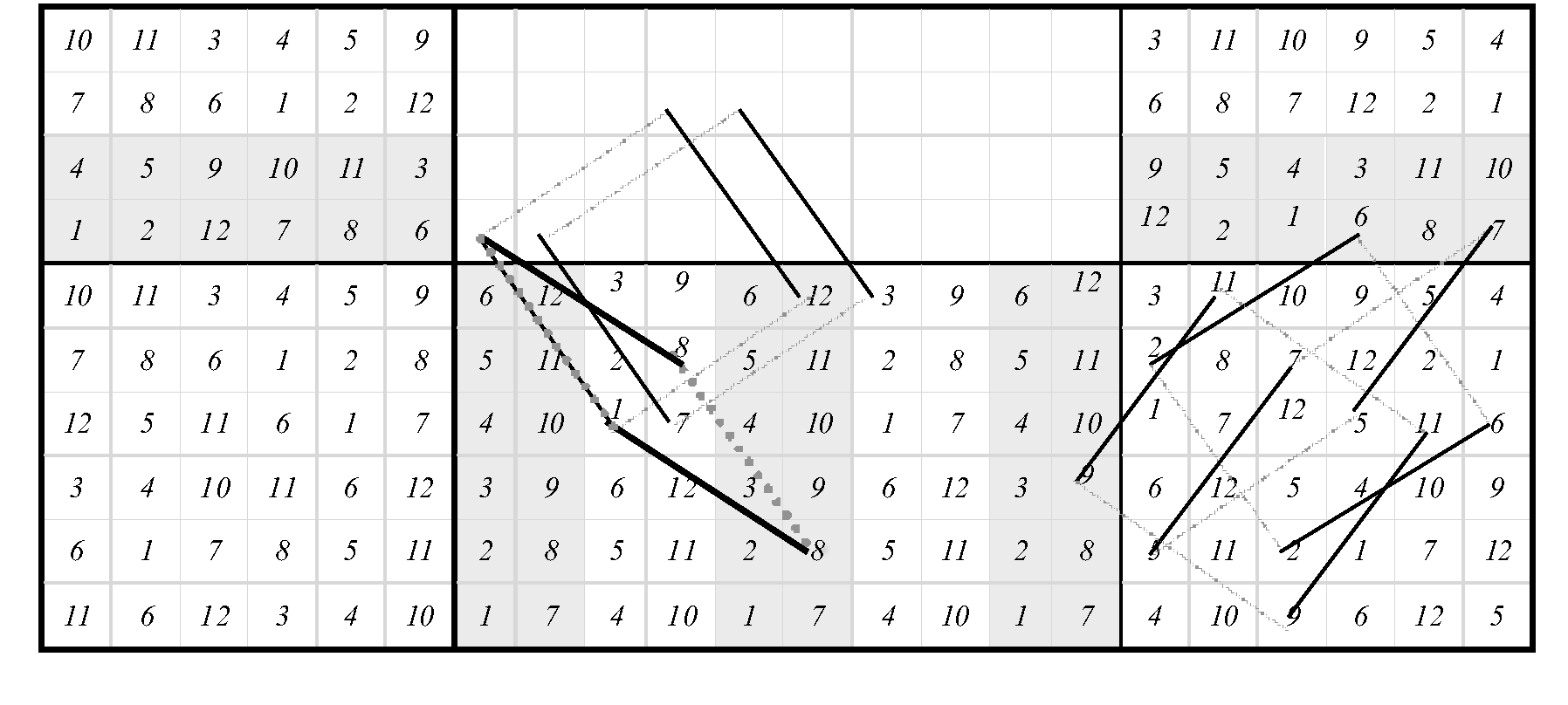}
				\caption{Appending cycles $C_2$, $C_5$ and $C_{11}$ to clusters in the bottom right corner.\\
	    Concatenating clusters with the middle in bottom left corner. The edge $\{(9, 4), (7, 7)\}$ is then used up to connect $C_8$ to the rest of the board (thick edges).
} \label{g14}
			\end{figure}
			The equivalent cannot be done with 2, 5, 8 and 11, but we can use the corner structure to add $C_2$ to cluster 2 (Fig. \ref{g14}). We do the same with cycles $C_5$ and $C_{11}$ (the reason why the solution is structured in this way is given below). We have to emphasise that this concatenation happens in the bottom right corner regardless of the size of the board.

			Now use the following bridge to connect cluster 1 to the middle:\\
			$\{(8, 3), (11, 5)\}$ (exists from concatenation of $C_1$ and $C_{12}$), and\\
			$\{(6, 6), (9, 8)\}$ (must exist in the $(2, 3)$ knight's tour in the middle).
			
			For cluster 2, the bridge is formed by the same mechanism: \\
			$\{(9, 3), (12, 5)\}$ and\\
			$\{(7, 6), (10, 8)\}.$
			
		We now have two vertex-disjoint cycles covering the chessboard - a small one given just by colour 8, and a large one covering the rest. One of the edges we just created, $\{(8, 3), (6, 6)\}$ can be used along with $\{(11, 1), (9, 4)\}$ (dotted gray lines) as a bridge between two cycles, giving us the final solution.
			\end{proof}
		
		\begin{corollary}
				For sufficiently large even $n$ and $d\geq 2$, there is a $(2, 3)$ knight's tour in $[n]^d$.
		\end{corollary}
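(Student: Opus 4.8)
The plan is to mimic the endgame of Section 2, where a 2-dimensional $(a,1)$ knight's tour was promoted to all dimensions via Theorem~\ref{t8}. Here the analogue of Theorem~\ref{t8} is already available: it is Lemma~\ref{l9}, which says that a \emph{linked} $(2,3)$ knight's tour in $[n]^2$ (one containing both link $\alpha = \{(0,3),(2,0)\}$ and link $\beta = \{(n-1,2),(n-4,0)\}$) yields a $(2,3)$ knight's tour in $[n]^d$ for every $d$. So the whole corollary reduces to producing, for all sufficiently large even $n$, a $(2,3)$ knight's tour in $[n]^2$ that is linked. First I would invoke Lemma~\ref{l10} together with the basic case $n=10$: iterating the $+12$ inductive step from $n=10$ gives $(2,3)$ knight's tours in $[n]^2$ for every $n \equiv 10 \pmod{12}$ with $n \ge 10$. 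That is not yet every even $n$, so I would also need basic cases in the other residue classes of even numbers modulo $12$ — that is, computer-generated $(2,3)$ knight's tours for $n = 12, 14, 16, 18, 20, 22$ (the residues $0,2,4,\dots,10 \bmod 12$, excluding $10$ which is handled) — and apply Lemma~\ref{l10} to each. Combining these six (or however many are needed) seed boards with the $+12$ step covers all sufficiently large even $n$.

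The remaining issue is the two links. Link $\alpha$ and link $\beta$ are both \emph{corner} edges: $\{(0,3),(2,0)\}$ sits in the bottom-left corner and $\{(n-1,2),(n-4,0)\}$ in the bottom-right corner. By the discussion following Lemma~\ref{l9}, the bridge used there is symmetric and only needs edges in the bottom-left and bottom-right corners, so one does not strictly need exactly those two edges — any pair of edges realising the corner structure in the two corners will do, and by the remark after Lemma~\ref{l9} link $\alpha$ and link $\beta$ may be replaced by other fixed corner edges as long as the knight's tour contains them. I would therefore either (a) check directly that the construction in Lemma~\ref{l10} can be arranged to retain fixed edges in both relevant corners — noting that Lemma~\ref{l10} already does all its surgery in the \emph{bottom-left} and \emph{bottom-right} corners, which is precisely where the links live, so the concatenation can be steered to keep the needed edges — or (b) hard-code the links into the six small computer-generated basic cases and verify that the $+12$ inductive step preserves them (again because Lemma~\ref{l10} only modifies a bounded neighbourhood of those two corners). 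In both approaches one must confirm that the edges deleted during the bridging in Lemma~\ref{l10} are disjoint from link $\alpha$ and link $\beta$.

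The main obstacle is exactly this bookkeeping: ensuring that the cluster-forming and cluster-to-middle concatenations of Lemma~\ref{l10}, which are stated as happening ``in the bottom-left corner'' and ``in the bottom-right corner'', can be performed without consuming the specific edges $\{(0,3),(2,0)\}$ and $\{(n-1,2),(n-4,0)\}$, and that the seed boards can be chosen with these edges present. This is not conceptually hard — it is the same flavour of argument as ``link B exists in any structured tour'' in Corollary~\ref{c7} — but it does require tracing through Figures~\ref{g13} and~\ref{g14} edge by edge. Once the linked 2-dimensional tours are in hand for all large even $n$, Lemma~\ref{l9} immediately delivers the $d$-dimensional tours and the corollary follows.
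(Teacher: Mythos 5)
Your route is essentially the paper's: reduce to producing \emph{linked} two-dimensional $(2,3)$ knight's tours for all large even $n$ (basic cases plus the $+12$ step of Lemma~\ref{l10}) and then apply Lemma~\ref{l9}. The paper settles the link bookkeeping you leave open simply by observing (Figure~\ref{g12}) that link $\alpha$ and link $\beta$ occur as edges of the corner turns of the rim colouring and are not among the edges deleted in the concatenation, so every tour produced by the inductive step is automatically linked; your point that further computer-generated seed boards are needed for the even residues other than $10 \bmod 12$ matches the paper's tacit reliance on such basic cases.
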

			\begin{proof}
				Refer to Figure \ref{g12}. Link $\alpha$ and link $\beta$ exist as part of the turn (corner square of side length 6), so we can apply Lemma \ref{l9} to our (2, 3) knight's tour in $[n]^2$.
			\end{proof}
			
\newpage

		\subsection{\texorpdfstring{The $(2, 5)$}{(2, 5)} knight}
			We use a completely different method for extending (2, 5) knight's tours. It produces more `localised' (2, 5) knight's tours, similar to those found in references, but we still rely on sequential concatenation as the key principle.
			\subsubsection{Assembling rectangular blocks}
				This solution is included in the paper because it uses a new idea of assembling rectangular boards, $[m]\times [n]$. Lemma \ref{l13} reduces the $(a, b)$ knight's tour problem to finding certain `small' knight's tours.

				We consider an $(a, b)$ knight's graph in $[m]\times [n]$. Let $a>b$. We call the edge $\{(a-1, b), (a-b-1, a+b)\}$ and all its symmetric images \emph{link H}. Similarly, $\{(b, a-1), (a+b, a-b-1 )\}$ is called \emph{link V}. For $m = n$, these two edges coincide under reflection in the main diagonal.
				\begin{lemma} \label{l12}
				  Assume that there are $(a, b)$ knight's tours of  $[m_1] \times [n]$ and $ [m_2] \times [n]$ containing link H and link V. Let $\gcd(m_1,   m_2 )=2$. For a sufficiently large even number $k$, the $[k] \times [n]$ admits an $(a, b)$ knight's tour. In addition, this tour can be chosen to contain link V.
				\end{lemma}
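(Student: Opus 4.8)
The plan is to imitate the sequential concatenation of Lemma~\ref{l2}, laying copies of $[m_1]\times[n]$ and $[m_2]\times[n]$ side by side as vertical strips. First I would reduce to a tiling fact: write $m_1=2m_1'$, $m_2=2m_2'$, so that $\gcd(m_1,m_2)=2$ gives $\gcd(m_1',m_2')=1$; then, since $\gcd(m_1',m_2')=1$, every integer at least $(m_1'-1)(m_2'-1)$ is a non-negative integer combination of $m_1'$ and $m_2'$ (the Chicken McNugget theorem), so every even $k\geq 2(m_1'-1)(m_2'-1)$ can be written $k=p\,m_1+q\,m_2$ with $p,q\geq 0$. For such $k$, partition $[k]\times[n]$ into $t=p+q$ consecutive vertical strips $S_1,\dots,S_t$ ($p$ copies of $[m_1]\times[n]$ and $q$ copies of $[m_2]\times[n]$, in any order), and put the assumed $(a,b)$ knight's tour on each strip in its standard orientation, so that link~H sits near the strip's left edge and link~V near its bottom-left corner. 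If $t=1$ there is nothing to prove, so assume $t\geq 2$.

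Next I would build the bridges. Suppose $S_j$ occupies columns $x\in[c,c+w-1]$ with $w\in\{m_1,m_2\}$, so $S_{j+1}$ starts at column $c+w$. Viewed in isolation the tour on $S_j$ is a Hamiltonian cycle of a board whose corner $(c+w-1,0)$ has degree~$2$, so the edge $\{(c+w-1,0),(c+w-b-1,a)\}$ lies in it; and the tour on $S_{j+1}$ contains link~H, which in global coordinates is $\{(c+w+a-1,b),(c+w+a-b-1,a+b)\}$. Since
\[ \{(c+w-1,0),(c+w+a-1,b)\} \quad\text{and}\quad \{(c+w-b-1,a),(c+w+a-b-1,a+b)\} \]
are both legal $(a,b)$ knight's moves, this pair of edges (the corner edge of $S_j$ and link~H of $S_{j+1}$) is a bridge for the compound of the two tours.

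Then I would perform the compounds for $j=1,2,\dots,t-1$ in this order, which is exactly the sequential concatenation of Lemma~\ref{l2}: the compound for index $j$ uses link~H of $S_{j+1}$ (untouched by all earlier compounds) together with the bottom-right corner edge of $S_j$ (also untouched, since earlier compounds only modify $S_j$ near its left edge), so every step is legal and the $t$ cycles merge into one Hamiltonian cycle of $[k]\times[n]$, i.e.\ an $(a,b)$ knight's tour. Finally, $S_1$ takes part only in the first compound, which deletes only an edge in its bottom-right corner; link~V of $S_1$, sitting near its bottom-left corner, is never disturbed, so the resulting tour contains link~V, as required.

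The step I expect to be the main obstacle is the bridge verification of the middle paragraph: checking that the forced corner edge of the left strip and link~H of the right strip really satisfy the adjacency condition in the definition of a compound, and that within each strip its link~H (near the left edge), its bottom-right corner edge, and its link~V (near the bottom-left corner) are pairwise disjoint edges — which only needs $m_1,m_2$ comfortably larger than $a+b$, automatic since the assumed tours exist. Everything else — the numerical-semigroup argument and the bookkeeping of which link is consumed where — is routine, being a direct transcription of Lemma~\ref{l2}.
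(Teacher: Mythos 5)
Your proposal is correct and follows essentially the same route as the paper: write $k$ as a non-negative combination of $m_1$ and $m_2$, tile $[k]\times[n]$ by vertical strips carrying the assumed tours, and sequentially concatenate them so that link H of each strip after the first is consumed (paired with the forced bottom-right corner edge of the previous strip), leaving the leftmost strip's link V intact. The paper's proof is just a terser statement of this, relying on a diagram; your bridge verification and the Chicken McNugget bookkeeping fill in the same details correctly.
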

				\begin{proof}
				  Any sufficiently large $k$ can be written as a linear combination of $m_1$ and $m_2$, which yields a partition of the board into a sequence of blocks of length $m_1$ and $m_2$. We construct $(a, b)$ knight's tours in these blocks and sequentially concatenate them using link H, as shown in the diagram.
				  
					\centering \includegraphics{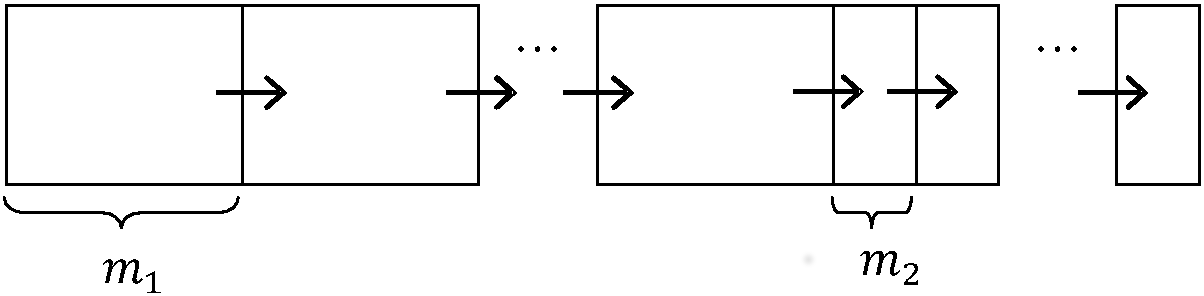}
				  
				  There is no assumption on the tour in the leftmost block, so the new $(a, b)$ knight's tour can inherit both link H and link V from it.
				\end{proof}
				\begin{lemma} \label{l13}
					Assume that there are $(a, b)$ knight's tours of  $[m] \times [n]$, $[n] \times [n]$ and $[m] \times [m]$, each containing link H and link V. Let $\gcd(m, n)=2$. For sufficiently large even numbers $k$ and $l$, $[k] \times [l]$ board admits an $(a, b)$ knight's tour. In addition, this knight's tour can be chosen to contain link V.
				\end{lemma}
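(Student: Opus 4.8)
The plan is to bootstrap from Lemma \ref{l12} by iterating the one-dimensional assembly argument in the second coordinate. First I would fix $[n]\times[n]$, $[m]\times[n]$, $[m]\times[m]$ as the given tiles, all equipped with link H and link V, and note $\gcd(m,n)=2$. The idea is to assemble a board $[k]\times[l]$ as a grid of rectangular tiles: first build horizontal strips of common height (either $n$ or $m$) and fixed length $k$, then stack these strips vertically. Concretely, for a sufficiently large even $k$, Lemma \ref{l12} applied with widths $m_1=m$, $m_2=n$ (using the tiles $[m]\times[n]$ and $[n]\times[n]$) produces an $(a,b)$ knight's tour of $[k]\times[n]$ containing link V; similarly, using the tiles $[m]\times[m]$ and $[n]\times[m]$ (the latter is just $[m]\times[n]$ rotated, which still carries both links by the symmetry built into the definitions of link H and link V), Lemma \ref{l12} produces an $(a,b)$ knight's tour of $[k]\times[m]$ containing link V.

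Next I would play the same game in the vertical direction. Having the two ``strip'' tiles $[k]\times[n]$ and $[k]\times[m]$, both of common width $k$ and both containing link V, I apply Lemma \ref{l12} again — now with the roles of the two axes swapped, so that link V plays the role link H played before — to concatenate a sequence of these strips (of heights $n$ and $m$) into a single $(a,b)$ knight's tour of $[k]\times[l]$ for any sufficiently large even $l$ expressible as a nonnegative integer combination of $m$ and $n$. Since $\gcd(m,n)=2$, every sufficiently large even $l$ is such a combination, so this covers all large even $l$; and by the ``no assumption on the leftmost (here bottommost) block'' clause of Lemma \ref{l12}, the resulting tour inherits link V (and, if we take the bottommost strip to itself contain link H, link H as well). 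To make the second application legitimate I must check that the hypothesis ``the two strips contain a link in the direction being assembled'' is met: that is exactly why I insisted both $[k]\times[n]$ and $[k]\times[m]$ be chosen with link V, and Lemma \ref{l12} explicitly allows that choice.

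The main obstacle — really the only subtlety — is bookkeeping about which links survive which concatenation and about axis orientation. Lemma \ref{l12} as stated assembles along the first coordinate and preserves link V (the ``untouched'' direction), consuming link H; when I reuse it to stack strips vertically I am applying it to the transposed board, so I need the symmetry $[m]\times[n]\cong[n]\times[m]$ under reflection in the main diagonal, under which link H and link V are interchanged, together with the fact (noted in the excerpt, ``for $m=n$ these coincide'', and more generally that both links are closed under symmetric images) that a tour with both links transposes to a tour with both links. The other minor point is the numerical one: $\gcd(m,n)=2$ makes the numerical semigroup generated by $m,n$ cofinite among the even integers, so ``sufficiently large even $k$'' and ``sufficiently large even $l$'' are exactly the right hypotheses and no parity obstruction is introduced. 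Once these are in place, the proof is a two-line invocation of Lemma \ref{l12} in each of the two coordinate directions.
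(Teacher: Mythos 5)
Your proposal is correct and is essentially the paper's own argument: the paper likewise builds $[k]\times[n]$ and $[k]\times[m]$ strips via Lemma \ref{l12} and then repeats that argument in the other coordinate direction to assemble $[k]\times[l]$. Your additional bookkeeping about which links survive (link V preserved, link H consumed, and the interchange of the two under transposition) is exactly the detail the paper leaves implicit.
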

				\begin{proof}
					First construct knight's tours in $[k] \times [n]$ and $[k] \times [m]$ as described in Lemma \ref{l12}. Then we can use the same argument to construct the required knight's tour in a $[k] \times [l]$ board. 
				\end{proof}
				
				\subsubsection{Square of side length 20}
					The first useful result for the $(2, 5)$ knight is the partition of the structure found on $[20] \times [10]$ into 16 vertex-disjoint cyclic graphs. Such a board will now be called a \emph{brick}.
					
					\begin{center}
						\includegraphics[scale=0.9]{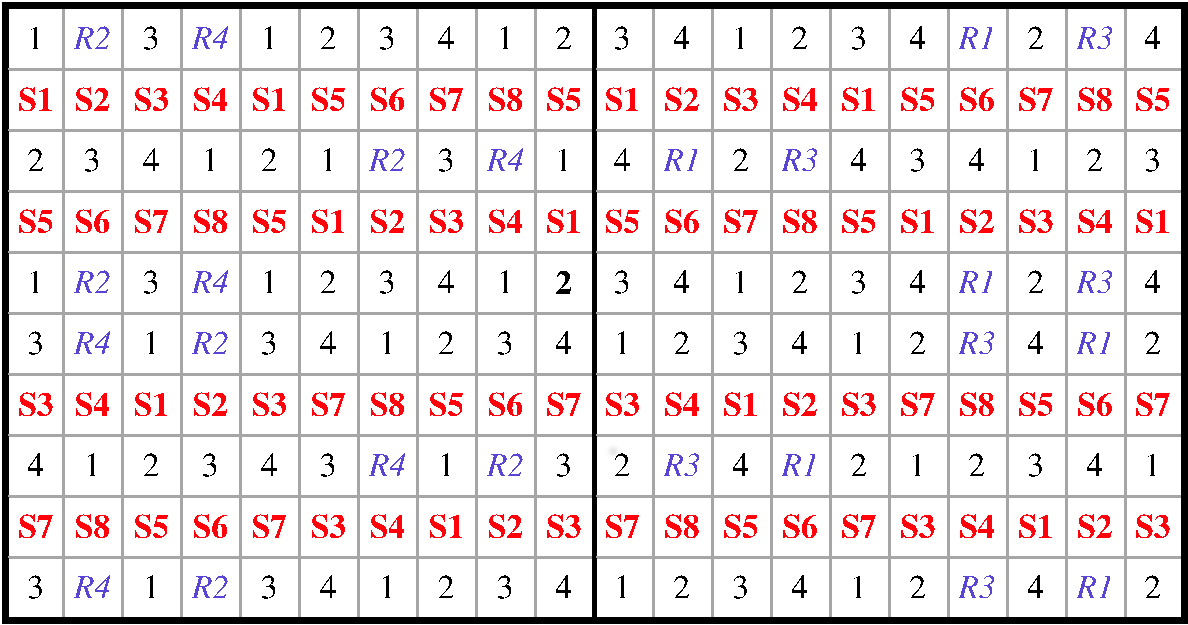}
					\end{center}
					
					While concatenating these cycles, we will have to keep track of which edges they contain, so we illustrate each cycle individually.
					\begin{center}
						\includegraphics[scale=0.95]{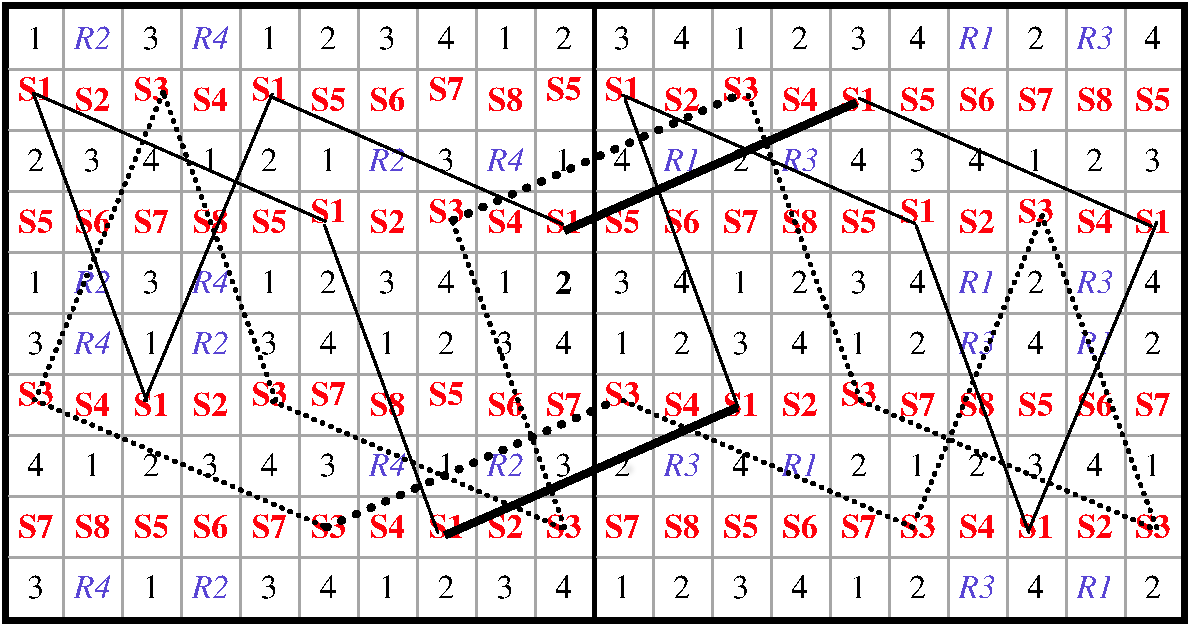}\\
						\small{Cycles $S1$ and $S3$. $S5$ and $S7$ are their mirror images.}
						\vspace{10pt}
						
						\includegraphics[scale=0.95]{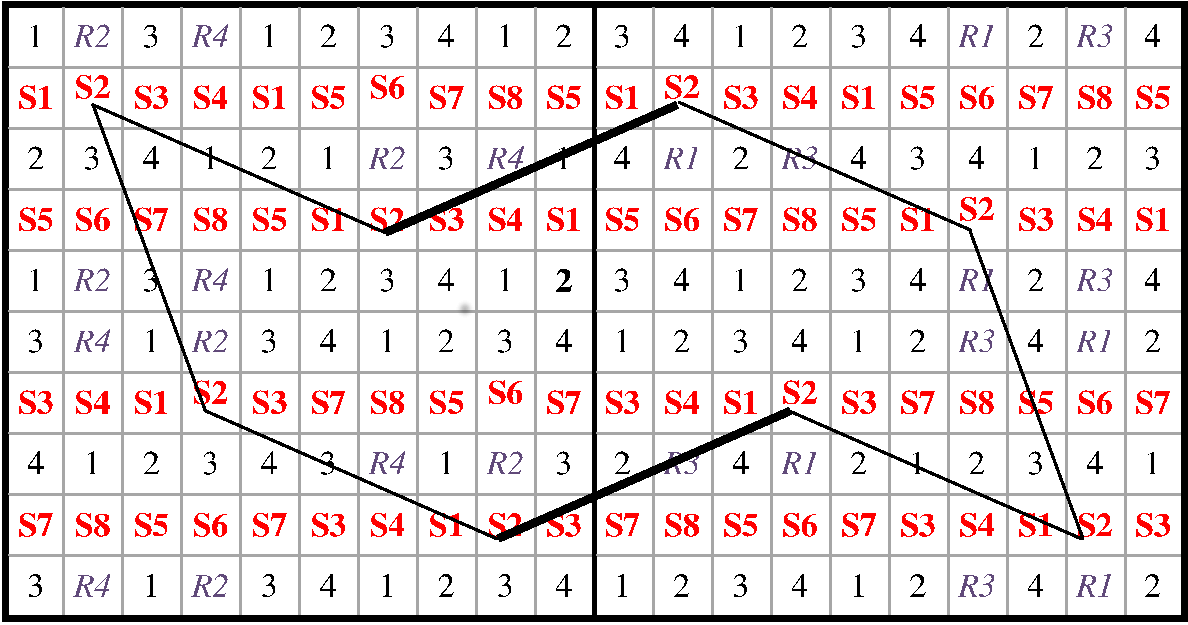}\\
						\small{Cycles $S$2, $S4$, $S6$ and $S8$.}
						
						\includegraphics[scale=0.9]{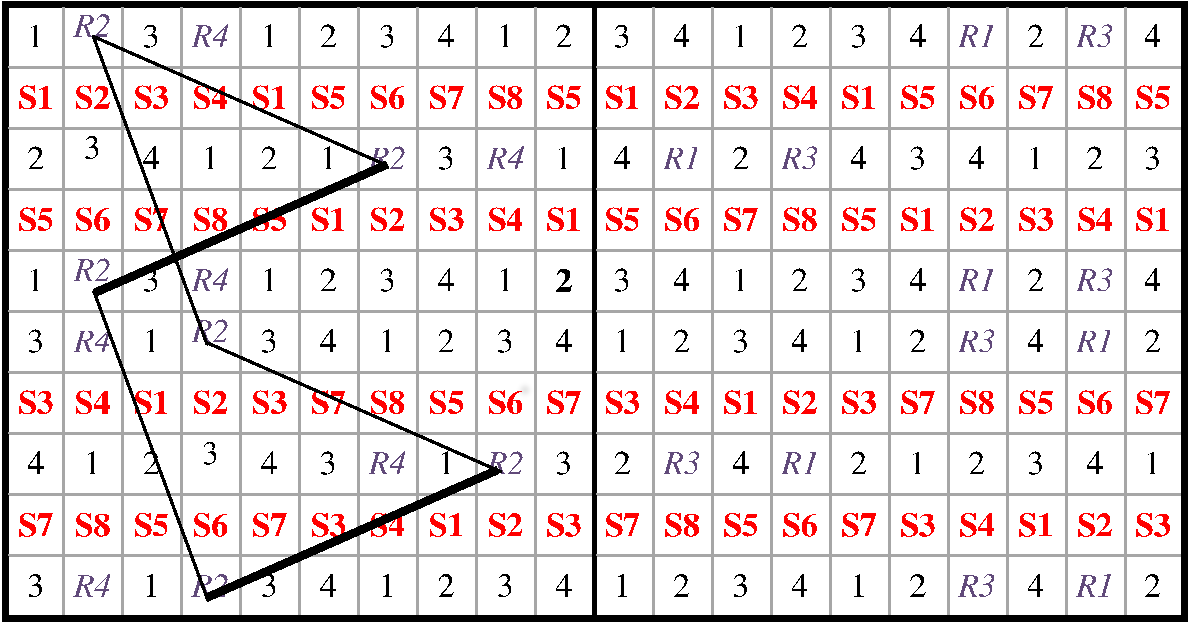}\\
						\small{Cycles $R1$ to $R4$.}
						\vspace{10pt}
						
						\includegraphics[scale=0.9]{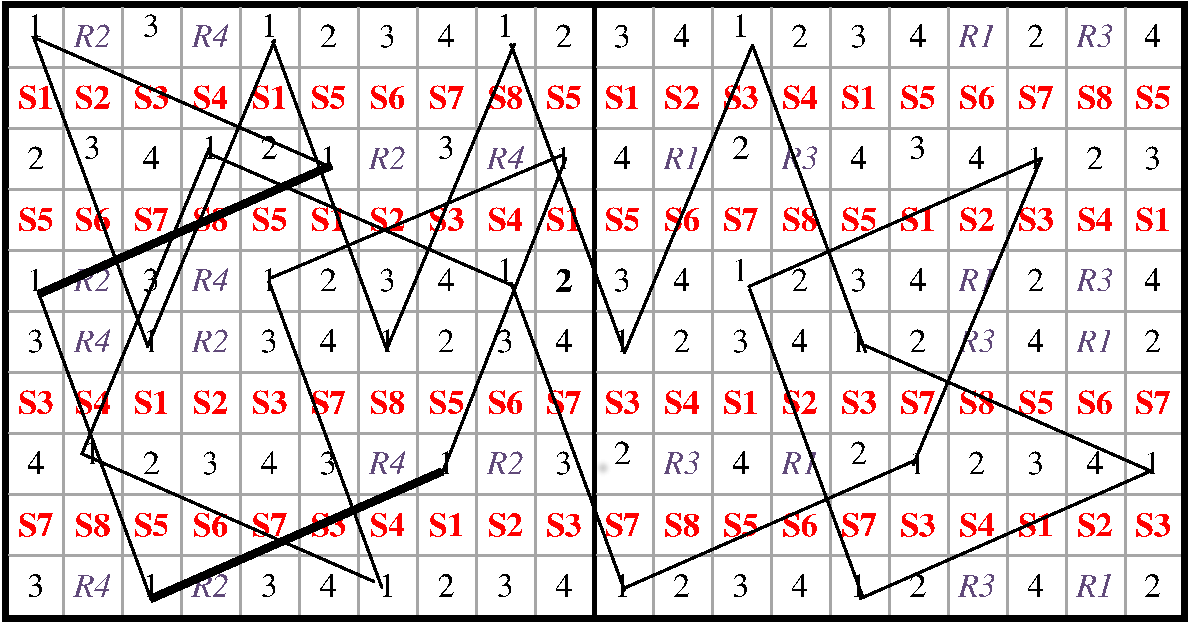}\\
						\vspace{3pt}
						\includegraphics[scale=0.9]{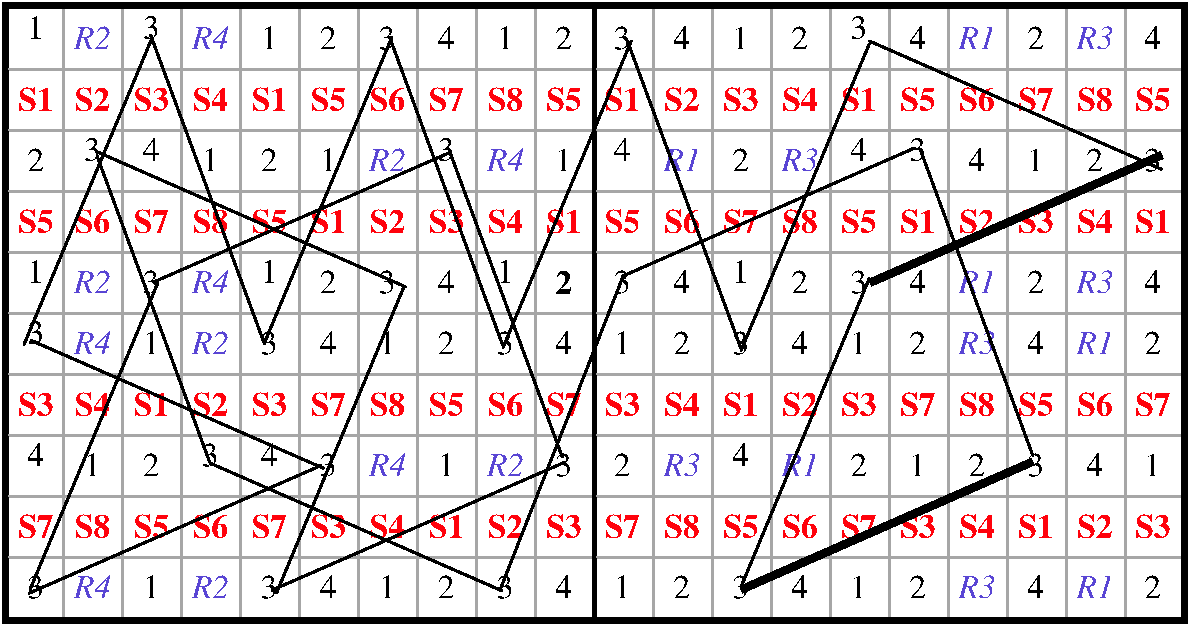}\\
						\small{Cycles 4 and 2 are mirror images of 1 and 3 respectively. They are longer and more complex because they were constructed as concatenations of smaller cycles (which look like $R1$-$R4$). }
					\end{center}
					
					\begin{lemma} \label{l14}
						For any $k \geq 2$, $[20] \times [10k]$ admits a $(2, 5)$ knight's tour.
					\end{lemma}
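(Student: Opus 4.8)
The plan is to tile $[20]\times[10k]$ by $k$ stacked copies of the brick and then concatenate all of the resulting cycles into a single one. Write $B_j = [20]\times\{10(j-1),\dots,10j-1\}$ for $j=1,\dots,k$, so that $[20]\times[10k] = B_1\cup\dots\cup B_k$. Into each $B_j$ transplant the translated copies of the sixteen cycles $S1,\dots,S8$, $R1,\dots,R4$, $1,2,3,4$ drawn above; since these sixteen cycles partition $V([20]\times[10])$, the $16k$ transplanted cycles are pairwise vertex-disjoint and their union is all of $[20]\times[10k]$. It then remains only to merge them into one cycle using the compound operation of Section 2.1.

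\textbf{Intra-brick step.} First I would carry out whatever merges are available inside a single brick from the explicit edge lists of the sixteen cycles, collapsing them within each $B_j$ into a bounded number of \emph{super-cycles}. The crucial proviso is that these intra-brick bridges are chosen so that each super-cycle still contains prescribed edges lying within $y$-distance $5$ of the two horizontal boundaries $y=10(j-1)$ and $y=10j$ of the brick. Since every move of the $(2,5)$ knight changes $y$ by at most $5$, any bridge joining a cycle of $B_j$ to a cycle of $B_{j+1}$ must use such near-boundary edges, so keeping them free is exactly what makes the next step possible. This is where the individually-drawn cycles are needed: one reads off, for each of $S1,\dots,S8$, $R1,\dots,R4$, $1,\dots,4$, which edges it contributes near the top and bottom of the brick, and picks the intra-brick bridges accordingly.

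\textbf{Inter-brick step and conclusion.} Next I would merge consecutive bricks: across the boundary $y=10j$ there are $(2,5)$- and $(5,2)$-knight moves joining a near-top edge of a super-cycle of $B_j$ to a near-bottom edge of a super-cycle of $B_{j+1}$, and each such pair of adjacent edges is a bridge. Form the auxiliary graph $H$ whose vertices are all the super-cycles (over all bricks) and whose edges are the bridges produced by the intra- and inter-brick steps; compounding along any spanning tree of $H$ yields a single cycle through every vertex of $[20]\times[10k]$, i.e.\ a $(2,5)$ knight's tour. Because the brick and the cross-boundary move pattern are identical at every interface, it suffices to verify one \emph{interface gadget} joining the super-cycles of two adjacent bricks, together with the claim that $H$ is connected; for $k\ge 2$ the chain $B_1-B_2-\dots-B_k$ of interface gadgets connects every super-cycle, whereas for $k=1$ the super-cycles of an isolated brick need not all be joinable internally, which is precisely why the hypothesis $k\ge2$ is present. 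If desired, a choice of near-boundary edges in (say) $B_1$ can be left unused so that the final tour also contains link H and link V, for later use in Lemmas~\ref{l12} and \ref{l13}.

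\textbf{Main obstacle.} The hard part is the bookkeeping: verifying that all chosen bridges are mutually compatible (no cycle-edge is used by two bridges, no two bridge-endpoint pairs collide), that the intra-brick reduction can be performed uniformly in $j$ while preserving the needed near-boundary edges, and that $H$ is genuinely connected. This is a finite check — one interface gadget, one end-brick adjustment, and a count of how the sixteen cycles group up — but it is the intricate core of the argument, and it is the reason the cycles $S1$--$S8$, $R1$--$R4$ and $1$--$4$ were displayed separately above.
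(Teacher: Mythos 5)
Your overall strategy --- tiling $[20]\times[10k]$ by $k$ translated copies of the brick, keeping the sixteen cycles $S1$--$S8$, $R1$--$R4$, $1$--$4$ in each copy, and merging everything by compounds along bridges --- is the same as the paper's, only with the two merging stages in the opposite order: the paper first bridges each cycle to its namesake in the neighbouring brick ($S1^{(i)}$ to $S1^{(i+1)}$, and similarly for the other fifteen), which is uniform in $i$ and yields sixteen long cycles spanning the whole rectangle; only then does it merge the different families, using bridges located in the first and second bricks, into two clusters and finally into one tour. Your intra-brick-first ordering is not wrong in principle, but note that the paper's cross-family merges are \emph{not} confined to a single brick (the clusters are built ``using only the first and second brick''), so your assumption that the sixteen cycles of one brick can be collapsed by purely intra-brick bridges into a bounded number of super-cycles, while also leaving the needed near-boundary edges untouched, is an extra unverified hypothesis rather than a simplification.

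More importantly, the proposal has a genuine gap: the entire content of this lemma is the explicit exhibition of the bridges, and that is exactly what you defer. A bridge is not merely a pair of edges near the interface; it requires edges $A_1B_1$ of one cycle and $A_2B_2$ of another such that $A_1A_2$ and $B_1B_2$ are both legal $(2,5)$ moves, and whether such pairs exist between these particular sixteen cycles --- and can be chosen pairwise compatibly, no cycle edge being consumed twice --- is precisely what the paper settles by its sequence of figures. Likewise the connectivity of your auxiliary graph $H$ is asserted, not proved; in the paper it is a nontrivial fact, established only by construction, that the sixteen spanning cycles group into exactly two clusters which are then joined by one further bridge. So what you call ``bookkeeping'' in the final paragraph is not a routine finite check to be waved at: it is the proof, and without displaying the bridges (or an argument replacing the figures) the lemma has not been established. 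The closing remark about reserving edges for link H and link V is also not part of this lemma in the paper; those links are extracted later from the explicit cycle $3^{(1)}$, again by inspection of the displayed cycles.
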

					\begin{proof}
						We divide the board into a sequence of  $k$ bricks. Cycle $x$ belonging to $i$-th block from the top will be denoted by $x^{(i) }$. The edges we use for concatenation are highlighted in figures above.
						
For each $i$, we first concatenate $S1^{(i) }$. and $S1^{(i+1) }$, $S2^{(i) }$ and $S2^{(i+1) }$ etc, as in the figure below. As usually, bridges are denoted by gray edges.

					\begin{center}
						\includegraphics[scale=0.9]{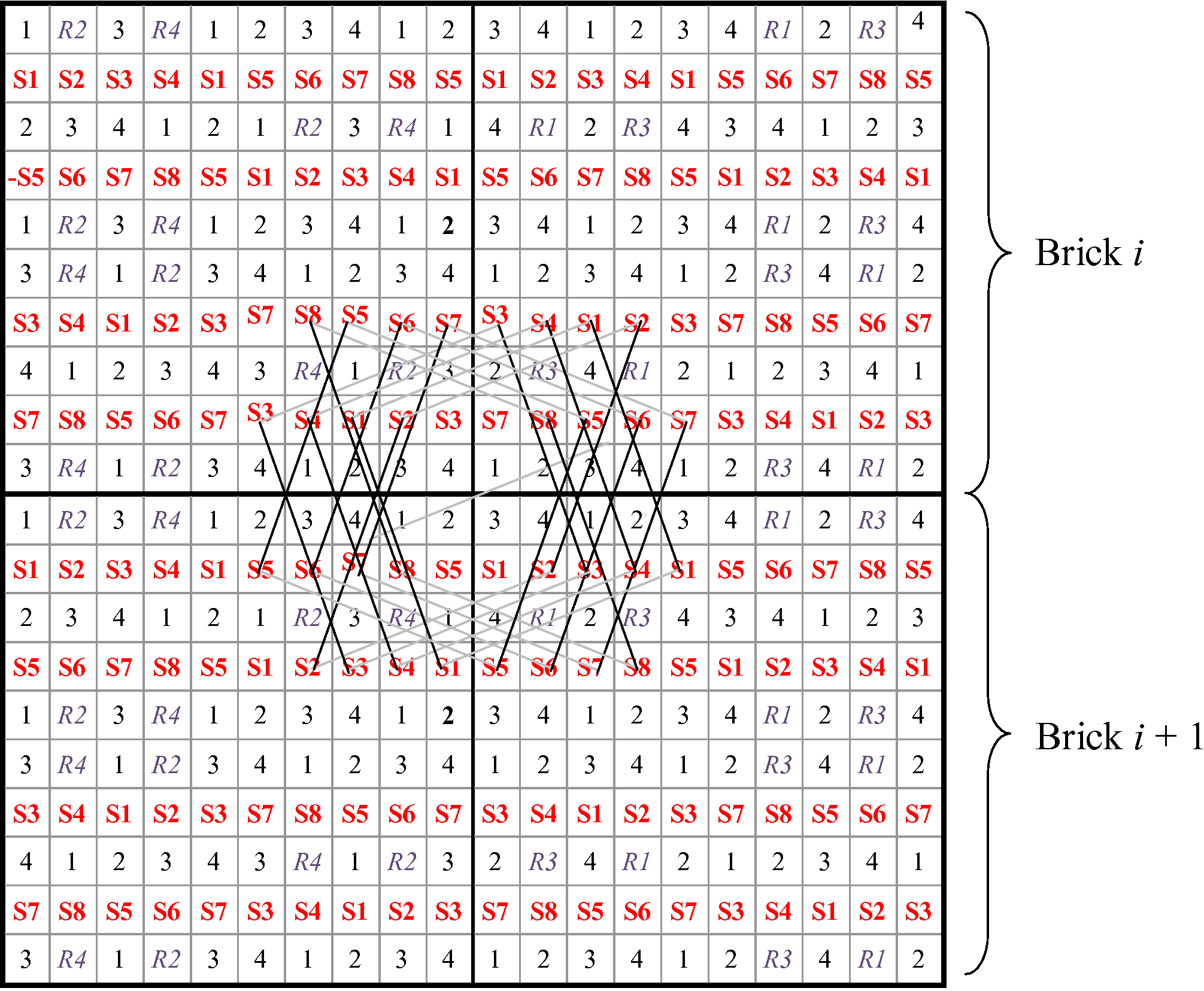}
					\end{center}
					
					We repeat the same with 1-4 and $R1$-$R4$.
					\begin{center}
						\includegraphics[scale=0.9]{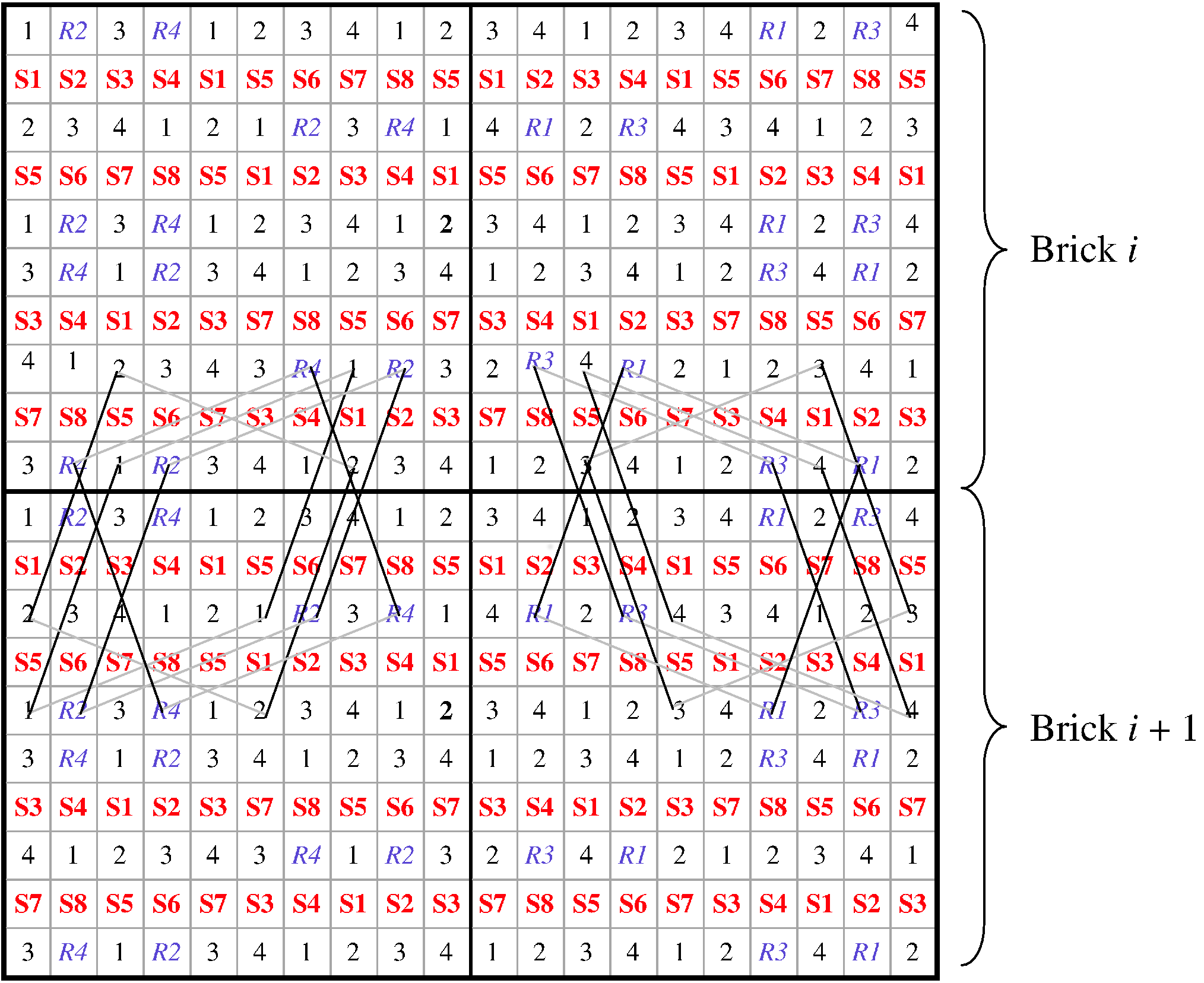}
					\end{center}
					Recall that these connections happen between each pair of neighbouring bricks. The result is a $20 \times 10k$ rectangle partitioned into 16 cycles: $S1$-$S8$, 1-4 and $R1$-$R4$.
					These are concatenated using only the first and second brick as follows.
					\begin{center}
						\includegraphics[scale=0.9]{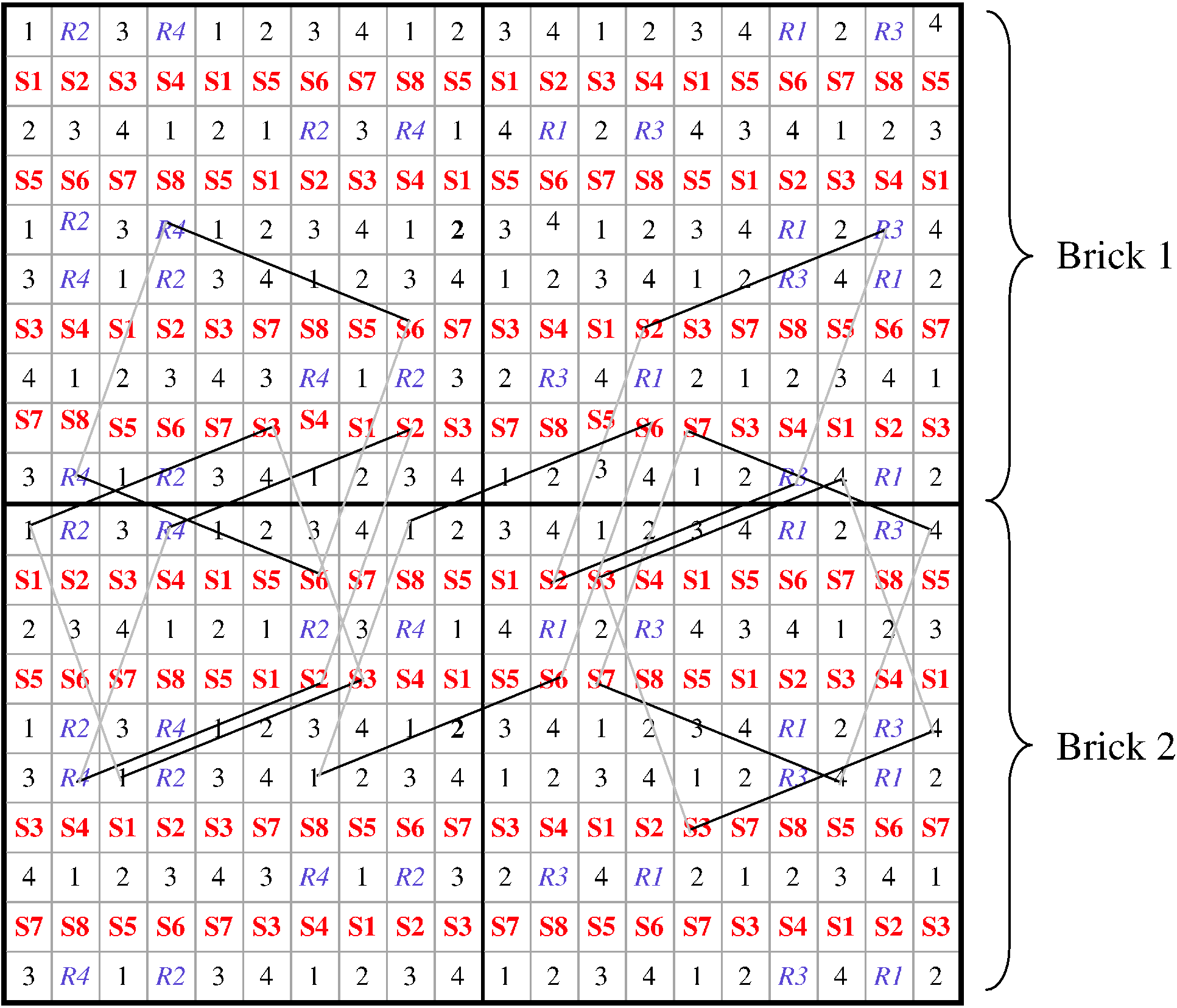}
					\end{center}
					This forms the cluster $S7 \cup 4 \cup S3 \cup 1 \cup S6 \cup R4 \cup S2 \cup R3$. Similarly, we form cluster $S1 \cup 2 \cup S5 \cup 3 \cup S4 \cup R2 \cup S8 \cup R1$.
					\begin{center}
						\includegraphics[scale=0.9]{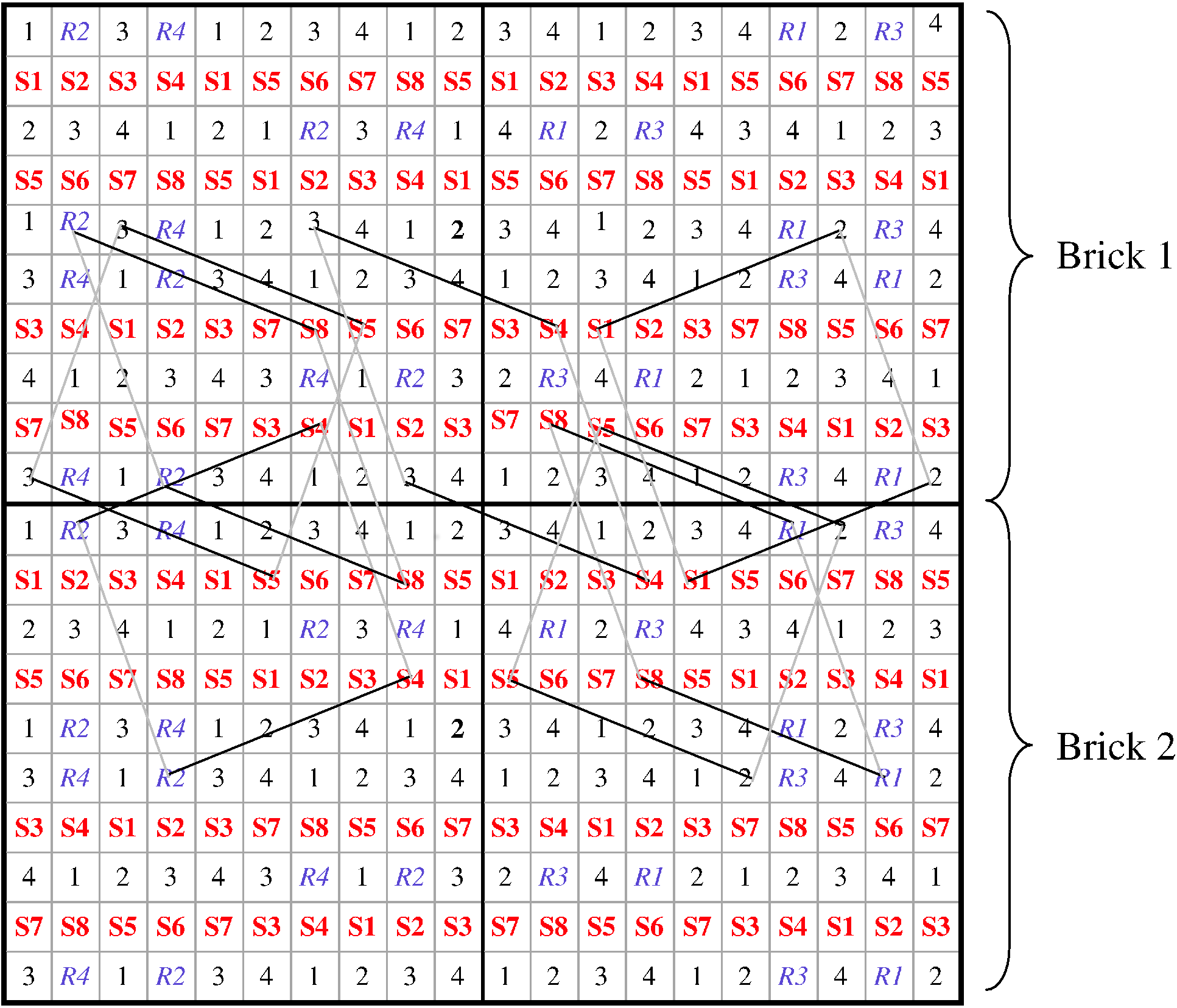}
					\end{center}
					Finally, the two clusters are concatenated using the following bridge.
					\begin{center}
						\includegraphics[scale=0.9]{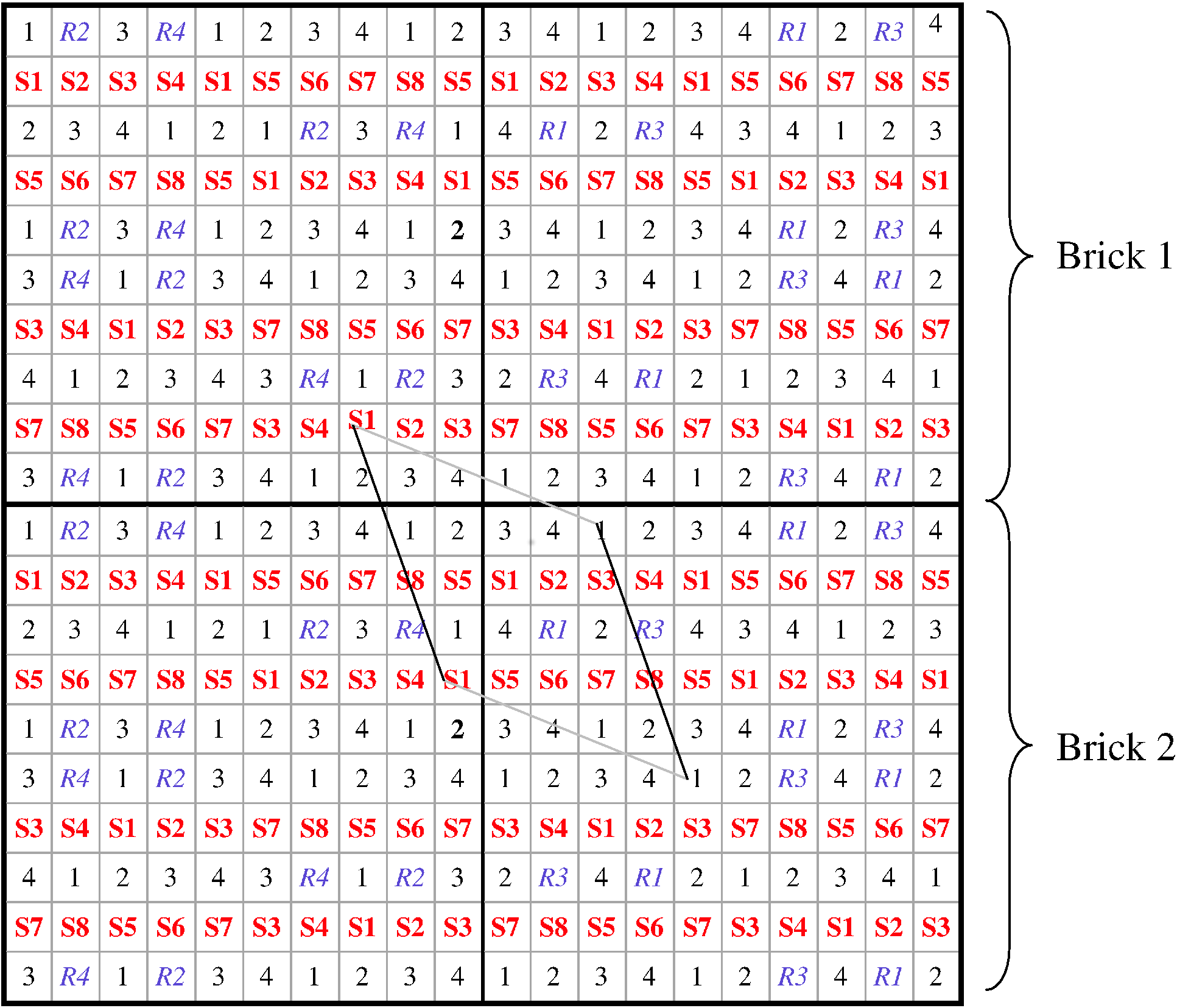}
					\end{center}
				\end{proof}
	
					The obvious purpose of this lemma is to get a $(2, 5)$ knight's tour in $[20] \times [20]$, but it is also crucial for constructing a rectangle, whose exact size is yet to be determined.
					
\subsubsection{Rectangle \texorpdfstring{$[20] \times [154]$}{[20] [154]}}
					As suggested by the subtitle, we insert a remainder of side length $4$ (shaded) into the construction in Lemma \ref{l14}.	
					
				\begin{wrapfigure}[13]{L}{0.17\textwidth}
    				\includegraphics[scale=0.31]{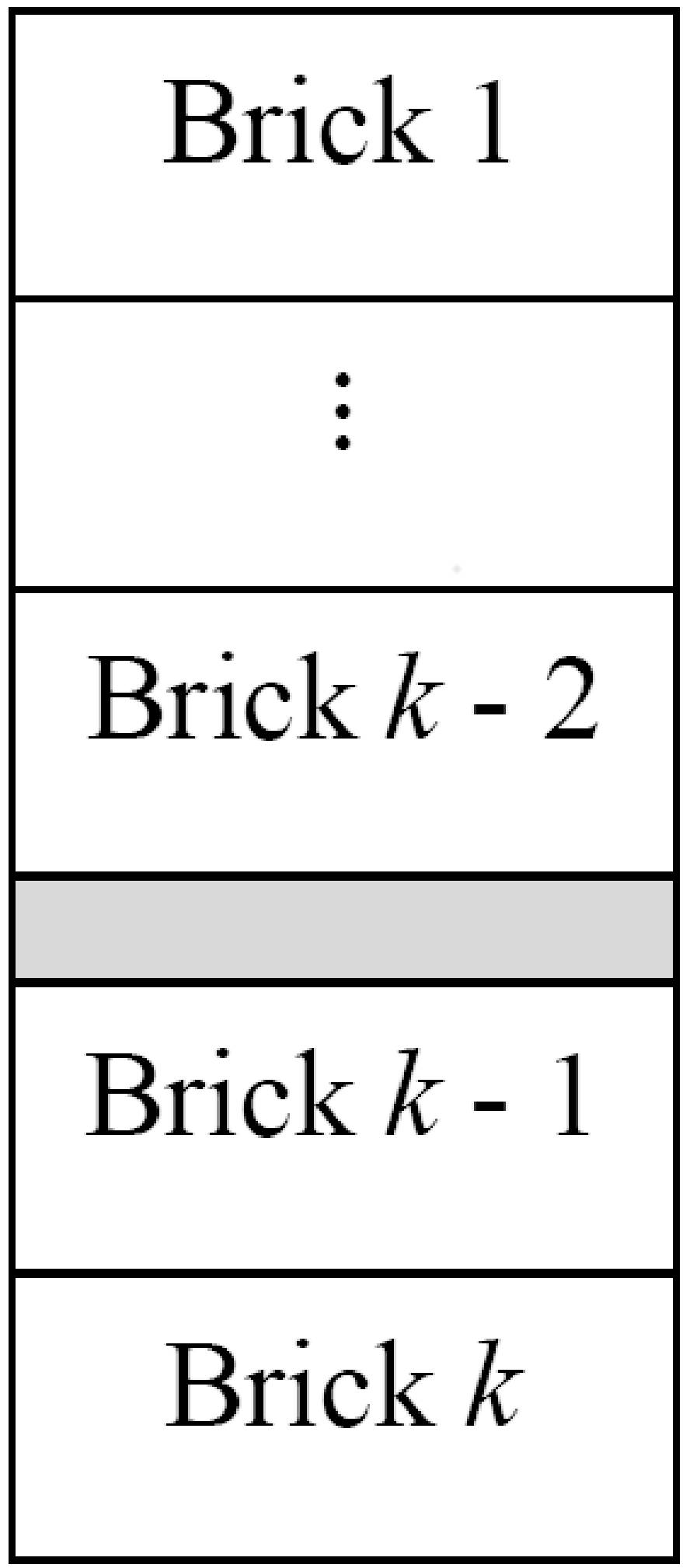}
				\end{wrapfigure}
				
Upper and lower rectangles of side lengths $20 \times 10(k-2)$ and  $20 \times 20$ are equipped with tours from Lemma \ref{l14}, but the bottom one is reflected vertically. We call these tours $U$ and $L$ respectively. The remainder is filled in by replacing the gray edges with black ones in Figure \ref{g27}. 
				
				\begin{lemma} \label{l15}
					For any $k \geq 4$, $[20] \times [10k+4]$ admits a $(2, 5)$ knight's tour.
				\end{lemma}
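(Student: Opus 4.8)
The plan is to cut $[20]\times[10k+4]$ into three horizontal strips stacked on top of one another: an upper strip $R_U$ of height $10(k-2)$, a thin middle strip $R_M$ of height $4$, and a lower strip $R_L$ of height $20$, so that $10(k-2)+4+20=10k+4$. The hypothesis $k\geq 4$ is used exactly here: it guarantees $k-2\geq 2$, so Lemma~\ref{l14} applies to $R_U\cong[20]\times[10(k-2)]$, and it applies to $R_L\cong[20]\times[20]$ with the value $k=2$. First I would install the resulting $(2,5)$ knight's tour $U$ on $R_U$ and $(2,5)$ knight's tour $L$ on $R_L$, taking $L$ reflected in a horizontal axis so that the edges of $L$ lying just below the top edge of $R_L$ occupy the positions shown in Figure~\ref{g27} (the reflection makes the boundary structure of $L$ at the interface with $R_M$ mirror that of $U$ at its interface with $R_M$). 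At this stage $U$ and $L$ are two vertex-disjoint cycles that together cover everything except the $80$ vertices of $R_M$.

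Next I would weave $R_M$ into the tour. The key observation is that a $(2,5)$ knight standing in the four-row strip $R_M$ can only make moves with vertical displacement $\pm2$ (which remain inside $R_M$) or vertical displacement $\pm5$ (which leave $R_M$ and land inside $R_U$ or $R_L$); so the natural way to cover $R_M$ is by short arcs that dip into it from the tours above and below. Concretely, one deletes the fixed list of edges of $U$ and of $L$ drawn gray in Figure~\ref{g27} and substitutes for them the black edges of that figure, which form a system of $(2,5)$-paths passing through each of the $80$ vertices of $R_M$ exactly once and reconnecting all the loose ends. This surgery is legitimate for every $k\geq 4$ because the brick concatenations carried out in the proof of Lemma~\ref{l14} always use the same columns near the top and bottom of a brick, so the gray edges we need to delete really do occur in $U$ and in $L$ no matter how tall those strips are.

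Finally I would check that the curve produced by this surgery is a single Hamiltonian cycle rather than a union of several cycles. One starts from two cycles, $U$ and $L$; each deleted gray edge turns a cycle into a path, each inserted black arc either splices two such paths together or closes one up, and the point of Figure~\ref{g27} is that the bookkeeping works out to exactly one closed curve at the end, now visiting every vertex of $R_U\cup R_M\cup R_L=[20]\times[10k+4]$. If a later application requires it, link H and link V of the resulting tour can be taken unchanged from a corner of $R_L$ (or $R_U$) that the surgery never touches, so the tour may also be assumed to contain those links.

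The hard part will be the middle step: exhibiting the explicit family of arcs through the four-row band $R_M$ that simultaneously (i) uses only legal $(2,5)$ moves, of which there are very few available in so narrow a strip, (ii) covers all $80$ vertices of $R_M$, and (iii) glues $U$, $L$ and the band into one cycle. Because $R_M$ is so thin there is almost no freedom, so this cannot be argued by a general principle and has to be done by an explicit picture; confirming that the substitution displayed in Figure~\ref{g27} achieves all three of these at once is the real content of the lemma.
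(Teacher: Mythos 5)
Your proposal follows the paper's proof essentially verbatim: the same three-strip decomposition ($20\times 10(k-2)$ on top, the height-$4$ remainder, and a vertically reflected $20\times 20$ tour below, both strips equipped via Lemma~\ref{l14}), with the remainder absorbed by the gray-to-black edge substitution displayed in Figure~\ref{g27}. The only small discrepancy is at the end: after that substitution the paper is left with \emph{two} cycles (one extending $U$, one extending $L$, mirror images across the horizontal bisector of the remainder), which are then merged by one further bridge (the dotted edges), rather than the surgery alone yielding a single closed curve as you suggest.
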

				\begin{proof}
					
			\begin{figure}[!ht]
				\centering
				\includegraphics{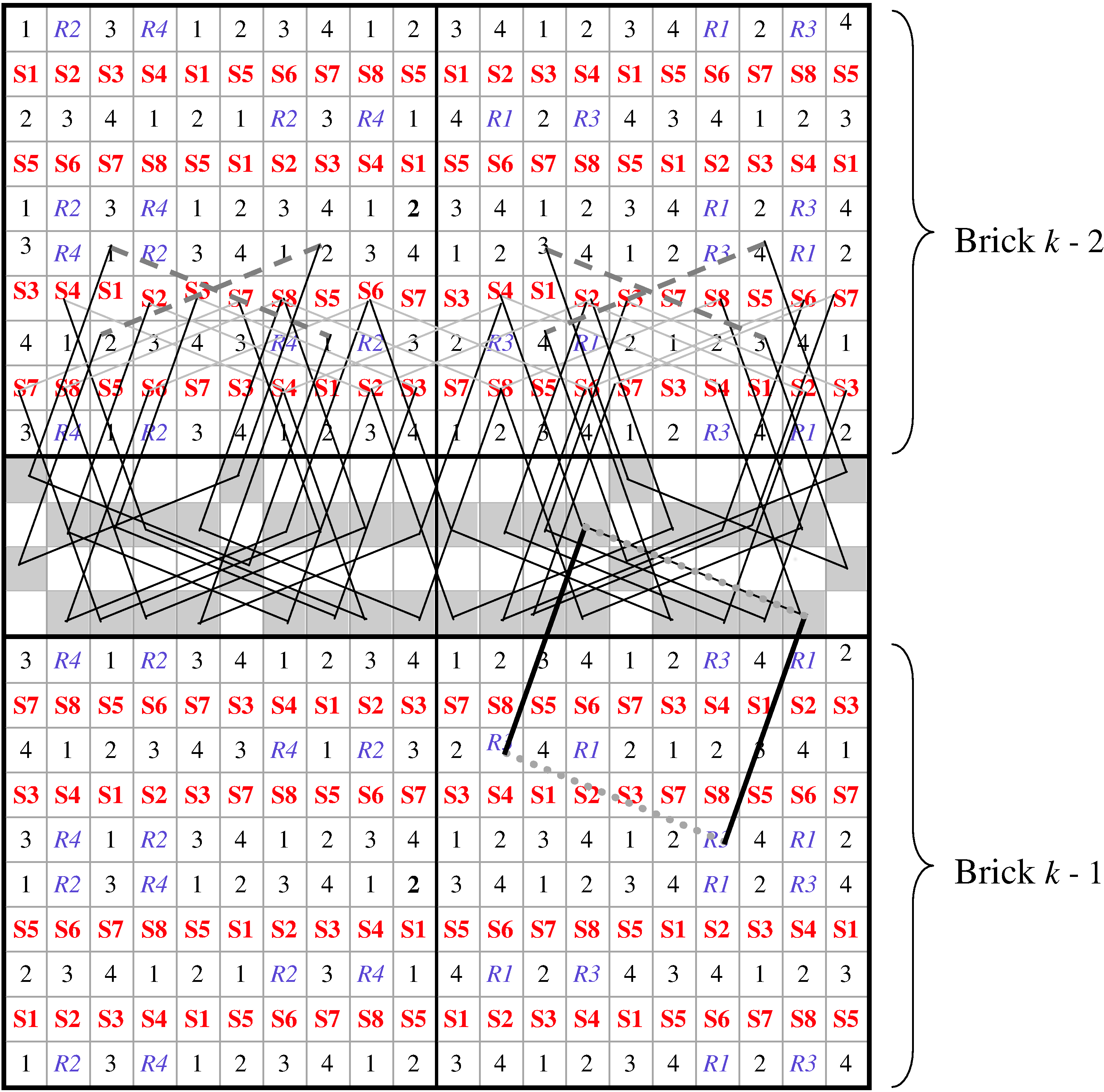}
				\caption{Extending the $(2, 5)$ knight's tours constructed in Lemma \ref{l14} to the remainder.
} \label{g27}
			\end{figure}
					
					Figure \ref{g27} shows how tour $U$ is extended to traverse the subset of the remainder indicated by gray shading.
					
					The complement of this subset is exactly its mirror image under reflection in the horizontal bisector of the remainder, so it can be filled in by extending tour $L$ in the same way.

					This leaves us with two separate cycles which originate from $U$ and $L$. These are concatenated using the bridge formed by dotted edges, so the proof is complete.
				\end{proof}
			\phantom{\large{some text to complete\\}}
			This gives us sufficient tools to deduce the main results of this section.	
			
				\begin{figure}[!ht]
					\centering
					\includegraphics[scale=0.8]{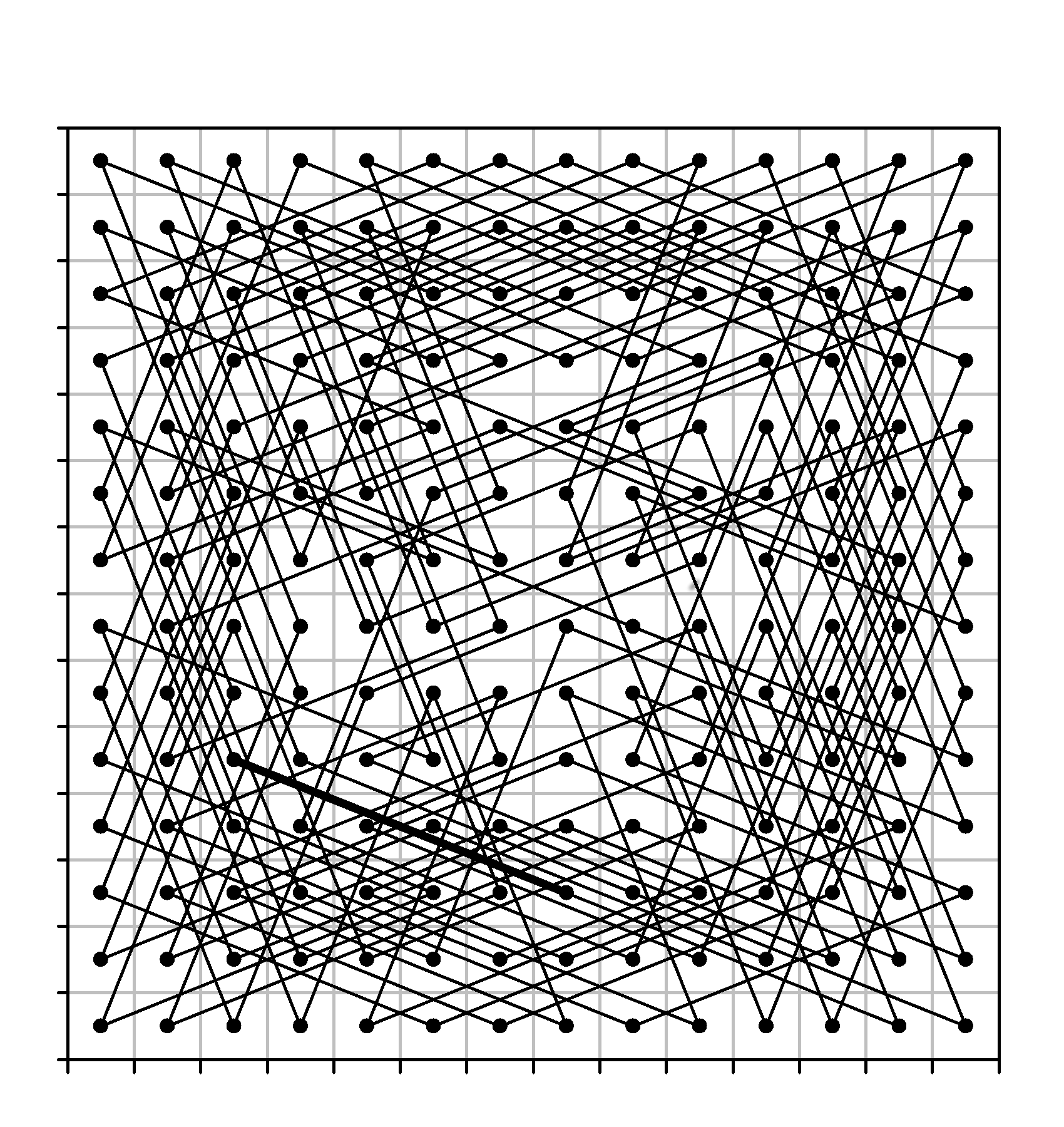}
					\caption{A $(2, 5)$ knight's tour in $[14]\times [14]$, constructed using one of our programs. Link H and link V are both given by the highlighted edge.
	} \label{gs2}
				\end{figure}
				
				\begin{theorem}
					For any sufficiently large even number $n$, there is a $(2, 5)$ knight's tour in $[n]^2$.
				\end{theorem}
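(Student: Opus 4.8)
The plan is to assemble the tour with Lemma~\ref{l13}, which is exactly what the ``assembling rectangular blocks'' machinery is for: it takes three seed tours --- of $[m]\times[n]$, of $[n]\times[n]$ and of $[m]\times[m]$, each containing link H and link V, with $\gcd(m,n)=2$ --- and produces a $(2,5)$ knight's tour of $[k]\times[l]$ for \emph{every} sufficiently large pair of even numbers $k,l$. Specialising $k=l=n$ then proves the theorem (and yields the rectangular strengthening for free), so the task reduces to exhibiting one admissible triple of seeds. I would take $m=14$ and $n=20$, so that $\gcd(m,n)=2$. The seed $[14]^2$ is the computer-generated tour of Figure~\ref{gs2}, which already carries both link H and link V. The seed $[20]^2$ is the tour of Lemma~\ref{l14} with $k=2$. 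The seed $[14]\times[20]$ is produced by the same heuristic search that gave $[14]^2$ --- both side lengths are well above the minimal board size, so nothing obstructs this --- and we ask the search to return a tour through the prescribed corner edges, so that link H and link V are both present.

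The one genuine point to verify is that link H and link V can be made to coexist in the two seeds that are not handed to us already linked, since these links live near different corners and board symmetry does not force it. For $[20]^2$ I would trace the concatenation in the proof of Lemma~\ref{l14}: the edges making up link H and link V lie inside the corner cycles ($S1,\dots,S8$ and their mirror images), and one checks that the bridges used to merge the sixteen cycles never delete them; if some orientation spoils one of them, a reflection or quarter-turn of the whole $[20]\times[20]$ construction --- which the brick decomposition tolerates --- restores it. For $[14]\times[20]$ this is built into the choice of search output.

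Granting the triple, the theorem is one line: $\gcd(14,20)=2$, so Lemma~\ref{l13} gives $(2,5)$ knight's tours of $[k]\times[l]$ for all sufficiently large even $k,l$, hence in particular of $[n]^2$ for all sufficiently large even $n$. The real obstacle therefore lies not in this statement but in the preparation --- supplying the $[14]\times[20]$ seed and confirming that it and the $[20]^2$ tour of Lemma~\ref{l14} both carry the two links; the supporting rectangles built earlier in the section (the wide boards obtained from Lemmas~\ref{l14}, \ref{l15} and~\ref{l12}) are precisely the slack the seed-assembly scheme is designed to exploit, and that work is already complete.
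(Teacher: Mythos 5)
Your skeleton is the paper's: apply Lemma~\ref{l13} to a triple of seed boards, each carrying link H and link V, with the two side lengths having greatest common divisor $2$, and then specialise to square boards. The gap is in the instantiation. You choose $m=14$, $n=20$, which forces you to supply a linked $(2,5)$ knight's tour of $[14]\times[20]$ --- and you do not supply it. Saying that ``the same heuristic search'' would find one, and that one can ``ask the search to return a tour through the prescribed corner edges,'' is not a proof: no such tour is exhibited in the paper or in your argument, and the existence of a Hamiltonian cycle through two prescribed edges in this particular graph is exactly the kind of fact that must be witnessed, not presumed (note that $14\times 14$ is already close to the smallest board the paper could handle for the $(2,5)$ knight, so ``well above the minimal board size'' is not an argument). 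Your closing claim that ``that work is already complete'' via Lemmas~\ref{l12}, \ref{l14} and~\ref{l15} is mistaken on this point: those lemmas only produce boards of widths $20$ (heights $10k$ and $10k+4$), never a $14\times 20$ board, so they cannot fill the hole.

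The paper sidesteps precisely this difficulty by choosing $(m,n)=(20,154)$ with $\gcd(20,154)=2$, a choice engineered so that all three seeds come from material already proved: the $154\times154$ square is assembled from copies of the linked $14\times14$ tour of Figure~\ref{gs2} because $154=11\cdot 14$; the $20\times154$ rectangle is the case $154=10\cdot 15+4$ of Lemma~\ref{l15}, and the paper checks explicitly (thick edges of cycle $3^{(1)}$) that it contains both link H and link V; and the $20\times20$ square is Lemma~\ref{l14} with $k=2$. Your remaining verification for $[20]^2$ (that the bridges in Lemma~\ref{l14} never consume the link edges, with a symmetry of the whole brick construction as a fallback) is in the right spirit and matches what the paper does via the cycle-$3^{(1)}$ picture, but it does not rescue the argument: without a concrete linked $[14]\times[20]$ tour, Lemma~\ref{l13} cannot be invoked with your parameters, so the proof as written is incomplete. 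Either exhibit such a tour (with both links) or switch to the paper's parameters, where no new computer-generated board is needed.
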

				\begin{proof}
					We use Figure \ref{gs2} to construct a $(2, 5)$ knight's tour in a square of side length $154 = 11 \cdot 14$ as described in Lemma \ref{l13}. This tour contains both links.
					
					The $(2, 5)$ knight's tour in the $20 \times 154$ rectangle given by Lemma \ref{l13} contains both link H and link V as a part of cycle $3^{(1) }$ (thick edges):
					\newpage
					\begin{center}
						\includegraphics[scale=0.8]{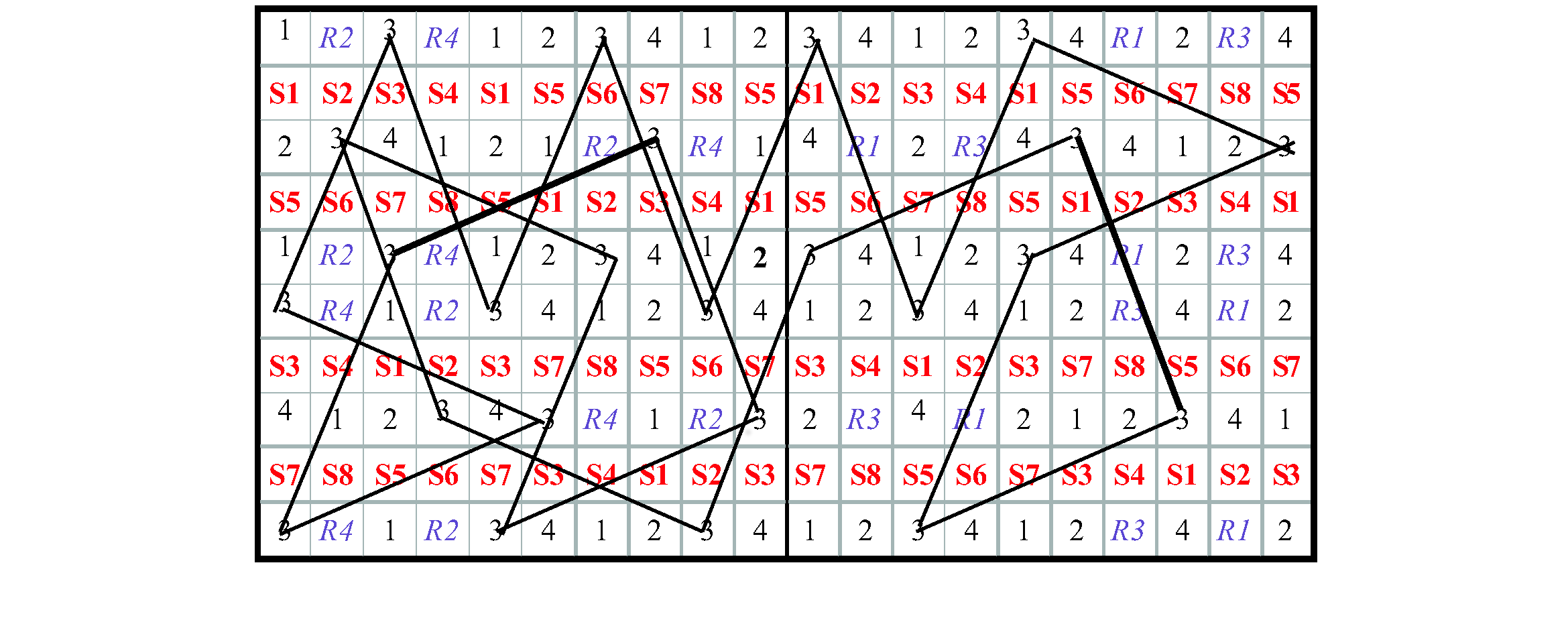}
					\end{center}
					Now we have rectangles of side lengths $20 \times 20$,  $20 \times 154$ and $154 \times 154$ containing both links, so the claim holds by Lemma \ref{l13}.
					
				\end{proof}
				
			\begin{corollary}
				For sufficiently large even $n$ and $d \geq 2$, there is a $(2, 5)$ knight's tour in $[n]^d$.
			\end{corollary}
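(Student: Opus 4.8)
The plan is to reduce to Lemma~\ref{l9}. That lemma turns a \emph{linked} planar $(2,5)$ knight's tour in $[n]^2$ --- one containing link~$\alpha=\{(0,5),(2,0)\}$ and link~$\beta=\{(n-1,2),(n-6,0)\}$, for some even $n\ge 7$ and hence for all sufficiently large even $n$ --- into a $(2,5)$ knight's tour in $[n]^d$ for every $d$. Since $a=2$ and $b=5$ are coprime and not both odd, nothing else is needed, and the passage from two to $d$ dimensions is already packaged inside Lemma~\ref{l9}. So the entire content of the corollary is to produce, for all sufficiently large even $n$, a \emph{linked} $(2,5)$ knight's tour in $[n]^2$.

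The preceding theorem already provides a $(2,5)$ knight's tour in $[n]^2$ for every sufficiently large even $n$; I would revisit its proof to see that the tour can be taken to be linked. There the square is assembled, via Lemma~\ref{l13}, out of the small tours on $[20]\times[20]$, $[20]\times[154]$ and $[154]\times[154]$, each of which contains link~H and link~V; by the proof of Lemma~\ref{l12} the \emph{leftmost} building block in such an assembly carries no constraints, so the finished tour inherits whatever corner edges that block possesses, and in a square link~H and link~V are exchanged by reflection in the main diagonal. Using this freedom, together with the fact that $\{(0,0),(2,5)\}$ is a forced edge (the corner $(0,0)$ has degree $2$ in the $(2,5)$ knight's graph), I would check that both link~$\alpha$ and link~$\beta$ --- or, as permitted by the remark following Lemma~\ref{l9}, a suitable pair of symmetric images of the universal corner bridge built from $\{(0,0),(2,5)\}$ and $\{(0,5),(2,0)\}$ --- survive in the assembled $[n]^2$ tour. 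Once a linked planar tour is in hand, Lemma~\ref{l9} applies verbatim and the corollary follows.

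The only genuine work is this last verification: tracking the chosen corner edges through the cascade of concatenations in Lemmas~\ref{l14} and~\ref{l15} and in the theorem, and confirming that none of the intervening bridges (all of which are performed away from the relevant corners) consumes them. This is exactly the kind of bookkeeping already carried out for the $(2,3)$ knight, where link~$\alpha$ and link~$\beta$ were simply read off the corner square of Figure~\ref{g12}; for the $(2,5)$ knight it is spread over a few more building blocks but requires no new idea. I expect this to be the main --- indeed the only --- obstacle; everything else is immediate from Lemma~\ref{l9}.
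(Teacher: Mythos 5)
Your proposal takes essentially the same route as the paper: reduce to Lemma \ref{l9} by verifying that the assembled planar $(2,5)$ tour contains link $\alpha$ and link $\beta$ (or symmetric images of the corner bridge, as permitted by the remark after Lemma \ref{l9}), which is correct. The bookkeeping you defer is exactly what the paper dispatches in one observation: symmetric images of the edge $\{(0,2),(5,0)\}$ sit in every corner of the brick (as part of cycles $1$--$4$) and of the $14\times 14$ tour, hence in every corner of each building block, so these corner edges survive the assembly of Lemma \ref{l13} regardless of the blocks' orientations and no tracking through Lemmas \ref{l14} and \ref{l15} is needed.
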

			\begin{proof}
				Symmetric images of the edge  $\{(0, 2), (5, 0)\}$  exist in each corner of our brick (as part of cycles $1$, $2$, $3$ and $4$) and in each corner of our tour in the $14 \times 14$  square in Figure \ref{gs2}. Hence the same holds for our building blocks $154 \times 154$, $154 \times 20$ and $20 \times 20$ rectangles).
				
				The $(2, 5)$ knight's tour in $[n]^2$ is assembled from these blocks as described in Lemma \ref{l13}, so it contains link $\alpha$ and link $\beta$ regardless of the orientations of building blocks. The hypothesis of Lemma \ref{l9} is satisfied, so we deduce the result.
			\end{proof}
		\subsection{Further research}
				For the general case (with $a$ and $b$ still coprime), we can only notice that there is no apparent difference among $(a, b)$ knight's graphs that would imply or indicate non-existence of $(a, b)$ knight's tours. The following conjecture is already formulated by Erde, Gol\'{e}nia and Gol\'{e}nia.
			\begin{conjecture}[Erde, Gol\'{e}nia, Gol\'{e}nia \cite{erde}]  \label{c11}
				Let  $\gcd(a, b) = 1$, with $a$ and $b$ not both odd. For sufficiently large even $n$ and $d \geq 2$, there is an $(a, b)$ knight's tour in $[n]^d$.
			\end{conjecture}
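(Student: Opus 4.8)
The plan is to reduce the conjecture to the two-dimensional case and then attack the planar problem with the rim-colouring and sequential-concatenation machinery developed above for the $(a,1)$, $(2,3)$ and $(2,5)$ knights; the genuinely hard parts are the corner combinatorics of the rim colouring and the construction of the base cases, which is presumably why the problem remains open. \emph{Dimension reduction.} By Lemma~\ref{l9} (or Lemma~\ref{l13} in its rectangular form) it suffices to produce, for every sufficiently large even $n$, a \emph{linked} $(a,b)$ knight's tour in $[n]^2$, that is one containing link~$\alpha = \{(0,b),(a,0)\}$ and link~$\beta = \{(n-1,a),(n-b-1,0)\}$; all higher-dimensional tours then follow. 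As in the proofs of Corollary~\ref{c7} and of the $(2,3)$ and $(2,5)$ corollaries, one only needs the links to survive whatever induction produces the planar tours, and since the links sit in the corners this is cheap to arrange.

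\emph{Rim colouring.} The first genuine step is to partition a straight band of some width $w = w(a,b)$ into cycles, each a closed $(a,b)$ knight's walk, exactly as width~$6$ was $12$-coloured for the $(2,3)$ knight and the $20 \times 10$ brick was split into $16$ cycles for the $(2,5)$ knight. Since $\gcd(a,b)=1$ and $a,b$ are not both odd, a periodic colouring of the infinite band should exist by a lattice argument: pick rectangular translation blocks adapted to $(a,b)$, observe that the in-band moves permute the block-positions, and take the cycles of that permutation, closed up using the period of the band, as the colours. The delicate point is the \emph{corner}: one must design a colouring of a $w \times w$ corner square in which every colour entering along one side leaves along the adjacent side, and for which the induced corner permutation $f$ satisfies the consistency condition $f^{4} = \mathrm{id}$ (or the parity-twisted version forced when the band length makes block-positions swap, as in the $n \equiv 0 \pmod 4$ case for $(2,3)$). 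These corner patterns were hand-built in Figures~\ref{g12}--\ref{g14}; proving that a suitable one \emph{always} exists, uniformly in $(a,b)$, is the heart of the matter.

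\emph{Induction and base cases.} Granted such a rim colouring, the analogue of Lemma~\ref{l10}, or of the rim-to-middle concatenation in Lemma~\ref{l1}, builds a tour in $[n+2w]^2$ from one in $[n]^2$: place the given tour in the middle, colour the rim, and sequentially concatenate the rim cycles with the middle using bridges supplied by corner edges, all the while keeping link~$\alpha$ and link~$\beta$ intact. This disposes of every residue class of $n$ modulo $2w$ once one base tour per class is available, and for those one would want a uniform construction rather than computer search --- for instance gluing rectangular $(a,b)$ tours by Lemmas~\ref{l12}--\ref{l13}, which reduces the whole conjecture to building $(a,b)$ knight's tours with link~H and link~V on a bounded family of small rectangles whose two side lengths have greatest common divisor $2$.

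\emph{Main obstacle.} I expect the corner design in the rim-colouring step, together with the small-rectangle base cases, to be the real difficulty: both are ``small but unbounded'' combinatorial design problems, and the patterns that work for $a,b \leq 5$ need not persist. The paper's own remarks are a warning --- already for the $(a,1)$ knight with larger $a$ there are colour-cycles admitting no special move, so the naive cover-each-level strategy breaks, and the $(2,5)$ construction had to differ completely from the $(2,3)$ one. A proof in full generality would most likely require a new invariant controlling how the colour-cycles propagate around a corner, or a decomposition of the rim that bypasses delicate corner bookkeeping altogether.
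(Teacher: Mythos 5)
The statement you were asked about is not a theorem of the paper at all: it is Conjecture~\ref{c11}, attributed to Erde, Gol\'enia and Gol\'enia, and the paper offers no proof of it --- it appears in the ``Further research'' section precisely because it is open. Your text, to your credit, does not pretend otherwise: it is a programme, not a proof. But judged as a proof attempt it has a genuine and decisive gap, and the gap is exactly where you say ``the heart of the matter'' lies. The dimension-reduction step is fine --- Lemma~\ref{l9} really does reduce everything to producing a linked $(a,b)$ tour in $[n]^2$ for all large even $n$, and that is how the paper handles the $(2,3)$ and $(2,5)$ cases. Everything after that, however, is conditional: you assume (i) that a band of some width $w(a,b)$ admits a periodic partition into closed $(a,b)$ walks, (ii) that a $w\times w$ corner pattern exists whose induced permutation satisfies $f^4=\mathrm{id}$ (or its parity-twisted variant), and (iii) that a bounded family of small-rectangle base tours with link~H and link~V can be built uniformly in $(a,b)$. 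None of these is established, and the paper's own experience suggests they do not follow from any routine argument: the $(2,5)$ knight already forced a construction ``completely different'' from the $(2,3)$ one, the corner colourings in Figures~\ref{g12}--\ref{g14} were found by hand, the base cases were found by computer search, and for the $(a,1)$ knight with larger $a$ the level-by-level strategy provably fails for most board sizes because some levels admit no special move.

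So the verdict is: there is no proof here to compare with the paper's, because the paper has none and yours stops at the same obstructions the authors identify. If you want to push this programme, the concrete open sub-problems your sketch generates are worth isolating as statements in their own right: (a) existence, for every admissible $(a,b)$, of a periodic cycle-partition of the infinite band of some explicit width; (b) existence of a corner square realising a permutation $f$ with the required consistency condition, with a proof that does not degenerate into case analysis per $(a,b)$; and (c) a construction of linked tours on a bounded set of rectangles with coprimality-$2$ side lengths, replacing computer search. A new invariant controlling how colours propagate around a corner, as you suggest at the end, is plausibly the right shape of missing idea --- but until one of (a)--(c) is actually proved, the conjecture remains exactly as open as the paper leaves it.
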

				
				The question of necessary conditions for existence of an $(a, 1)$ knight's tour for even $a$ also remains open. An easy colouring argument can be used to show that the smallest connected knight's graph, on the board $[2a]^2$, is not Hamiltonian.
				
				We could try using the method from Section 2.2 to construct a knight's tour in $[4a+2]^2$ or even $[2a+2]^2$. Namely, choices have been made in constructing the guide and its implementation. It might be possible to modify our algorithm to work for $[4a+2]^2$ or even $[2a+2]^2$. The simplest two knights, (2, 1) and (4, 1), confirm this conjecture.
				
				However, new approaches are needed for constructing $(a, 1)$ knight's tours in $[n]^2$, for $2a+2 \leq n \leq (6a+2)a$.
	
	\appendix
	\section{Hamiltonian paths in grid graphs}
		Recall that the moves of an $(a, 1)$ knight constrained to one level are now reduced to just horizontal and vertical steps from one block to its neighbour. Thus in traversing each level (moves A1 - A3), we assumed the existence of certain Hamiltonian paths in $7 \times 7$, $7 \times 6$, $6 \times 6$. We now exhibit those paths.
		
		To avoid new notation, we still regard the vertices of our grid graph as blocks $B_{ij}$, and two blocks are adjacent if they share a side. We denote T$= \{B_{ij}: i, j \in \{2, 3\} \}$, T1 $= \{B_{ij}: i, j \in \{1, 2, 3, 4\} \} \setminus $T.
		\begin{lemma*} The grid graph with vertex set $\{ B_{ij}: i, j \in [7]\}$ admits a Hamiltonian path from any gray block in T1 to any gray block in T.
		\end{lemma*}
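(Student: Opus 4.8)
The plan is to reduce to a short finite check and then to exhibit the required paths explicitly, in the spirit of the figures in the preceding sections. First I would use symmetry: the reflection $\sigma\colon(i,j)\mapsto(j,i)$ across the main diagonal preserves the board $[7]^2$, maps $T$ onto $T$ and $T1$ onto $T1$, and preserves the shading, so it suffices to realise one representative from each orbit of admissible endpoint pairs. The gray blocks of $T1$ split into the four $\sigma$-orbits $\{B_{11}\}$, $\{B_{44}\}$, $\{B_{13},B_{31}\}$, $\{B_{24},B_{42}\}$, and the gray blocks of $T$ into $\{B_{22}\}$ and $\{B_{33}\}$; thus only eight source--target pairs $(s,t)$ remain to be treated.

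For each of these eight pairs I would construct the Hamiltonian path by a sweep-and-return argument. In the generic situation, where $s$ and $t$ are well separated, one takes a boustrophedon (snake) Hamiltonian path of the whole $7\times7$ grid and reroutes it locally in the $2\times2$ (or $2\times3$) windows around $s$ and $t$ so that the two prescribed blocks become the ends; the verification is then a direct check. A cleaner uniform reason that this always succeeds --- which also settles all eight cases at once --- is the classical characterisation of Hamiltonian paths in rectangular grid graphs of Itai, Papadimitriou and Szwarcfiter: for an $m\times n$ grid with $\min(m,n)\ge 4$, a Hamiltonian path between two prescribed vertices exists if and only if they are colour-compatible. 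Since $|[7]^2|=49$ is odd, colour-compatibility here just means that both endpoints lie in the majority (gray) class, which is exactly our hypothesis. I would nonetheless record the eight paths explicitly, as they are short and the paper is otherwise self-contained.

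The one place where care is genuinely needed is when $s$ and $t$ are close together and near a corner: the pairs with $s=B_{11}$, or $s\in\{B_{13},B_{31}\}$, and $t=B_{22}$. There the corner block $B_{00}$ has degree $2$, so every Hamiltonian path must contain the sub-path $B_{01}\,B_{00}\,B_{10}$; and a naive snake started at $s$ that first mops up columns $0$ and $1$ becomes trapped at $B_{12}$, whose only unused neighbour is $t=B_{22}$, long before the rest of the board is covered. The remedy is to dispose of rows $0$ and $1$ together with the corner pocket first, weaving once into column $2$ so as not to seal column $0$ off, and to re-enter the vicinity of $t$ only at the very last step, through $B_{23}$; what remains is then a clean $7\times4$ block (rows $3$--$6$) through which a serpentine finishes the tour. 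I have checked that this yields a valid path for $s=B_{11}$, $t=B_{22}$, and the other close cases are routine variants. This corner-trap phenomenon is the main obstacle; the remaining pairs, where $s$ and $t$ are far apart, are pure bookkeeping.
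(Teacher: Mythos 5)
Your argument is correct, and its skeleton is the same as the paper's: the appendix proof also uses the reflection $(i,j)\mapsto(j,i)$, which fixes $B_{22}$ and $B_{33}$, to cut the twelve endpoint pairs down to eight (four source orbits $\{B_{11}\}$, $\{B_{44}\}$, $\{B_{13},B_{31}\}$, $\{B_{24},B_{42}\}$ against the two targets), and then simply draws the eight Hamiltonian paths as figures. Where you genuinely diverge is in how those eight cases are discharged. Your explicit ``snake plus local rerouting'' constructions are only carried through in words for the single pair $(B_{11},B_{22})$, so as written that half of the proposal is a sketch rather than a proof; what actually closes the argument is your appeal to the Itai--Papadimitriou--Szwarcfiter characterisation. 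That citation is accurate as you state it: the forbidden configurations in their theorem all require one side of the grid to have length at most $3$, so for the $7\times 7$ grid colour-compatibility is sufficient, and since $49$ is odd this means precisely that both endpoints are gray --- which is the hypothesis of the lemma. This buys a uniform, case-free proof (and would equally dispose of the second appendix lemma for the $6\times 6$, $6\times 7$ and $7\times 6$ grids), at the cost of importing an external theorem into an otherwise self-contained and constructive appendix; note also that the knight's tour of Theorem \ref{t5} is ultimately assembled from these paths, so if one wants an explicit tour rather than bare existence, the eight figures still have to be produced. Your observation about the forced sub-path $B_{01}B_{00}B_{10}$ through the degree-$2$ corner is correct and is in fact the same remark the paper uses (in steps A1--A3) to guarantee the tour is structured, though it is not part of what this lemma asserts.
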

		
		\begin{proof}
			We start with paths finishing at $B_{22}$.
			\begin{center}
				\includegraphics[scale=.6]{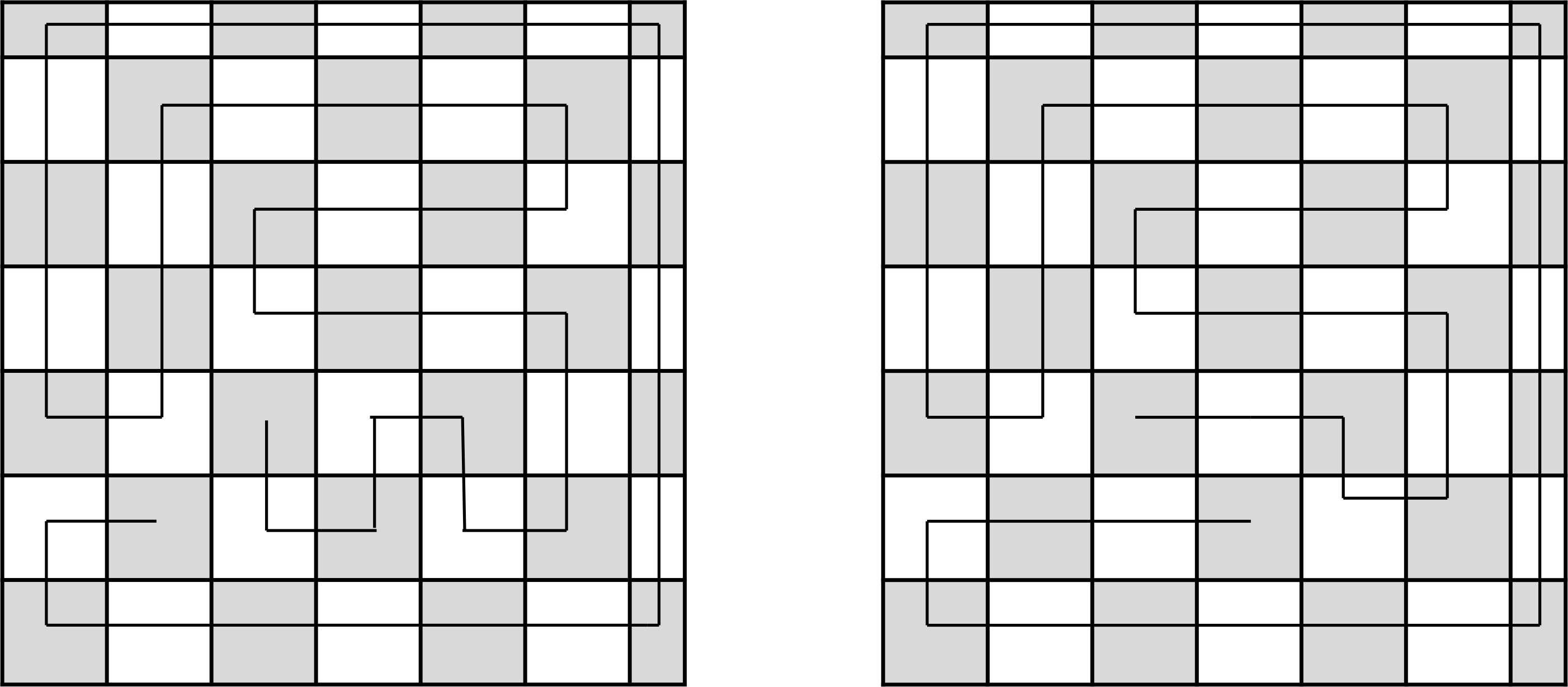}
				\vspace{10pt}
				\includegraphics[scale=.6]{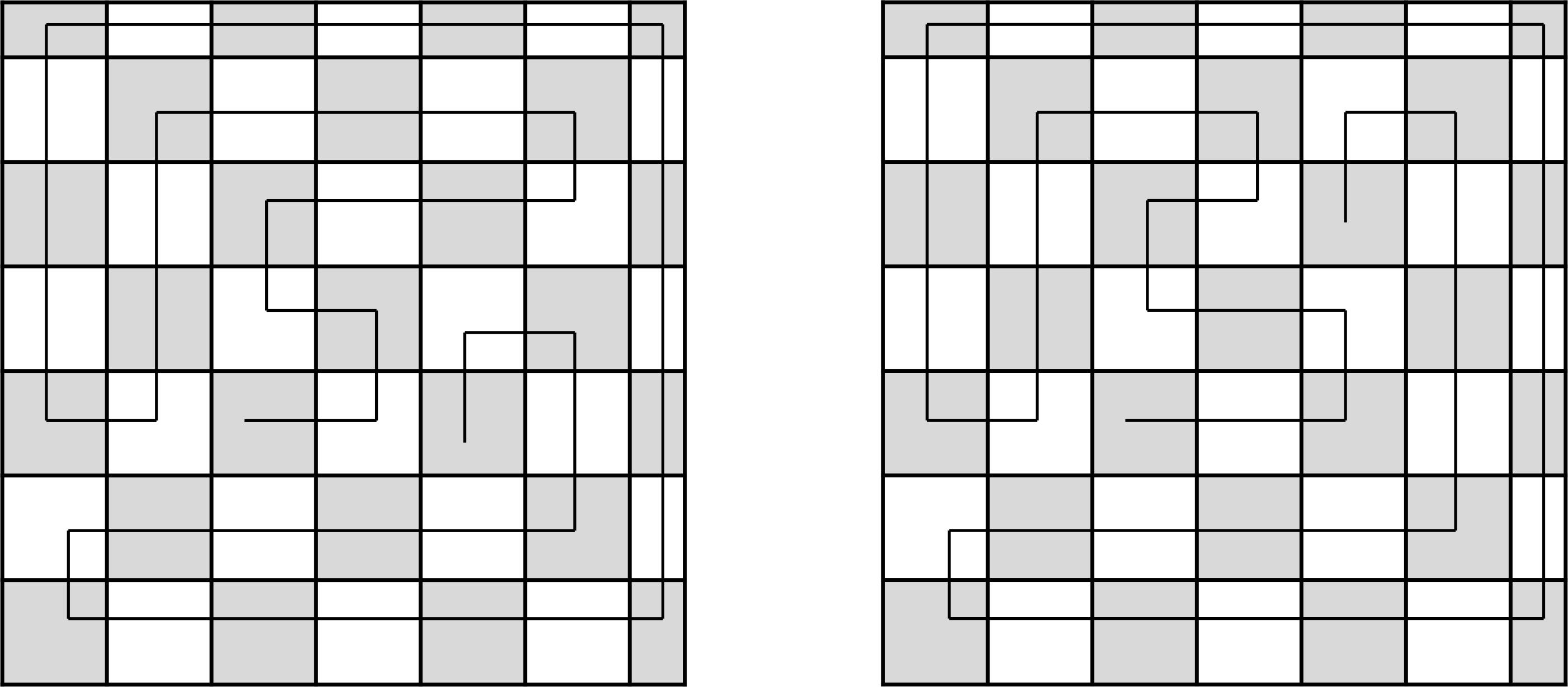}
			\end{center}
				Since $B_{22}$ is on the main diagonal, the paths starting at $B_{13}$ and $B_{24}$ are obtained using reflection across the main diagonal.
			
				By the same argument, it is sufficient to display the following paths finishing at $B_{33}$.
			
			\begin{center}
				\includegraphics[scale=.6]{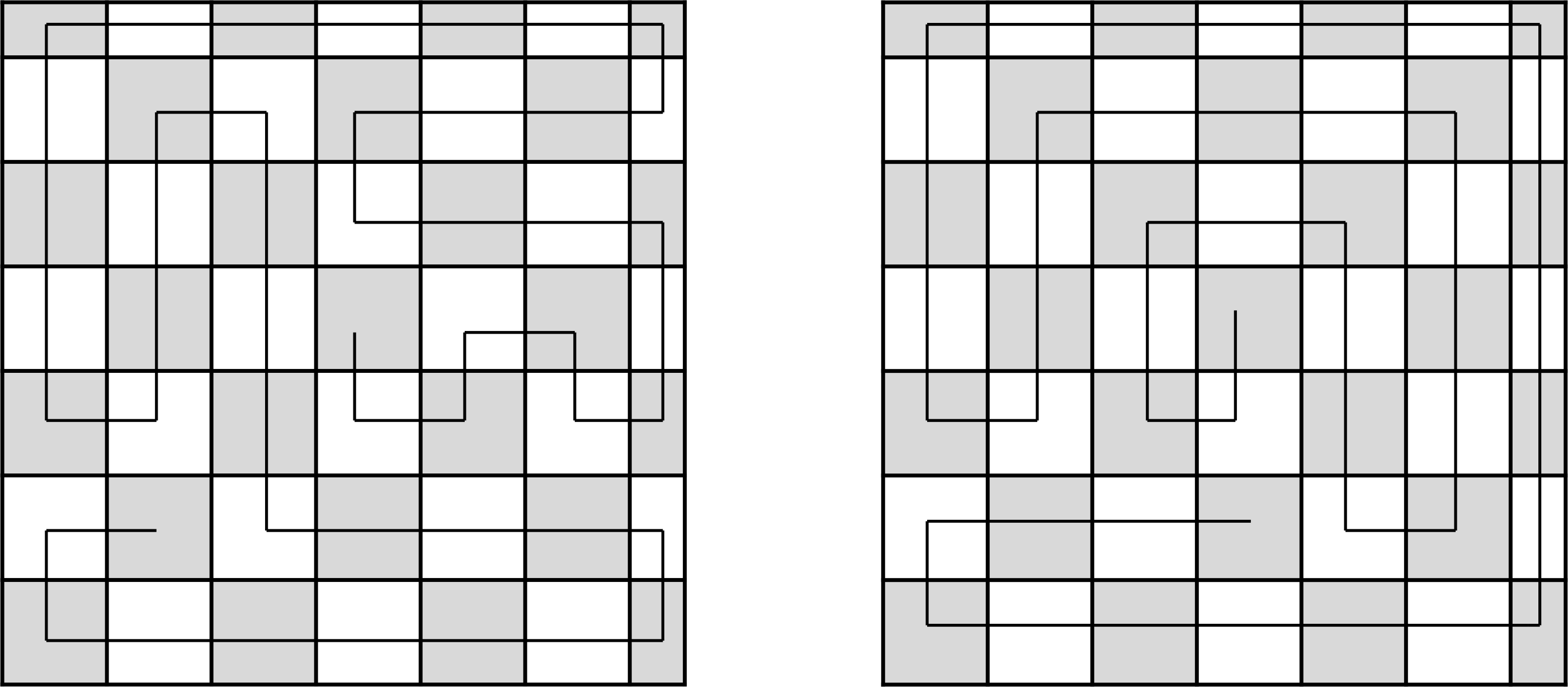}
				
				\vspace{10pt}
				
				\includegraphics[scale=.6]{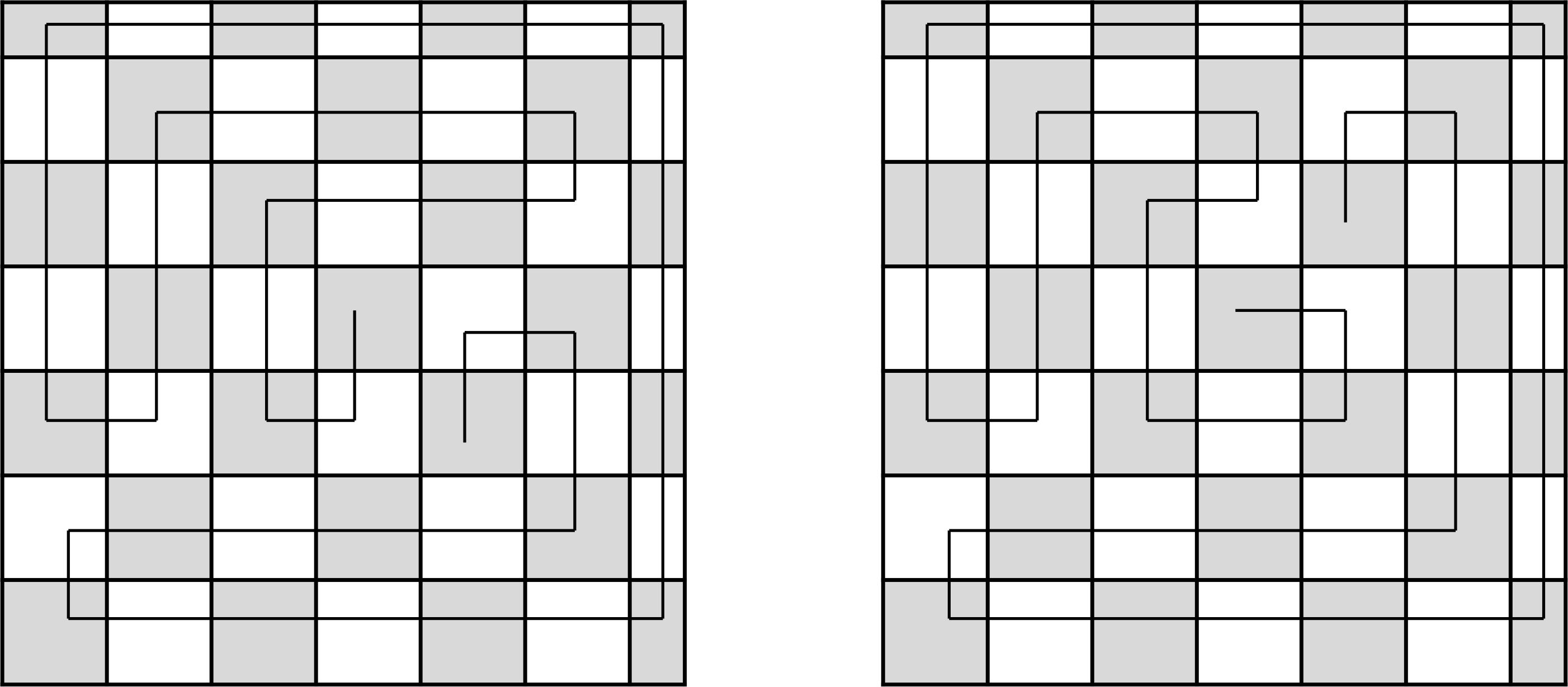}
			\end{center}
		\end{proof}
		
		\begin{lemma*} Let $H_{kl}$ be the grid graph with vertex set $\{ B_{ij}: (i, j) \in [k]\times [l] \}$.
		\vspace{-5pt} \begin{enumerate} \itemsep -1pt
			\item $H_{66}$ admits a Hamiltonian path from any gray block in T1 to any white block in T.
			\item $H_{66}$ admits a Hamiltonian path from any white block in T1 to any gray block in T.
			\item The same holds for $H_{67}$ and $H_{76}$.
		\end{enumerate}
		\end{lemma*}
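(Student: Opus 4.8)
The plan is to argue exactly as in the proof of the preceding $7\times 7$ lemma: exploit the automorphisms of each grid that preserve the regions $\mathrm{T}=\{B_{ij}:i,j\in\{2,3\}\}$ and $\mathrm{T1}=\{B_{ij}:i,j\in\{1,2,3,4\}\}\setminus\mathrm{T}$ in order to cut the list of ordered endpoint pairs down to a handful, and then exhibit one explicit Hamiltonian path (as a figure) for each remaining pair. Two observations drive the reduction. First, any such automorphism permutes the blocks, fixes $\mathrm{T}$ and $\mathrm{T1}$ setwise, and either preserves or reverses the chessboard $2$-colouring; a colour-reversing one therefore carries an instance of (i) to an instance of (ii), so as soon as a colour-reversing symmetry is available, parts (i) and (ii) become equivalent. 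Second, the transpose $(i,j)\mapsto(j,i)$ is an isomorphism $H_{67}\to H_{76}$ respecting $\mathrm{T}$, $\mathrm{T1}$ and the colouring, so in part (iii) it suffices to treat $H_{76}$.

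For $H_{66}$ the automorphism group is the dihedral group of order $8$, and every element fixes $\mathrm{T}$ and $\mathrm{T1}$ setwise because both regions are centred in $[6]^2$. The four colour-reversing elements (the two axis reflections and the two $90^\circ$ rotations) reduce (ii) to (i); the four colour-preserving elements form a Klein four-group, generated by the $180^\circ$ rotation $(i,j)\mapsto(5-i,5-j)$ and the main-diagonal reflection $(i,j)\mapsto(j,i)$. A direct check shows this group acts on the twelve ordered pairs consisting of a gray block of $\mathrm{T1}$ and a white block of $\mathrm{T}$ with exactly three orbits, represented for instance by $(B_{11},B_{23})$, $(B_{13},B_{23})$ and $(B_{13},B_{32})$. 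Hence part (i) for $H_{66}$ reduces to drawing three Hamiltonian paths, just as the $7\times 7$ lemma required four.

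For $H_{76}$ the board $[7]\times[6]$ is centred on $\mathrm{T}$ in the second coordinate only, so its only nontrivial $\mathrm{T}$-preserving automorphism is the colour-reversing reflection $(i,j)\mapsto(i,5-j)$; thus (ii) still reduces to (i), but inside (i) there is no further symmetry, so in principle one must produce a Hamiltonian path for each of the twelve pairs (gray block of $\mathrm{T1}$, white block of $\mathrm{T}$). Several of these can be avoided by a boundary-absorption trick: if a Hamiltonian path of $H_{66}$ contains the full last column $B_{5,0}B_{5,1}B_{5,2}B_{5,3}B_{5,4}B_{5,5}$ as a consecutive subpath, replacing that subpath by the ``double comb'' $B_{5,0}B_{6,0}B_{6,1}B_{5,1}B_{5,2}B_{6,2}B_{6,3}B_{5,3}B_{5,4}B_{6,4}B_{6,5}B_{5,5}$ yields a Hamiltonian path of $H_{76}$ with the \emph{same} endpoints (these lie in columns $i\le 4$ and are untouched), since the comb sweeps both columns $i=5$ and $i=6$. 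So whenever the $H_{66}$ path chosen for a given pair can be taken to sweep its last column, the corresponding $H_{76}$ path comes for free, and the rest are done by hand; then $H_{67}$ is recovered from $H_{76}$ by transposing, and parts (ii) of both follow from the colour-reversing reflections.

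The main obstacle is organisational rather than conceptual: one must actually produce the handful of explicit paths and be certain the case list is exhaustive. Concretely, the care goes into (a) verifying that the claimed symmetries really do stabilise $\mathrm{T}$, $\mathrm{T1}$ and the two colour classes and act with the stated orbits, and (b), where the absorption trick is invoked, checking that the base $H_{66}$ paths can be chosen so as to sweep the adjacent boundary column. If one preferred to dispense with the case analysis altogether, the existence of all of these Hamiltonian paths also follows from the Itai--Papadimitriou--Szwarcfiter characterisation of Hamiltonian-connected rectangular grid graphs, since every board here is large enough and the prescribed endpoints always have opposite colour and lie away from the corners; but giving explicit paths keeps the appendix self-contained.
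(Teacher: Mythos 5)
Your proposal is correct and follows essentially the same route as the paper: reduce the ordered endpoint pairs by the T- and T1-preserving symmetries of each grid, exhibit explicit Hamiltonian paths for the surviving representatives, and obtain the $6\times 7$ and $7\times 6$ cases by absorbing the extra row or column into an $H_{66}$ path. Your orbit computation checks out (the colour-preserving Klein four-group does act with exactly three orbits on the pairs of part (i), matching the paper's three displayed paths into $B_{32}$), and two of your refinements genuinely improve on the paper: deducing (ii) from (i) via the colour-reversing reflection $(i,j)\mapsto(i,5-j)$, where the paper instead draws a second family of explicit paths, and the Itai--Papadimitriou--Szwarcfiter fallback, which really does settle the whole lemma without figures since every board here has both sides at least $4$ and the prescribed endpoints are colour-compatible. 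The one point to watch is the absorption step for $H_{76}$: the twelve base $H_{66}$ paths are generated from three representatives by symmetries that carry column $5$ to other boundary lines, so ``sweeps its last column'' is not automatically inherited, and each pair still requires the finite check you acknowledge deferring --- the same check the paper elides with ``easy to extend by a single row or column.''
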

		
		\begin{proof}
			We only need to exhibit three paths finishing at $B_{32}$ - the remaining three are constructed by reflecting across the antidiagonal.
			\begin{center}
				\includegraphics[scale=.6]{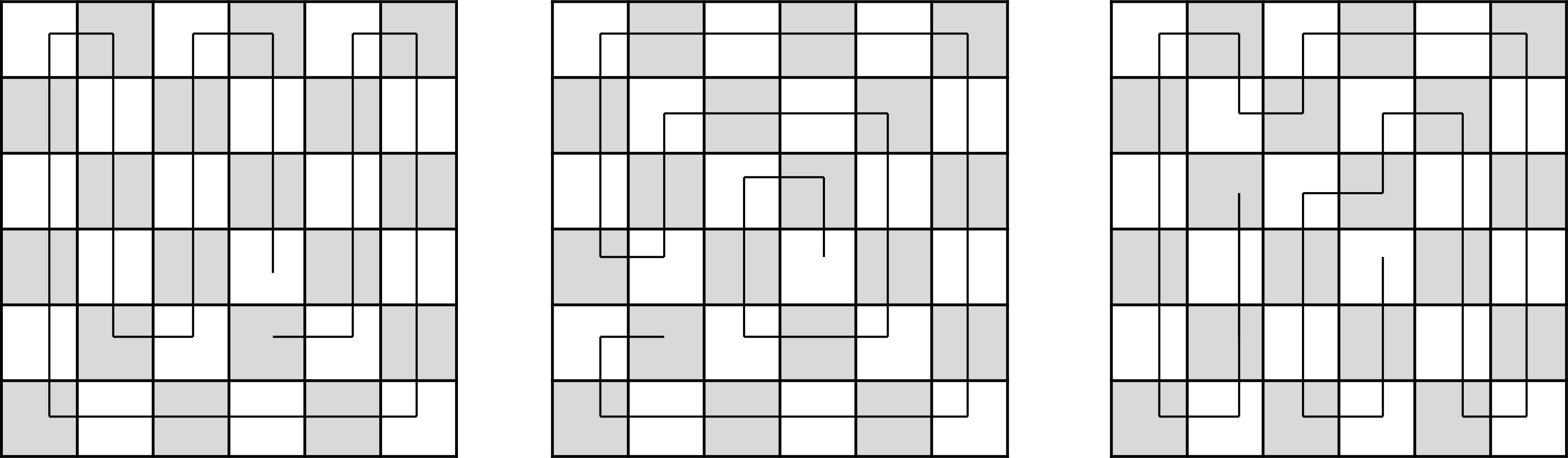}
			\end{center}
			Paths finishing at $B_{23}$ are obtained by reflecting across the main diagonal, which proves (i).
			
			For (ii), we use the following Hamiltonian paths and reflect them across the main diagonal to cover all the possible starting points (white blocks in T1), since $B_{22}$ is on the main diagonal. Reflection across the main diagonal to switch the finish point to $B_{33}$.
			\begin{center}
				\includegraphics[scale=.6]{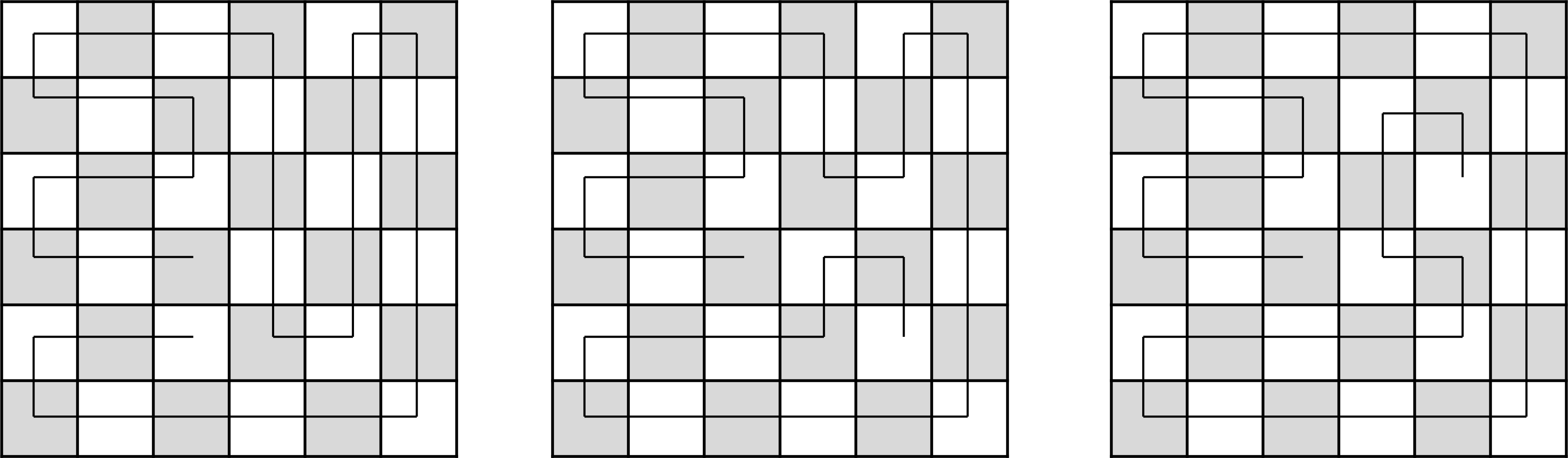}
			\end{center}
			
			Finally, the paths we constructed are easy to extend by a single row or column in any direction, so (iii) holds.
				Since $B_{22}$ is on the main diagonal, the paths starting at $B_{13}$ and $B_{24}$ are obtained using reflection across the main diagonal.
				
		\end{proof}
	\section*{Acknowledgements}
		I owe a debt of gratitude to Prof. Imre Leader for some very useful conversations.
		
		Financial support was provided by Trinity College, Cambridge, for which I am also very grateful.

\end{document}